\newcommand{\R}{\mathbb{R}}
\newcommand{\Rmn}{\mathbb{R}^{m\times n}}
\newcommand{\ff}{_{\mathrm{F}}}  % F-norm
\newcommand{\fs}{^2_{\mathrm{F}}}
\newcommand{\half}{\frac{1}{2}}
\newcommand{\st}{\mathrm{s.\,t.}\,\,} 
\newcommand{\zz}{^{\top}} % matrix transpose
\newcommand{\manifold}{{\cal M}}
\newcommand{\Mk}{{\cal M}_{k}}
\newcommand{\Ms}{{\cal M}_{s}}
\newcommand{\Mlek}{{\cal M}_{\le k}}
\newcommand{\stiefel}{\mathrm{St}}
\newcommand{\TX}{{\mathrm{T}_{X}}}
\newcommand{\projec}{\mathcal{P}}
\newcommand{\NXleks}{\dkh{\TX\Ms}_{\le (k-s)}^{\perp}}
\DeclareMathOperator*{\argmin}{arg\,min}
\DeclareMathOperator*{\argmax}{arg\,max}
\DeclareMathOperator*{\diag}{diag}
\DeclareMathOperator*{\rank}{rank}
\DeclareMathOperator*{\tr}{tr}
\DeclareMathOperator*{\po}{P_{\Omega}}
\newcommand{\abs}[1]{\left|#1\right|}
\newcommand{\dkh}[1]{\left(#1\right)}
\newcommand{\fkh}[1]{\left[#1\right]}
\newcommand{\hkh}[1]{\left\{#1\right\}}
\newcommand{\jkh}[1]{\left\langle#1\right\rangle}
\newcommand{\norm}[1]{\left\|#1\right\|}
\newcommand{\rgrads}[1]{\mathrm{grad}_s f(#1)}
\definecolor{Gray}{rgb}{0.5,0.5,0.5}
\definecolor{myred}{rgb}{0.7,0,0.1}
\definecolor{mygreen}{rgb}{0,0.6,0.2}
\definecolor{myblue}{rgb}{0.5,0,1}
\begin{document}

\title{A Riemannian rank-adaptive method for low-rank matrix completion\thanks{This work was supported by the Fonds de la Recherche Scientifique -- FNRS and the Fonds Wetenschappelijk Onderzoek -- Vlaanderen under EOS Project no. 30468160.}
}
%\subtitle{}

%\titlerunning{}        % if too long for running head

\author{Bin Gao         \and
        P.-A. Absil %etc.
}

\authorrunning{B. Gao, P.-A. Absil} % if too long for running head

\institute{Bin Gao and P.-A. Absil \at
              ICTEAM Institute, UCLouvain, 1348 Louvain-la-Neuve, Belgium \\
              \email{gaobin@lsec.cc.ac.cn; pa.absil@uclouvain.be}           %  \\
%             \emph{Present address:} of F. Author  %  if needed
%           \and
%           S. Author \at
%              second address
}

\date{Received: date / Accepted: date}
% The correct dates will be entered by the editor

\maketitle

\begin{abstract}
The low-rank matrix completion problem can be solved by Riemannian optimization on a fixed-rank manifold. However, a drawback of the known approaches is that the rank parameter has to be fixed a priori. In this paper, we consider the optimization problem on the set of bounded-rank matrices. We propose a Riemannian rank-adaptive method, which consists of fixed-rank optimization, rank increase step and rank reduction step.  We explore its performance applied to the low-rank matrix completion problem. Numerical experiments on synthetic and real-world datasets illustrate that the proposed rank-adaptive method compares favorably with  state-of-the-art algorithms. In addition, it shows that one can incorporate each aspect of this rank-adaptive framework separately into existing algorithms for the purpose of improving performance.
\keywords{Rank-adaptive \and Fixed-rank manifold \and Bounded-rank matrices \and Riemannian optimization \and Low-rank \and Matrix completion}
% \PACS{PACS code1 \and PACS code2 \and more}
% \subclass{MSC code1 \and MSC code2 \and more}
\end{abstract}

\section{Introduction}\label{sec:intro}

The low-rank matrix completion problems have been extensively studied in recent years; see the survey~\cite{nguyen2019low}. The matrix completion model based on a Frobenius norm minimization over the manifold of fixed-rank matrices is formulated as follows,
\begin{equation}\label{prob:fixed-rank}
\begin{aligned}
\min ~&~ f(X):=\half\norm{\po(X)-\po(A)}\fs\\
\st & X\in\Mk:=\{X\in\Rmn: \rank(X)=k\},
\end{aligned}
\end{equation}
where $A\in\Rmn$ is a data matrix only known on a subset $\Omega\subset \{1,\dots,m\}\times\{1,\dots,n\}$, $k\le\min(m,n)$ is a given rank parameter and $\po:\Rmn\to\Rmn$ denotes the projection onto $\Omega$, i.e., $\fkh{\po(X)}_{i,j}=X_{i,j}$ if $(i,j)\in\Omega$, otherwise $\fkh{\po(X)}_{i,j}=0$. When the rank of data matrix~$A$ is known or prior estimated, this model has been shown to be effective for low-rank matrix completion; see~\cite{meyer2011linear,vandereycken2013low}. However, the choice of the rank parameter $k$ affects the speed of the methods that address~\eqref{prob:fixed-rank} as well as the quality of the returned completion. In practice, if $\rank(A)\ge k$, solving problem~\eqref{prob:fixed-rank} returns a low-rank solution on $\Mk$. When $\rank(A)<k$, it is advisable to replace the constraint of problem~\eqref{prob:fixed-rank} ($X\in\Mk$) by $X\in\Ms$ for $s<k$. Considering the inconvenience of rank estimation of $A$ in general, a rank-adaptive algorithm with a flexible choice of rank parameter~$k$ is desirable. 

We consider the following model for  low-rank matrix completion:
\begin{equation}\label{prob:bounded-rank}
\begin{aligned}
\min~ & ~f(X)\\
\st & X\in\Mlek:=\{X\in\Rmn: \rank(X)\le k\}.
\end{aligned}
\end{equation}
%The feasible set $\Mlek$ is the set of matrices of rank at most $k$. 
Several algorithms based on Riemannian optimization (e.g., see~\cite{absil2009optimization}) for this problem have been developed in~\cite{bigmatrixrecovery,Uschmajew_V:2015,schneider2015convergence}. Recently, a Riemannian rank-adaptive method for low-rank optimization has been proposed in \cite{Zhou2016riemannian}, and problem~\eqref{prob:bounded-rank} can be viewed as a specific application. This rank-adaptive algorithm mainly consists of two steps: Riemannian optimization on the fixed-rank manifold and adaptive update of the rank. When a nearly rank-deficient point is detected, the algorithm reduces the rank to save computational cost.  Alternatively, it increases the rank to gain accuracy. However, there are several parameters that users need to tune.

In this paper, we propose a new Riemannian rank-adaptive method (RRAM). In comparison with the RRAM method in~\cite{Zhou2016riemannian}, we stray from convergence analysis concerns in order to focus on the efficiency of the proposed method for low-rank matrix completion. Specifically, the contributions are as follows.
\begin{itemize}
	\item We adopt a Riemannian gradient method with non-monotone line search and Barzilai--Borwein step size to solve the optimization problem on the fixed-rank manifold.
	\item By detecting the significant gap of singular values of iterates, we propose a novel rank reduction strategy such that the fixed-rank problem can be restricted to a dominant subspace. In addition, we propose a normal correction strategy to increase the rank. Note that the existing algorithms may benefit from these rank-adaptive mechanisms to improve their numerical performance. %\remove{ when the optimality of problem~\eqref{prob:bounded-rank} is not achieved}
	\item We demonstrate the effectiveness of the proposed method applied to low-rank matrix completion. The numerical experiments on synthetic and real-world datasets illustrate that the proposed rank-adaptive method is able to find the ground-truth rank and compares favorably with other state-of-the-art algorithms in terms of time efficiency.
\end{itemize}

The value of the new rank-adaptive method is application dependent. However, our observations provide insight on the kind of data matrices for which rank-adaptive mechanisms play a valuable role. The main purpose of this paper is thus to explore the  numerical behaviors of rank-adaptive methods on synthetic and real-world problems.

%\bgrevise{In view of different strategies for rank increase and rank reduction, the effect of the new rank-adaptive method may be limited and highly depend on the concrete problem. However, our observations provide general insights that what kinds of data matrices one expects to encounter in practice such that rank-adaptive mechanisms can play a valuable role in improving the performance. The main purpose of this paper is thus to explore the  numerical behaviors of rank-adaptive methods on synthetic and real-world problems.}

The rest of paper is organized as follows. The next section introduces related work based on rank-update mechanisms, and presents necessary ingredients of the proposed method. In section~\ref{sec:algorithm}, a~new Riemannian rank-adaptive method is proposed and its implementation details are also provided. Numerical experiments are reported in section~\ref{sec:experiments}. The conclusion is drawn in section~\ref{sec:conclusion}. 

\section{Related work and preliminaries}\label{sec:preliminaries}
In this section, we start with related work and give the preliminaries regarding the geometric aspect.
\subsection{Related work}\label{subsec:related-work}
The feasible set $\Mk$ of problem~\eqref{prob:fixed-rank} is a smooth submanifold of dimension $(m+n-k)k$ embedded in $\R^{m\times n}$; see~\cite[Example 8.14]{lee2003introduction}. 
%\bgcomm{I didn't cite the references of Riemannian fixed-rank optimization since it may be not relevant to rank update, but only the following one that will be involved in the tests.} 
A Riemannian conjugate gradient (RCG) method for solving problem~\eqref{prob:fixed-rank} has been proposed in~\cite{vandereycken2013low}, which efficiently assembles ingredients of RCG by employing the low-rank structure of matrices.
There has been other methods for the fixed-rank optimization including the Riemannian trust-region method (RTR) \cite{trace_penalty} and the Riemannian gradient-descent method (RGD) \cite{Zhou2016riemannian}.
Mishra et al. \cite{trace_penalty} have considered a trace norm penalty model for low-rank matrix completion, and have proposed a method that alternately performs a fixed-rank optimization and a rank-one update.

However, $\Mk$ is not closed in $\R^{m\times n}$, hence $\min_{X\in\Mk}f(X)$ may not have a solution even when $f$ is continuous and coercive; moreover, if a Riemannian optimization algorithm has a limit point of rank less than $k$, then the classical convergence results in Riemannian optimization (e.g.,~\cite{BouAbsCar2018}) do not provide guarantees about the limit point.
%which may lead to a limit point of rank less than $k$ for an optimization algorithm. 
As~a~remedy, one can resort to the set of bounded rank matrices, i.e., $\Mlek$. 
%Note that its model of low-rank matrix completion is different from \eqref{prob:fixed-rank} and \eqref{prob:bounded-rank}. 
Recently, algorithms for solving problem~\eqref{prob:bounded-rank}, combining the fixed-rank Riemannian optimization with a rank-increase update, have been introduced in~\cite{bigmatrixrecovery,Uschmajew_V:2015}. Basically, these methods increase the rank with a constant by searching along the tangent cone of $\Mlek$ and projecting onto $\Mk$ or $\Mlek$. In addition, a general projected line-search method on $\Mlek$ has been developed in~\cite{schneider2015convergence} whose convergence guarantee is based on the assumption that limit points of algorithm have rank $k$. In summary, we list all these rank-related algorithms with their corresponding  features in Table~\ref{tab:rank-related}; a detailed explanation will be given in Remark~\ref{remark:tab} (page~\pageref{remark:tab}) after we introduce the necessary geometric ingredients for Riemannian optimization.
%see a detailed explanation in Remark~\ref{remark:tab}. 

\begin{table}[htbp]
	\centering
%	\footnotesize
	\caption{Rank-related algorithms based on the geometry of the feasible set\label{tab:rank-related}}
	\begin{tabular}{cclr}
		\toprule
		\multirow{2}{*}{{\sc algorithm}} & \multirow{2}{*}{\sc fixed-rank} & \multicolumn{2}{c}{{\sc rank}}\\\cmidrule(r){3-4}
		& & {\sc increase} & {\sc reduction} \\\midrule 
		\cite{trace_penalty} & RTR & $X_+ = X +\alpha wy\zz$& -\\
		\cite{bigmatrixrecovery} & RCG & $X_+ = \projec_{\mathcal{M}_{s+l}}(X +\alpha G_{\le (s+l)}) $ & -\\
		\cite{Uschmajew_V:2015} & RCG& $X_+ = X +\alpha G_{\le (s+l)}$ & -\\
		\cite{schneider2015convergence} & -& {$X_+ = \projec_{\Mlek}(X +\alpha G_{\le k}) $} & -\\
		\cite{Zhou2016riemannian} & RGD &  $X_+ = \projec_{\mathcal{M}_{\le (s+l^*)}}(X +\alpha G_{\le (s+l^*)}) $ & $\sigma_i/\sigma_1 < \Delta$\\ % $X_+ = \argmin_{X\in\mathcal{M}_{s+l}} f(X)$
		Ours & RBB & $X_+ = X +\alpha WDY\zz$ & ${(\sigma_i  - \sigma_{i+1})}/{\sigma_i}  \!>\! \Delta$\\
		\bottomrule
	\end{tabular}
\end{table}

\subsection{Geometry of $\Mlek$}
The geometry of $\Mlek$ has been well studied in~\cite{schneider2015convergence}. In this subsection, we introduce several Riemannian aspects that will be used in the rank-adaptive method.

Given the singular value decomposition (SVD) of fixed-rank matrices, an equivalent expression of the manifold $\Mk$ is
\begin{equation*}
\Mk = \left\{ U\Sigma V\zz:
\begin{array}{c}
U\in\stiefel(k,m), V\in\stiefel(k,n),\\
\Sigma=\diag(\sigma_1,\dots,\sigma_k) ~\mbox{with}~ \sigma_1\ge\cdots\ge\sigma_k>0
\end{array}
\right\},
\end{equation*}
where %\bgrevise{$\sigma_1\ge\cdots\ge\sigma_k>0$ are singular values,} 
$$\stiefel(k,m):=\hkh{X\in\R^{m\times k}: X\zz X=I_k}$$ denotes the (compact) Stiefel manifold, and a diagonal matrix with $\{\sigma_i\}$ in its diagonal is denoted by $\diag(\sigma_1,\dots,\sigma_k)$. This expression of $\Mk$ provides a convenient way to assemble other geometric tools. For instance, the tangent space of $\Ms$ at $X\in\Ms$ is given as follows; see~\cite[Proposition 2.1]{vandereycken2013low}
\begin{equation*}
\TX\Ms = \hkh{\begin{bmatrix}
	U &U_\perp
	\end{bmatrix} \begin{bmatrix}
	\R^{s\times s} & \R^{s\times (n-s)}\\
	\R^{(m-s)\times s} & 0_{(m-s)\times (n-s)} 
	\end{bmatrix} \begin{bmatrix}
	V &V_\perp
	\end{bmatrix}\zz },
\end{equation*}
where $U_\perp\in\R^{m\times(m-s)}$ denotes a matrix such that $U\zz U_\perp = 0$ and $U_\perp\zz U^{}_\perp =I$. Moreover, the normal space of $\Ms$ at $X$ associated with the Frobenius inner product, $\jkh{X,Y}:=\tr(X\zz Y)$, has the following form
\begin{equation}\label{eq:normal-space}
\dkh{\TX\Ms}^{\perp} = \hkh{\begin{bmatrix}
	U &U_\perp
	\end{bmatrix} \begin{bmatrix}
	0_{s\times s} & 0_{s\times (n-s)}\\
	0_{(m-s)\times s} & \R^{(m-s)\times (n-s)} 
	\end{bmatrix} \begin{bmatrix}
	V &V_\perp
	\end{bmatrix}\zz }.
\end{equation}

Letting $P_U:=UU\zz$ and $P^{\perp}_U:=U^{}_\perp U_\perp\zz = I-P_U$, the orthogonal projections onto the tangent space and normal space at $X$ for $Y\in\R^{m\times n}$ are
\begin{align}\nonumber
\projec_{\TX\Ms} (Y) &= P_U Y P_V + P^{\perp}_U Y P_V + P_U Y P^{\perp}_V,\\
\projec_{\dkh{\TX\Ms}^{\perp}} (Y) &= P^{\perp}_U Y P^{\perp}_V. \label{eq:normal-proj}
\end{align}

The Riemannian gradient of $f$ at $X\in\Ms$, denoted by $\rgrads{X}$, is defined as the unique element in $\TX\Ms$ such that $\jkh{\rgrads{X}, Z}=\mathrm{D}{f}(X)[Z]$ for all $Z\in\TX\Ms$, where 
%$\bar{f}$ is a~smooth extension of $f$ around $X$ in $\Rnnpp$, and 
$\mathrm{D}{f}(X)$ denotes the Fr\'echet derivative of ${f}$ at $X$. It readily follows from~\cite[(3.37)]{absil2009optimization} that
\begin{equation*}
\rgrads{X} = \projec_{\TX\Ms} (\nabla f(X)),
\end{equation*}
where $\nabla {f}(X)$ denotes the Euclidean gradient of ${f}$ at $X$.

When $s<k$, the tangent cone of $\Mlek$ at $X\in\Ms$ can be expressed as the orthogonal decomposition~\cite[Theorem 3.2]{schneider2015convergence}
\begin{equation}\label{eq:tangent-lek}
\TX\Mlek = \TX\Ms  \oplus \NXleks,
\end{equation}
where {$\oplus$ denotes the direct sum and}
\begin{equation*}
\NXleks:=\hkh{N\in\dkh{\TX\Ms}^{\perp}: \rank(N)\le (k-s)}
\end{equation*}
is a subset of the normal space $\dkh{\TX\Ms}^{\perp}$. Furthermore, the projection onto the tangent cone has the form \cite[Corollary 3.3]{schneider2015convergence} 
\begin{equation*}
\projec_{\TX\Mlek}(Y) \in \argmin_{Z\in\TX\Mlek} \norm{Y-Z}\ff  = \projec_{\TX\Ms} (Y)  + \projec_{\NXleks}(Y),%\bar{N}_{k-s}(Y)
\end{equation*}
where $\projec_{\NXleks}(Y)\in\NXleks$ is a best rank-($k-s$) approximation of  $\projec_{\dkh{\TX\Ms}^{\perp}} (Y)=Y-\projec_{{\TX\Ms}} (Y)$.
%	i.e., $\bar{N}_{k-s}(Y)\in\argmin_{\rank(N)\le (k-s)} \norm{\projec_{\dkh{\TX\Ms}^{\perp}} (Y)-N}\ff$. 
Note that this projection is not unique when the singular values number $k-s$ and $k-s+1$ of $\projec_{\dkh{\TX\Ms}^{\perp}}(Y)$ are equal.
%where $\bar{N}_{k-s}(Y)\in\argmin_{\rank(N)\le (k-s)} \norm{Y-\projec_{\TX\Ms} (Y)-N}\ff$. Note that this projection is not unique since $\bar{N}_{k-s}(Y)$ can be selected from any the best rank-$(k-s)$ approximation of $Y-\projec_{\TX\Ms} (Y)$, which is exactly the projection onto normal space, i.e., $\projec_{\dkh{\TX\Ms}^{\perp}} (Y)$. 
For simplicity, we denote 
\begin{equation*}\label{eq:normal-approx}
N_{k-s}(X):=\projec_{\NXleks}(-\nabla f(X)),
\end{equation*}
and the projections of $-\nabla f(X)$ are denoted by
\begin{align}\label{eq:Gs_define}
G_s(X)&:=\projec_{\TX\Ms}(-\nabla f(X))=-\rgrads{X},\\% \argmin_{Z\in\TX\Ms} \norm{-\nabla f(X)-Z}\ff
G_{\le k}(X)&:= 
\projec_{\TX\Mlek}(-\nabla f(X)) = 
G_s(X) + N_{k-s}(X). \nonumber %\label{eq:orth-decom}
\end{align}

Consequently, the optimality condition of problem~\eqref{prob:bounded-rank} can be defined as follows; see~\cite[Corollary 3.4]{schneider2015convergence}.
\begin{definition}\label{def:critical}
	$X^*\in\Mlek$ is called a \emph{critical point} of optimization problem~\eqref{prob:bounded-rank} if $$\norm{G_{\le k}(X)}\ff=0.$$
\end{definition}

\section{A Riemannian rank-adaptive method}\label{sec:algorithm}
We propose a new Riemannian rank-adaptive algorithm in this section. The Riemannian Barzilai--Borwein method with a non-monotone line search is proposed to solve the fixed-rank optimization problem. Rank increase and rank reduction strategies are developed.

\subsection{Algorithmic framework}
In view of Definition~\ref{def:critical}, a critical point $X^*\in\Mlek$ of problem~\eqref{prob:bounded-rank} satisfies 
\begin{equation}\label{eq:critical-two}
\norm{G_{\le k}(X^*)}\fs=\norm{G_s(X^*)}\fs + \norm{N_{k-s}(X^*)}\fs = 0,
\end{equation}
where the first equality follows from the orthogonal decomposition~\eqref{eq:tangent-lek} of $G_{\le k}(X^*)$. This enlightens us to develop a two-stage algorithm:
\begin{itemize}
	\item[1)] solve the optimization problem on $\Ms$, i.e.,
	\begin{equation}\label{prob:fixed-s}
	\min\limits_{X\in\Ms} f(X).
	\end{equation}
	Note that $$\norm{G_s(X)}\ff=\norm{\rgrads{X}}\ff=0$$ is the {{first-order}} optimality condition of~\eqref{prob:fixed-s};
	\item[2)] search along $N_{k-s}(X)\in\NXleks$.
	%the normal space due to $N_{k-s}(X)\in\dkh{\TX\Ms}^{\perp}$. 
\end{itemize}
%In view of~\eqref{eq:tangent-lek}, these two stages can be viewed as the alternating minimization between two subspaces. \bgcomm{This is just intuition, not exactly the alternating minimization. Perhaps, we should remove it?}

\begin{figure}[htbp]
	\centering 
	%	\scriptsize
	\tikzset{samescale/.style={
			scale=#1,
			every node/.append style={scale=#1}
		}
	} %  "samescale" is a new option for zooming fonts and nodes simultaneously 
	\begin{tikzpicture}[
	samescale=.9,
	auto,
	decision/.style = { diamond, aspect=2, draw=gray,
		thick, fill=gray!5, text width=5em, text badly centered,
		inner sep=1pt},
	block/.style = { rectangle, draw=gray, thick, fill=gray!5,
		text width=6em, text centered, rounded corners,
		minimum height=2em },
	line/.style = { draw, thick, ->, shorten >= 0.5pt},
	]
	
	\node [block, fill=cyan!20,
	text width=1.5em, text centered, minimum
	height=1.5em] at (-3.2,-3) (rank-increase) {};
	\node [right] at (-2.9,-3)  {fixed-rank optimization};
	
	\node [block, fill=red!20,
	text width=1.5em, text centered, minimum
	height=1.5em] at (-3.2,-3.5) (rank-increase) {};
	\node [right] at (-2.9,-3.5)  {rank increase};
	
	\node [block, fill=green!20,
	text width=1.5em, text centered, minimum
	height=1.5em] at (-3.2,-4) (rank-increase) {};
	\node [right] at (-2.9,-4)  {rank reduction};
	
	\node [block,rectangle, draw=gray, thick,dashed, fill=green!5,
	text width=22em, text centered, rounded corners, minimum
	height=6em] at (3.8,1.4) (rank-reduction) {};
	\node [gray!70] at (1.5,2.5)  {rank reduction};

	\node [block,rectangle, draw=gray, thick,dashed, fill=red!5,
	text width=22em, text centered, rounded corners, minimum
	height=11em] at (3.8,-2.1) (rank-increase) {};
	\node [gray!70] at (6.2,-3.85)  {rank increase};
	
	\node [block] at (-1.8,1.4) (initial) {initialization $X_0, \Delta$};
	
	\node [block,text width=10em,fill=green!20] at (-1.8,0) (initial-reduction) {rank reduction\\ $(\tilde{X}_{0},s) \leftarrow (X_{0},\Delta)$};
	
	\node [block,text width=10em,fill=cyan!20] at (3.8,0) (subproblem) {${X_{p}}\approx\argmin\limits_{X\in\Ms} f(X)$};
	
	\node [block,text width=10em,fill=green!20] at (3.8,1.4) (reduction) {rank reduction\\$(\tilde{X}_{p},s) \leftarrow (X_{p},\Delta)$};
	
	%	\node [decision] at (3.8,1.5) (decision1) {decrease $f$?};
	%	\node [above] at (5,1.5) {No};
	%	\node [right] at (3.8,0.7) {Yes};
	\node [decision, text width=7em] at (7.9,0) (decision2) {\vspace{-2 mm}\\detect large gap? \\$\hspace{-2.3mm}{{(\sigma_i  - \sigma_{i+1})}/{\sigma_i}  \!>\! \Delta }$};
	\node [right] at (7.9,1.2)  {Yes};
	\node [right] at (7.9,-1.2)  {No};
	\node [diamond, aspect=2, draw=gray,
	thick, fill=gray!5, text width=9.4em, text badly centered,
	inner sep=1pt] at (3.8,-4.8) (decision4) {\vspace{-4mm}\\increase rank?\\{$\norm{N_{k-s}}\ff > \epsilon\norm{G_s}\ff$}};
	\node [below] at (1.8,-4.8)  {No};
	\node [right] at (3.8,-3.8)  {Yes};
	
	\node [block,fill=red!20,text width=10em] at (3.8,-3.1) (increase) {set increase number $l$};

	\node [block,text width=20em, fill=red!20] at (3.8,-2.235) (increase1) {$WDY\zz \in \argmin\limits_{\rank(N)\le {l}} \norm{-\nabla f-G_s -N}\ff$};
	%	\node [above] at (6.2,-2.2)  {No};
	%	\node [right] at (3.9,-1.8)  {Yes};
	\node [block, text width=13em, fill=red!20] at (3.8,-1.2) (linesearch2) {line search  $(\tilde{X}_{p}, s\leftarrow s+\tilde{l})$  $\min_{\alpha> 0} f(X_p+\alpha WDY\zz)$};
	
	%	\node [block,fill=yellow!20] at (6.3,1.5) (linesearch1) {line search\\$\Delta\leftarrow \tau\Delta$};
	
	\begin{scope} [every path/.style=line,thick,shorten >= 0.5pt]
	\path (initial)    --   (initial-reduction);   
	\path (initial-reduction)      --   (subproblem);
	\path (reduction)		--		 (subproblem);
	%	\path (decision1)		--		 (linesearch1);
	%	\path (reduction)		--		 (decision1);
	\path (subproblem) -- (decision2);
	\path (decision2) -- (7.9,2.7) -- (3.8,2.7) -- (reduction);
	%	\path (linesearch1) -- (6.3,3) --  (reduction);
	\path (decision2) -- (7.9,-4.8) -- (decision4);
	\path (decision4) -- (increase);
	\path (increase) -- (increase1); 
	\path (increase1) -- (linesearch2);
	\path (linesearch2) -- (subproblem);
	%	\path (increase1) -- (6.5,-2.2) -- (6.5,-3.1) -- (increase);
	\path (decision4) -- (0.2,-4.8) -- (0.2,0);
	\end{scope}
	
	%		\draw [help lines] (0,0) grid (5,5);
	
	\end{tikzpicture}
	\caption{\label{fig:flowchart} Flowchart of the Riemannian rank-adaptive method (RRAM); see~\eqref{eq:initialization} for the initialization, Algorithm~\ref{alg:non-monotone gradient} for the fixed-rank optimization, Algorithm~\ref{alg:rank-increase} for the rank increase, and Algorithm~\ref{alg:rank-reduction} for the rank reduction, subsection~\ref{subsec:rank-increase-numerical} for the ``increase rank'' test, and subsection~\ref{subsec:rank-reduction-numerical} for the ``detect large gap'' test.}
\end{figure}

To this end, a globally convergent Riemannian optimization algorithm on $\Ms$ can be employed on the stage of fixed-rank optimization. After this, one can check the violation of optimality~\eqref{eq:critical-two}. If it is still large, which means the current rank $s$ cannot reflect the adequate information for problem~\eqref{prob:bounded-rank}, then we increase the rank by searching along the normal space. In Figure~\ref{fig:flowchart}, we draw the flowchart of the proposed rank-adaptive method (RRAM). There are three major parts in the framework: optimization on the fixed-rank manifold; rank increase; rank reduction. In the rest of this section, we address these three aspects.

\subsection{Riemannian optimization on $\Ms$}\label{subsec:fixed-rank}
Very recently, the Riemannian gradient method with non-monotone line search and Barzilai--Borwein  (BB) step size \cite{BB} has been shown {{to be}} efficient in various applications; see~\cite{iannazzo2018riemannian,hu2019brief,gao2020riemannian}. In addition, its global convergence has been established (e.g., see~\cite[Theorem 5.6]{gao2020riemannian}). We adopt this method to solve the problem \eqref{prob:fixed-s} in Figure~\ref{fig:flowchart}. Given the initial point $\tilde{X}_p\in\Ms$, the detailed method called RBB is listed in Algorithm~\ref{alg:non-monotone gradient}.

\begin{algorithm2e}[htbp]
	\caption{Riemannian gradient method with non-monotone line search and Barzilai--Borwein step size (RBB)}
	\label{alg:non-monotone gradient}
	%	\SetKwInOut{Input}{Input}\SetKwInOut{Output}{Input}
	%	\SetKwComment{Comment}{}{}
	%	\Linesnumbered
	%	\BlankLine %\dontprintsemicolon
	\textbf{Input:}  $X^{(0)}=\tilde{X}_p\in\Ms$. \\
	%11BG reorder "Input" and "Require"
	\textbf{Require:}  
	%	Continuously differentiable function $f:\manifold\to\R$; retraction $\mathcal{R}$ on $\manifold$ defined on $\dom(\mathcal{R})$;
	%08PA instead of "from $\dom(\mathcal{R})\subseteq\mathrm{T}\manifold$ to $\manifold$"; 
	%08PA-BGTODO check. Removed "$\epsilon,\rho>0$" because $\epsilon$ is not used here and $\rho$ is specific to the symplectic case.
	%09BG Checked; it's fine.
	$\beta, \delta\in(0,1)$, $\theta \in [0,1]$,
	%09BG instead of "[0,1)".
	$0<\gamma_\mathrm{min}<\gamma_\mathrm{max}$, 
	%09BG The above is new.
	$c_0 = f(X^{(0)})$, $q_0=1$, $\gamma_0>0$.
	%13BG First of all, it is actually $\gamma_0=10^{-3}$ in my code, so it is a typo here. On the other side, it does not make sense to set a value for \gamma_0 in view of "Choose a trial step size $\gamma_k>0$" (including the case of $k=0$) in the general algorithm. The original motivation of this is to make our BB stepsize and finite difference stepsize well-defined at the first iteration since both of them are computed based on two consecutive steps. However, we make a general framework here with a general step-size selection, so it is no longer needed. To this end, I move it into subsection: a comparison of different line search schemes.
	\\
	\For{$j=0,1,2,\dots,j_{\max}$}
	{
		1: Choose $Z^{(j)}=-\rgrads{X^{(j)}}$. % \label{alg:nmg:grad}
		%Compute $Z^{(j)}=-\rgrad{X^{(j)}}$ by \eqref{eq:rgrad-parent}.
		
		2: Choose a trial step size $\gamma_j= \frac{\jkh{S^{(j-1)},S^{(j-1)}}}{\abs{\jkh{S^{(j-1)},{K^{(j-1)}}}}}$ for odd $j$, otherwise $\frac{\abs{\jkh{S^{(j-1)},K^{(j-1)}}}}{{\jkh{K^{(j-1)},{K^{(j-1)}}}}}$,
		where 
		$S^{(j-1)} = t_{j-1} \mathcal{T}_{X^{(j-1)}\to X^{(j)}}(Z^{(j-1)})$, $K^{(j-1)} =\mathcal{T}_{X^{(j-1)}\to X^{(j)}}(Z^{(j-1)}) - Z^{(j)}$.
		%09BG instead of "Calculate $\gamma_j$ {as in} \eqref{eq:ABB} "
		{Set $\gamma_j=\max(\gamma_\mathrm{min},\min(\gamma_j,\gamma_\mathrm{max}))$}. 
		
		3: Find the smallest integer $h$ such that the non-monotone condition
		\begin{equation*}\label{eq:non-monotone}
		f\dkh{\projec_{\Ms}(X^{(j)}+\gamma_j \delta^h Z^{(j)})} \le c_j + \beta \gamma_j \delta^h \jkh{\rgrads{X^{(j)}},Z^{(j)}}
		\end{equation*}
		holds. 
		%06PA I have changed the order of the two conditions above, because the non-monotone condition is meaningless if the domain condition does not hold. In fact, we could remove the domain condition; it is implicit in the non-monotone condition. However I'm in favor of keeping it because it makes it clear that the algorithm handles locally-defined retractions.  
		Set $t_j=\gamma_j \delta^h$.  %\label{alg:nmg:gamma}
		
		4: Set $X^{(j+1)} = \projec_{\Ms}(X^{(j)}+t_j Z^{(j)})$.
		%		{\cal R}_{X^{(j)}}(t_j Z^{(j)})$.
		
		5: Update %$q_{j+1}$ and $c_{j+1}$ 
		\begin{align*}\label{eq:non-monotone-CQ}
		%		\begin{array}{l}
		q_{j+1} &= \theta q_{j} + 1,\\
		c_{j+1} &= \frac{\theta q_{j}}{q_{j+1}} c_{j}  + \frac{1}{q_{j+1}} f(X^{(j+1)}).
		%		\end{array}
		\end{align*}
	}
	\textbf{Output:} Final iterate $X_p=X^{(j_{0})}$. %Sequence of iterates $\{X^{(j)}\}$.
\end{algorithm2e}

In step~2, we calculate a step size based on the Riemannian BB method \cite{iannazzo2018riemannian}, and the vector transport on $\Ms$ is defined as 
\begin{equation*}
\mathcal{T}_{X\to Y}: \mathrm{T}_X\Ms\to\mathrm{T}_Y\Ms, Z\mapsto \mathcal{P}_{\mathrm{T}_Y\Ms}(Z).
\end{equation*}
The non-monotone line search is presented in step~3 and step~5. The metric projection $\projec_{\Ms}$ is chosen as the retraction map in step~4, which can be calculated by a truncated SVD. Note that this projection is not necessarily unique, but we can always choose one in practice.  All these calculations in Algorithm~\ref{alg:non-monotone gradient} can be efficiently achieved by exploiting the low-rank structure of $X^{(j)}=U\Sigma V\zz$. The interested readers are referred to~\cite{vandereycken2013low} for detailed implementations. 

We terminate Algorithm~\ref{alg:non-monotone gradient} when the maximum iteration number $j_{\max}$ is reached or other stopping criteria in section~\ref{sec:experiments} are satisfied. Regarding the time efficiency of Algorithm~\ref{alg:non-monotone gradient}, it often compares favorably  with other methods for fixed-rank optimization; see numerical examples in subsection~\ref{subsec:fixed-rank-numerical}. %Figure~\ref{fig:CGvsBB} and Figure~\ref{fig:CGvsBB_nk}. 

\subsection{Rank increase}
In the flowchart of RRAM (Figure~\ref{fig:flowchart}), if $s=k$, we do not increase the rank. Otherwise, given $X_p\in\Ms$, we check the condition
\begin{equation}\label{eq:check-increase}
%\norm{G_{\le k}(X_p)-G_s(X_p)}\ff=
\norm{N_{k-s}(X_p)}\ff 
> \epsilon\norm{G_s(X_p)}\ff.
\end{equation}
If it holds, then it means that a significant part of $G_{\leq k}(X_p)$ is in the normal space to $\mathcal{M}_s$, in which case we consider that the current rank $s$ is too small.
In order to increase the rank of $X_p$, we propose a ``normal correction" step by searching along the normal space. Specifically, we consider a normal vector 
$$\projec_{\dkh{\TX\Ms}^{\perp}} (-\nabla f(X_p))=-\nabla f(X_p)-G_s (X_p)\in\dkh{\TX\Ms}^{\perp}$$
and choose the best rank-$l$ approximation
\begin{equation}\label{eq:normal-correction}
WDY\zz \in \argmin_{\rank(N)\le l} \norm{-\nabla f(X_p)-G_s (X_p)-N}\ff
\end{equation}
such that $\tilde{l}\le l$, $W\in\stiefel(\tilde{l},m)$, $Y\in\stiefel(\tilde{l},n)$, and $D\in\R^{\tilde{l}\times \tilde{l}}$ is a diagonal matrix that has full rank.
Note that it is equivalent to a $l$-truncated SVD of $-\nabla f(X_p)-G_s (X_p)$.

To verify the validity of ``normal correction" step, we will need the following result.

%If it holds, the current update rank $s$ may be not adequate to contain the full information of $G_{\le k}(X_p)$, thus the objective $f$. 

%One possible remedy is involving extra information from the normal space of $\TX\Ms$ to decrease $f$ since  $G_{\le k}(X_p)$ has the orthogonal decomposition \eqref{eq:orth-decom}.

%Next, we perform how to increase the rank. According to the definition of $N_{k-s}(X)$ in~\eqref{eq:normal-approx}, it follows that ${N}_{k-s}(X)\in\NXleks$ is the best rank-($k-s$) approximation of $-\nabla f(X)-G_s (X)$, i.e.,
%\begin{equation*}
%	{N}_{k-s}(X)\in\argmin_{\rank(N)\le (k-s)} \norm{-\nabla f(X)-G_s (X)-N}\ff.
%\end{equation*}
%The following proposition shows that indeed, any the best rank-$l$ approximation of $-\nabla f(X)-G_s (X)$ belongs to the normal space and is a descent direction for $f$.

%As $G_{\le k}(X_p)$ is a descent direction and $G_{s}(X_p)$ is sufficiently small (after the fixed-rank stage), ${N}_{k-s}(X_p)=G_{\le k}(X_p)-G_{s}(X_p)$ is still a descent direction. 

\begin{proposition}\label{prop:WY}
	Given $X=U\Sigma V\zz\in\Ms$, every best rank-$l$ approximation $WDY\zz$ of $-\nabla f(X)-G_s (X)$ satisfies that
	%	belongs to $\dkh{\TX\Ms}^{\perp}$, and 
	$$W\zz U=0\quad \mbox{and} \quad Y\zz V=0.$$  Moreover, if $-\nabla f(X)-G_s (X)\neq0$, then $WDY\zz$ is a descent direction for $f$, i.e., 
	\begin{align*}
		\left.\frac{\mathrm{d}}{\mathrm{d}t} f(X+tWDY\zz)\right|_{t=0} =-\norm{D}\fs<0.
	\end{align*}
\end{proposition}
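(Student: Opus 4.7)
The plan is to exploit the fact that $-\nabla f(X) - G_s(X)$ is itself the normal component of $-\nabla f(X)$, so that it inherits the block structure of the normal space described in \eqref{eq:normal-space}. Concretely, by \eqref{eq:Gs_define} and \eqref{eq:normal-proj},
\begin{equation*}
M := -\nabla f(X) - G_s(X) = \projec_{\dkh{\TX\Ms}^{\perp}}(-\nabla f(X)) = -U^{}_\perp \dkh{U_\perp\zz \nabla f(X) V^{}_\perp} V_\perp\zz,
\end{equation*}
so $M = U_\perp M' V_\perp\zz$ with $M' := -U_\perp\zz \nabla f(X) V_\perp \in \R^{(m-s)\times(n-s)}$. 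This factored form is what drives the whole argument.

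For the orthogonality statement, I would lift an SVD $M' = \hat U \hat\Sigma \hat V\zz$ of $M'$ to an SVD $M = (U_\perp \hat U)\hat\Sigma(V_\perp \hat V)\zz$ of $M$, using that $U_\perp \hat U$ and $V_\perp \hat V$ still have orthonormal columns. Any best rank-$l$ approximation $WDY\zz$ (with $W\in\stiefel(\tilde l,m)$, $Y\in\stiefel(\tilde l,n)$, $D$ full-rank diagonal, $\tilde l\le l$) corresponds to retaining the top $\tilde l$ positive singular values, so the columns of $W$ lie in $\range(U_\perp)$ and the columns of $Y$ lie in $\range(V_\perp)$. Hence $W\zz U = 0$ and $Y\zz V = 0$.

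For the descent claim, I would compute the Fr\'echet derivative as
\begin{equation*}
\left.\frac{\mathrm{d}}{\mathrm{d}t} f(X + t WDY\zz)\right|_{t=0} = \jkh{\nabla f(X), WDY\zz} = -\jkh{G_s(X) + M,\, WDY\zz}.
\end{equation*}
The orthogonality just established places $WDY\zz$ in $\dkh{\TX\Ms}^{\perp}$, and $G_s(X)\in\TX\Ms$, so $\jkh{G_s(X), WDY\zz}=0$. Since $W$ and $Y$ are (leading) left/right singular vectors of $M$ with corresponding diagonal of singular values $D$, one has $W\zz M Y = D$; a short cyclic-trace manipulation of $\jkh{M, WDY\zz} = \tr(Y\zz M\zz W\, D)$ then yields $\jkh{M, WDY\zz} = \tr(D\zz D) = \norm{D}\fs$. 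Combining, the derivative equals $-\norm{D}\fs$, which is strictly negative once $M\neq 0$ forces $D\neq 0$.

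The one subtle point is the non-uniqueness of a best rank-$l$ approximation when the $l$-th and $(l+1)$-st singular values of $M$ coincide; this is the main obstacle to making the argument truly work for \emph{every} best approximation. I would handle it by observing that for any best rank-$l$ approximation the column space of $W$ and row space of $Y$ must still be contained in those of $M$, i.e.\ in $\range(U_\perp)$ and $\range(V_\perp)$, and that the identity $W\zz M Y = D$ is forced by $WDY\zz$ being the leading part of a (possibly non-unique) SVD of $M$. Thus no uniqueness of SVD is required, and both conclusions of the proposition hold in full generality.
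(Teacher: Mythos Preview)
Your proposal is correct and follows essentially the same route as the paper: both arguments rest on writing $-\nabla f(X)-G_s(X)=U_\perp M' V_\perp^{\top}$, lifting an SVD of $M'$ to one of $M$, and invoking Eckart--Young to conclude that the columns of $W$ (resp.\ $Y$) lie in $\range(U_\perp)$ (resp.\ $\range(V_\perp)$). The only cosmetic difference is in the descent computation: the paper substitutes the explicit form $W=U_\perp[\bar u_1,\dots,\bar u_{\tilde l}]$ and uses $U_\perp^{\top}\nabla f(X)V_\perp=-\bar U\bar\Sigma\bar V^{\top}$ directly, whereas you first split $-\nabla f(X)=G_s(X)+M$, kill the tangential term by the orthogonality just proved, and finish via $W^{\top}MY=D$; both yield $-\norm{D}\fs$ with the same amount of work.
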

\begin{proof}
	It follows from~\eqref{eq:normal-proj} and \eqref{eq:Gs_define} that  
	\begin{align*}
	H(X)&:=-\nabla f(X)-G_s (X) =-\nabla f(X)-\projec_{\TX\Ms}(-\nabla f(X))  \\
	&{~}=\projec_{\dkh{\TX\Ms}^{\perp}} (-\nabla f(X)) = -P^{\perp}_U \nabla f(X) P^{\perp}_V\\
	&{~}= -U_\perp U_\perp\zz \nabla f(X) V_\perp V_\perp\zz.
	\end{align*}
	Therefore, any SVD of $H(X)$ has the form $U_\perp \bar{U} \bar{\Sigma} \bar{V}\zz V_\perp\zz$, where
	\begin{align*}
		\bar{U}&=[\bar{u}_1,\dots,\bar{u}_r]\in\stiefel(r,m-s), \\
		\bar{V}&=[\bar{v}_1,\dots,\bar{v}_r]\in\stiefel(r,n-s), \\
		\bar{\Sigma}&=\diag(\bar{\sigma}_1,\dots,\bar{\sigma}_r)
	\end{align*} 
	with $\bar{\sigma}_1\ge\cdots\ge\bar{\sigma}_r>0$ and $r=\rank(H(X))$. %\le\min\dkh{m-s,n-s}
	It follows that 
	\begin{align*}\label{eq:HX}
	%	 	-\nabla f(X)-G_s (X) 
	H(X)= -U_\perp U_\perp\zz \nabla f(X) V_\perp V_\perp\zz 
	=U_\perp \bar{U} \bar{\Sigma} \bar{V}\zz V_\perp\zz
	= \sum_{i=1}^{r} \bar{\sigma}_i \dkh{U_{\perp}\bar{u}_i} \dkh{V_{\perp}\bar{v}_i}\zz,
	\end{align*}
	which is any compact SVD of $H(X)$. Note that $WDY\zz$ is a best rank-$l$ approximation of $H(X)$ iff $WDY\zz$ is a $l$-truncated SVD of $\sum_{i=1}^{r} \bar{\sigma}_i \dkh{U_{\perp}\bar{u}_i} \dkh{V_{\perp}\bar{v}_i}\zz$, i.e., $\sum_{i=1}^{\tilde{l}} \bar{\sigma}_i \dkh{U_{\perp}\bar{u}_i} \dkh{V_{\perp}\bar{v}_i}\zz$ where $\tilde{l}:=\min(l,r)$. Let 
	\begin{align*}
	W&=U_{\perp}[\bar{u}_1,\dots,\bar{u}_{\tilde{l}}], \\
	Y&=V_{\perp}[\bar{v}_1,\dots,\bar{v}_{\tilde{l}}], \\
	D&=\diag(\bar{\sigma}_1,\dots,\bar{\sigma}_{\tilde{l}}).
	\end{align*} 
	It yields that $W\zz U=0$ and $Y\zz V=0$. 
	%In view of~\eqref{eq:normal-space}, we have that $WDY\zz\in\dkh{\TX\Ms}^{\perp}$. 
	
	In addition, it follows from $U_\perp\zz \nabla f(X) V_\perp=-\bar{U} \bar{\Sigma} \bar{V}\zz$ that 
	\begin{align*}
	\left.\frac{\mathrm{d}}{\mathrm{d}t} f(X+tWDY\zz)\right|_{t=0} &= \jkh{\nabla f(X), WDY\zz} \\
	&= \jkh{\nabla f(X), U_{\perp} \dkh{\sum_{i=1}^{\tilde{l}} \bar{\sigma}_i \bar{u}_i \bar{v}_i\zz} V_{\perp}\zz }\\
	&= \jkh{U_{\perp}\zz \nabla f(X)V_{\perp},  {\sum_{i=1}^{\tilde{l}} \bar{\sigma}_i \bar{u}_i \bar{v}_i\zz}} \\
	&= - \jkh{\bar{U} \bar{\Sigma} \bar{V}\zz, \sum_{i=1}^{\tilde{l}} \bar{\sigma}_i \bar{u}_i \bar{v}_i\zz}\\
	&=-\sum_{i=1}^{\tilde{l}}\bar{\sigma}_i^2=-\norm{D}\fs<0.
	\end{align*}
	Thus, $WDY\zz$ is a descent direction.\qed
\end{proof}

In view of Proposition~\ref{prop:WY} and~\eqref{eq:normal-space}, $WDY\zz$ is a descent direction in $\dkh{\TX\Ms}^{\perp}$. Hence, we can perform a line-search
\begin{equation*}
\min\limits_{\alpha> 0}~f(X_p+\alpha WDY\zz).
\end{equation*}
As the objective function $f(X)$ in~\eqref{prob:fixed-rank} is quadratic, this problem has the closed-form solution
\begin{equation}\label{eq:stepsize-alpha}
\alpha^* = -\frac{\jkh{\po(WDY\zz),\po(X-A)}}{\norm{\po(WDY\zz)}\fs}.
\end{equation}
Moreover, given the low-rank structure of $X_p=U\Sigma V\zz$, it readily follows from Proposition~\ref{prop:WY} that 
\begin{align*}
X_+:=X_p+\alpha WDY\zz &= U\Sigma V\zz + \alpha WDY\zz
= \begin{bmatrix}
U & W
\end{bmatrix} \begin{bmatrix}
\Sigma   & 0\\
0& \alpha D  
\end{bmatrix} \begin{bmatrix}
V &Y
\end{bmatrix}\zz
\end{align*}
has rank $s+\tilde{l}$. 
In addition, the SVD of $X_+$, denoted by $X_+=U_+ \Sigma_+ V_+\zz$, can be directly assembled by sorting the columns of $[U~W]$, $\fkh{\begin{smallmatrix} 
	\Sigma   & 0\\
	0& \alpha D  
	\end{smallmatrix}}$ and $[V~Y]$ according to the descent order of diagonal entries in  $\fkh{\begin{smallmatrix} 
	\Sigma   & 0\\
	0& \alpha D  
	\end{smallmatrix}}$.

We summarize all these steps in Algorithm~\ref{alg:rank-increase}.
In practice,  the rank increase number $l$ is set to a constant.

\begin{algorithm2e}[!h]
	\caption{Rank increase}
	\label{alg:rank-increase}
	%	\SetKwInOut{Input}{Input}\SetKwInOut{Output}{Input}
	%	\SetKwComment{Comment}{}{}
	%	\Linesnumbered
	%	\BlankLine %\dontprintsemicolon
	\textbf{Input:}  $X_p=U\Sigma V\zz \in\Ms$, a rank increase number $l\le k-s$.\\
	{
		%		1: Set a rank increase number $l$. \\
		1: Compute $W$, $D$ and $Y$ by~\eqref{eq:normal-correction}.\\
		2: Compute the step size $\alpha$ by~\eqref{eq:stepsize-alpha}.\\
		3: Sort the columns of $[U~W], \fkh{\begin{smallmatrix}
			\Sigma & 0\\0 &  \alpha D
			\end{smallmatrix}}, [V~Y]$ by descending the diagonal entries of  $\fkh{\begin{smallmatrix}
			\Sigma & 0\\0 &  \alpha D
			\end{smallmatrix}}$, and accordingly obtain  $U_+$, $\Sigma_+$, $V_+$.
	}
	
	\textbf{Output:} $\tilde{X}_p := U_+ \Sigma_+ V_+\zz$ with the rank $s+\tilde{l}$, where $\tilde{l} := \rank(\tilde{X}_p) - s$. 
\end{algorithm2e}

A quick verification of rank increase is presented in Figure~\ref{fig:motivation_increase}. We consider a method that combines the fixed-rank optimization (RBB) with the rank-one increase. Figure~\ref{fig:motivation_increase} reports the evolution of $\norm{N_{k-s}(X)}\ff$ and $\norm{G_s(X)}\ff$ for solving problem~\eqref{prob:bounded-rank} with a rank-one initial point. It is observed that when the stage of fixed-rank optimization is finished,  $\norm{G_s(X)}\ff$ dramatically degrades while $\norm{N_{k-s}(X)}\ff$ does not. Moreover, increasing the rank will lead to a reduction on  $\norm{N_{k-s}(X)}\ff$, which verifies the effect of rank increase.

\begin{figure}[htbp]
	\centering
	%	\subfigure[Test on varying $m$]
	{\includegraphics[width=.57\textwidth]
		{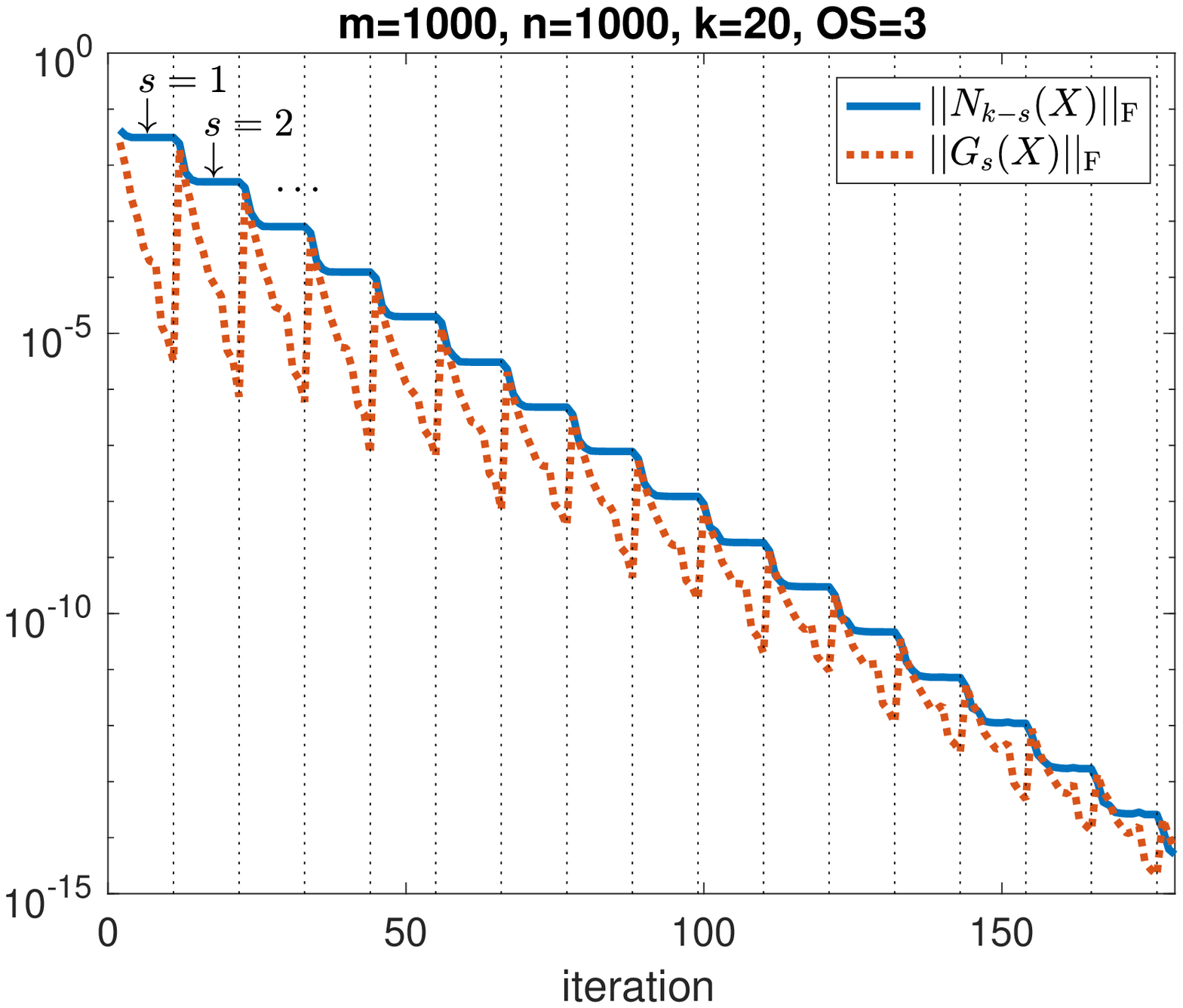}}
	\caption{Effect of the rank increase}
	\label{fig:motivation_increase}
\end{figure}

\subsection{Rank reduction}\label{subsec:rank-reduction}
As $\Ms$ is not closed, an iterate sequence in $\Ms$ may have limit points of rank less than $s$. If the iterates are found to approach $\mathcal{M}_{\leq (s-1)}$, then it makes sense to solve the optimization problem on the set of smaller fixed-rank matrices. In addition, it can reduce the dimension of problem and thereby save the memory. 

One possible strategy to decrease the rank has been proposed in~\cite{Zhou2016riemannian}. Specifically, given $X_p=U\Sigma V\zz$ with $\Sigma=\diag(\sigma_1,\dots,\sigma_s)$ and $\sigma_1\ge\cdots\ge\sigma_s>0$, given a threshold $\Delta>0$, one can set to zero the singular values smaller than $\sigma_1 \Delta$. This rank reduction step returns a matrix $\tilde{X}_p\in\manifold_{\tilde{r}}$ by the best rank-$\tilde{r}$ approximation of $X_p$, where $\tilde{r}:=\max\hkh{i:\sigma_i\ge\sigma_1 \Delta}$.
% is the number of singular values larger than $\sigma_1 \Delta$.

In practice, we observe that the gap of singular values also plays an important role in fixed-rank optimization; see the numerical examples in subsection~\ref{subsec:rank-reduction-numerical}. Therefore, we propose to check if there is a large gap in the singular values and, if so, to decrease the rank accordingly.
%Therefore, it is worth detecting the large gap of singular values and decreasing the rank. 
To this end, we consider a criterion based on the relative change of singular values
\begin{equation}\label{eq:gap_sv-0}
{\frac{\sigma_i-\sigma_{i+1}}{\sigma_i}}>\Delta,
\end{equation}
where $\Delta\in(0,1)$ is a given threshold. Figure~\ref{fig:svd} presents two typical distributions of singular values, and ${\sigma_i}/{\sigma_1}$, ${(\sigma_i-\sigma_{i+1})}/{\sigma_i}$ are also computed. We observe that the large gap of singular values in the first row can be detected by~\eqref{eq:gap_sv-0} with setting $\Delta=0.1$, while the condition $\sigma_i<\sigma_1 \Delta$ is not activated. 

\begin{figure}[htpb]
	\centering
	%	\subfigure[$\sigma_i$]
	{\includegraphics[width=.325\textwidth]
		{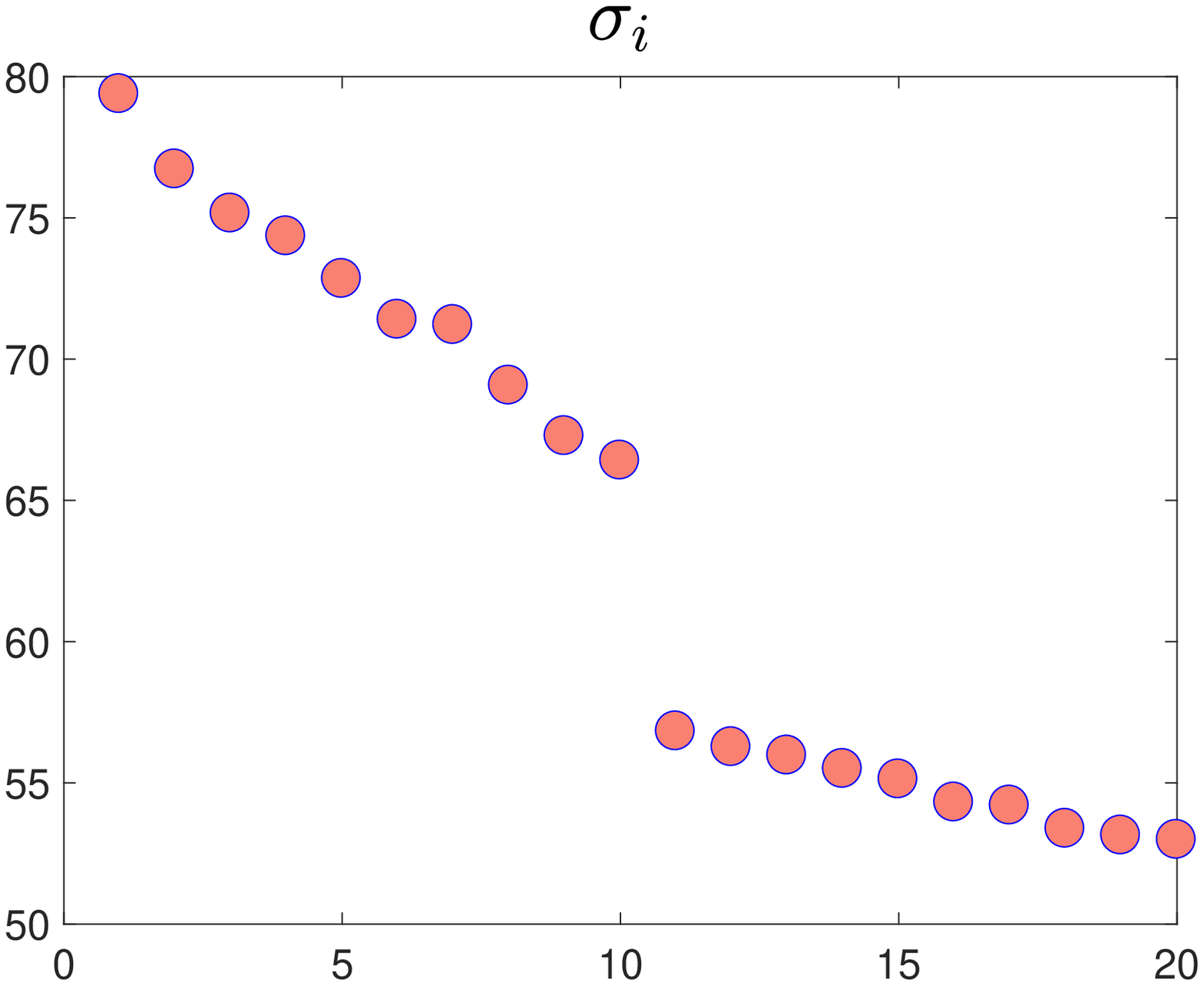}}
	%	\subfigure[$\frac{\sigma_i - \sigma_{i+1}}{\sigma_i}$]
	{\includegraphics[width=.325\textwidth]
		{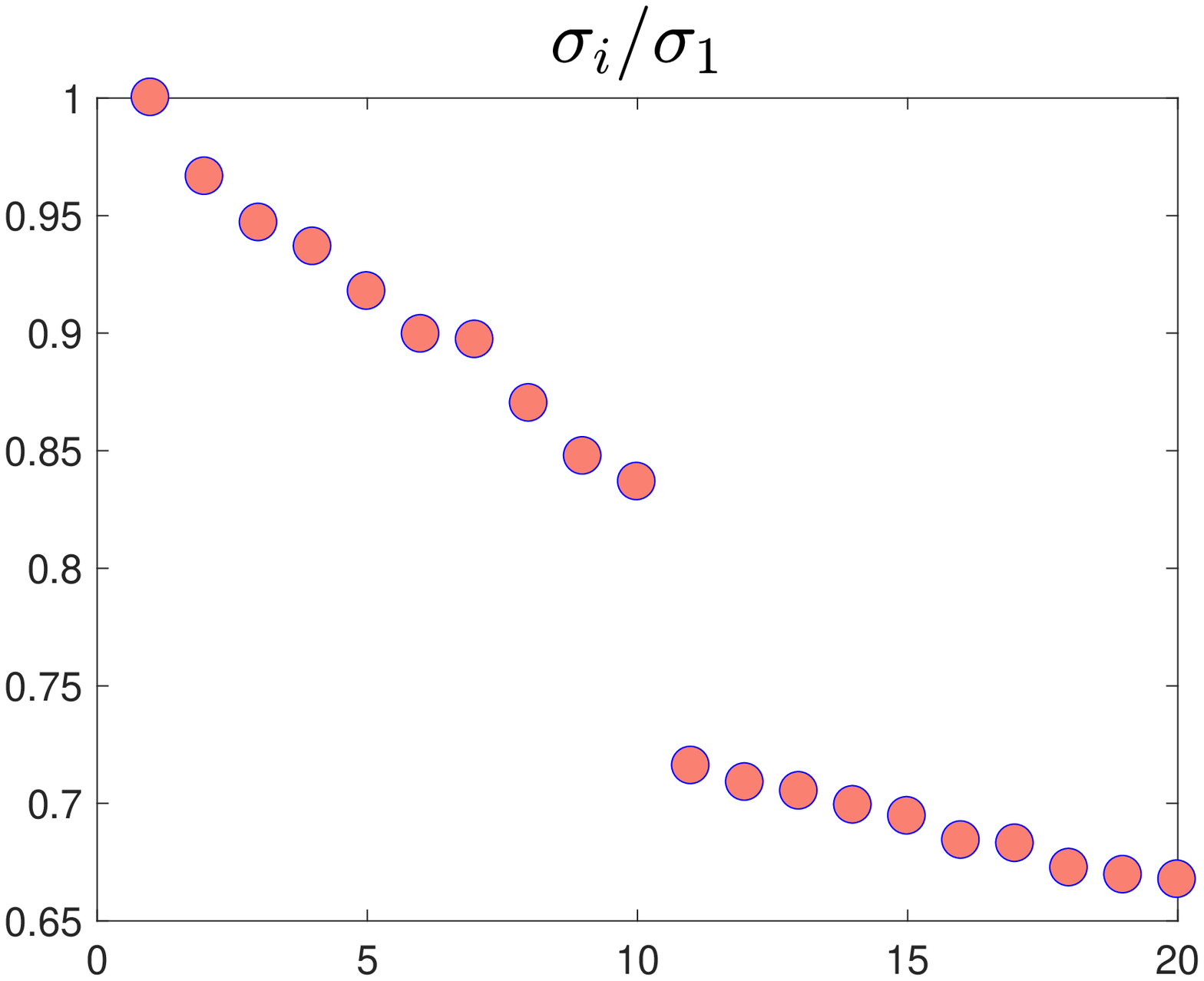}}
	%	\subfigure[$\frac{\sigma_i - \sigma_{i+1}}{\sigma_1}$]
	{\includegraphics[width=.325\textwidth]
		{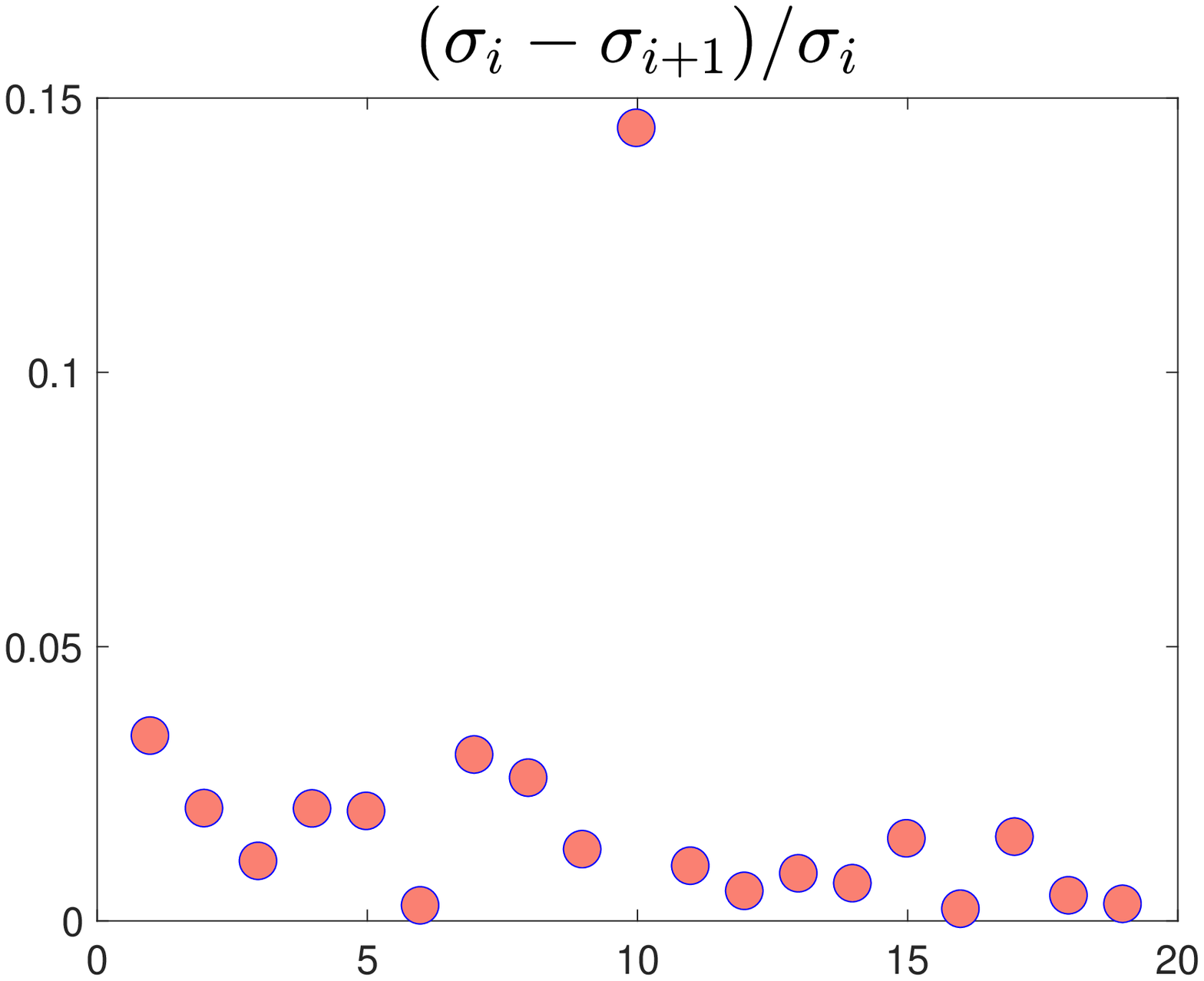}}
	
	{\includegraphics[width=.325\textwidth]
		{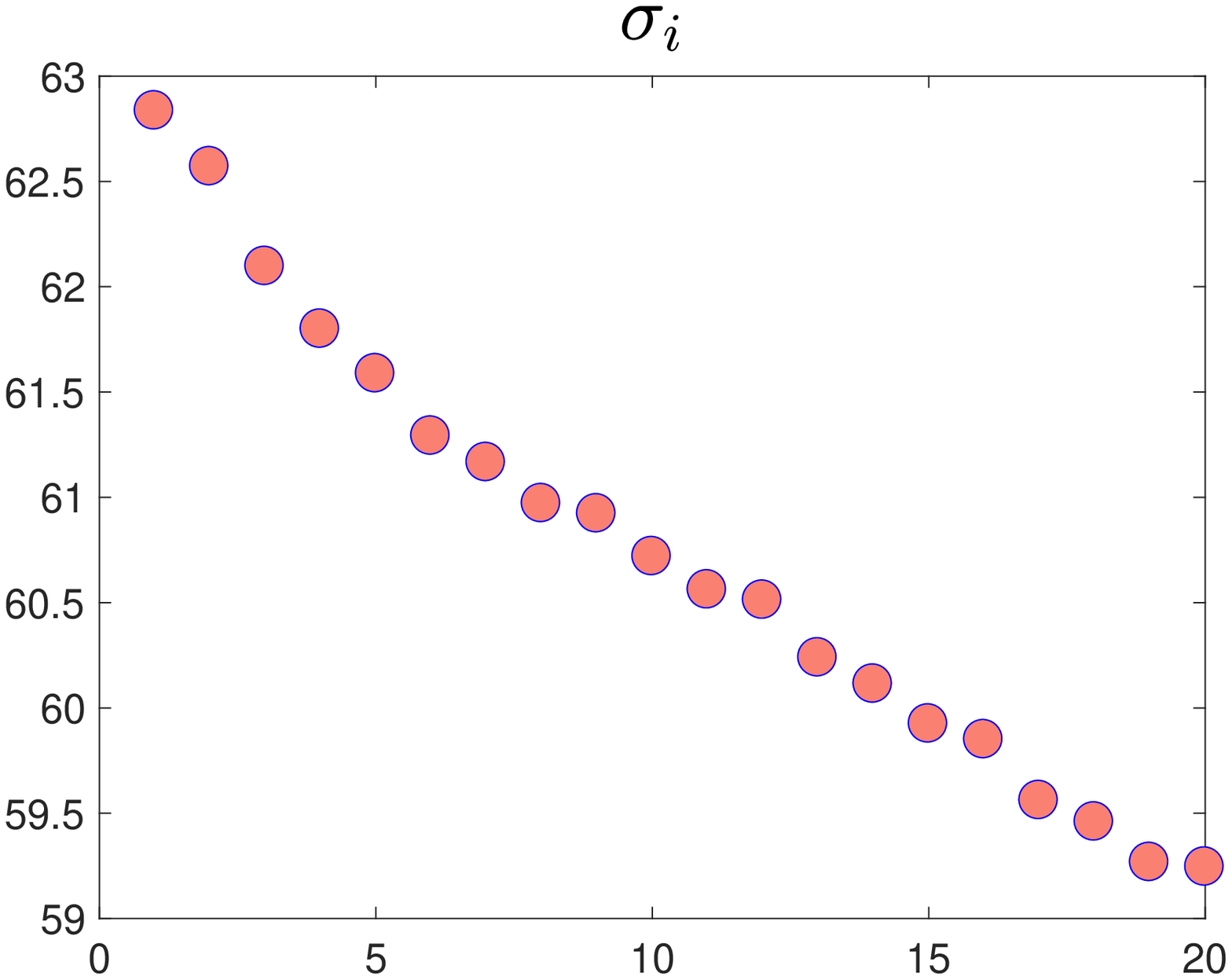}}
	%	\subfigure[$\frac{\sigma_i - \sigma_{i+1}}{\sigma_i}$]
	{\includegraphics[width=.325\textwidth]
		{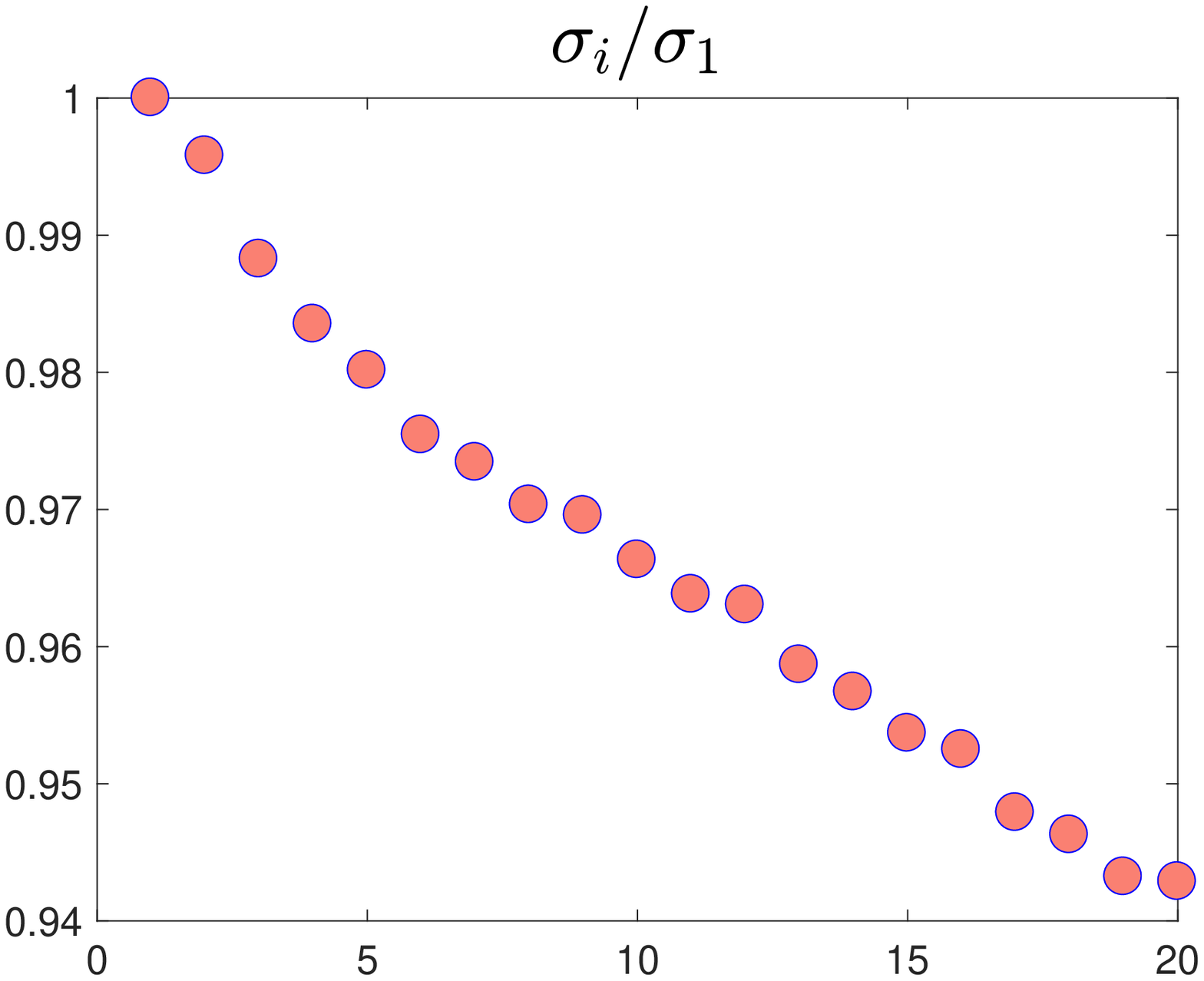}}
	%	\subfigure[$\frac{\sigma_i - \sigma_{i+1}}{\sigma_1}$]
	{\includegraphics[width=.325\textwidth]
		{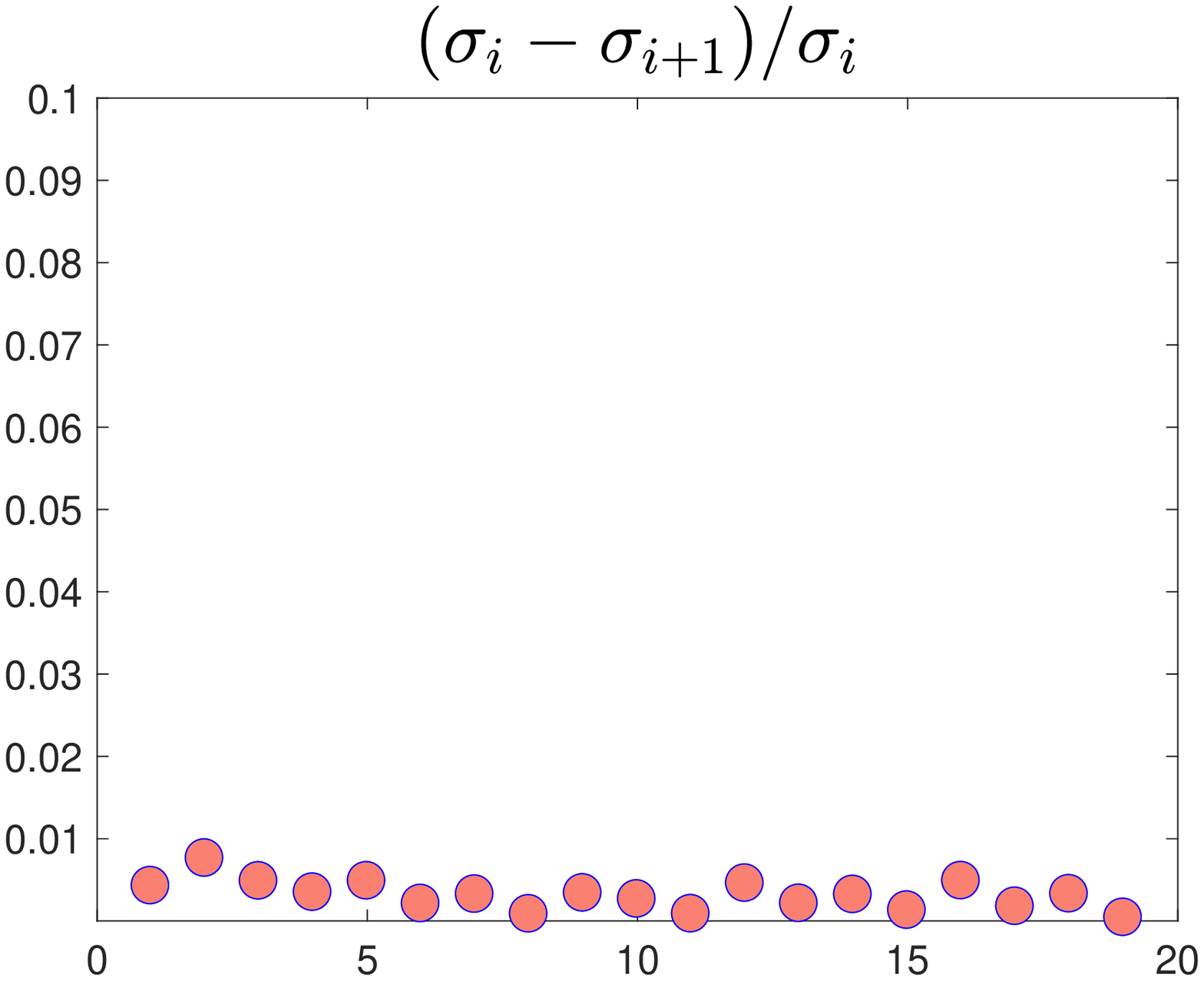}}
	\caption{Singular values of $X=\projec_{\manifold_{20}}(\po(A))$. First row: $A=LR\zz\in\R^{1000\times 1000}$, where $L, R\in\R^{1000\times 10}$. Second row: $A=\texttt{randn}(1000,1000)\in\R^{1000\times 1000}$.  \label{fig:svd}}
\end{figure}

In order to avoid losing too much information, we do not reduce the rank aggressively. The large gap of singular values is only detected by finding the index $\tilde{r}$ such that 
	\begin{equation}\label{eq:gap_sv}
		\tilde{r} := \left\{
		\begin{array}{ll}
			s, & \quad\mbox{if~} \max_{i}\hkh{\frac{\sigma_i-\sigma_{i+1}}{\sigma_i}}\leq\Delta;\\
			\argmax_{i}\hkh{\frac{\sigma_i-\sigma_{i+1}}{\sigma_i}}, & \quad\mbox{otherwise}.
		\end{array}
		\right.
	\end{equation}
Note that if there is no large gap detected in terms of the threshold $\Delta$, then we do not reduce the rank, i.e., $\tilde{r}=s$. Otherwise, we choose the index that maximizes the gap ${(\sigma_i-\sigma_{i+1})}/{\sigma_i}$. A benefit of taking maximum of ${(\sigma_i-\sigma_{i+1})}/{\sigma_i}$ is that we can exclude some undesirable cases that the top one or few singular values are large and quite separated from a bulk of singular values. For instance, if singular values are distributed as $\{1,0.89,0.88,\dots,0.8,0.1,0.09,\dots,0.01\}$, the condition~\eqref{eq:gap_sv} returns a reasonable gap detection between $0.8$ and $0.1$ with $\Delta=0.1$. However, if we consider the smallest $i$ such that ${(\sigma_i-\sigma_{i+1})}/{\sigma_i}>0.1$, we will drop all singular values except for the largest one, which is too aggressive. 
	
	The proposed rank reduction step is listed in Algorithm~\ref{alg:rank-reduction}. 
%	Note that in Figure~\ref{fig:flowchart}, the rank reduction step is checked when there is an initial input or the fixed-rank optimization is finished.
	
	\begin{algorithm2e}[htbp]
		\caption{Rank reduction}
		\label{alg:rank-reduction}
		%	\SetKwInOut{Input}{Input}\SetKwInOut{Output}{Input}
		%	\SetKwComment{Comment}{}{}
		%	\Linesnumbered
		%	\BlankLine %\dontprintsemicolon
		\textbf{Input:}  $X_p=U\Sigma V\zz \in\Ms$, $\Delta\in(0,1)$.\\
		{
			1: Choose the index $\tilde{r}$ by~\eqref{eq:gap_sv}.\\
			2. Choose $U_{\tilde{r}}, V_{\tilde{r}}$ as the first $\tilde{r}$ columns of $U$, $V$, and set $\Sigma_{\tilde{r}}=\diag(\sigma_1,\dots,\sigma_{\tilde{r}})$.
		}
		
		\textbf{Output:} $\tilde{X}_p := U_{\tilde{r}}\Sigma_{\tilde{r}} V_{\tilde{r}}\zz$ with the rank $\tilde{r}$.
	\end{algorithm2e}

Algorithm~\ref{alg:rank-reduction} is just one among many possible rank reduction strategies that retain the ``largest'' singular values and set the other ones to zero. The performance of those strategies is highly problem-dependent. Nevertheless, without such a strategy, the rank-adaptive method may miss opportunities to reduce the rank and thereby benefit from a reduced computational cost. Moreover, in order to address the issue, mentioned in subsection~\ref{subsec:related-work}, that $\mathcal{M}_k$ is not closed, some rank reduction mechanism is required to rule out sequences with infinitely many points in $\mathcal{M}_{>s}$ and a limit point in $\mathcal{M}_s$.

%Another possible rank reduction strategy is to retain the ``dominant" singular values and drop the remainder. Note that rank reduction strategies are highly problem-dependent. For any strategy, one can find nasty examples such that  the rank reduction step fails and cannot drop the rank as expected. Nevertheless, introducing such a step in practice is a positive attempt in rank-related optimization for improving the performance; e.g., see~\cite{Zhou2016riemannian}. 

\begin{remark}\label{remark:tab}
	In comparison with the rank-related algorithms in Table~\ref{tab:rank-related},  the proposed rank-adaptive method in Figure~\ref{fig:flowchart} has several novel aspects.
	%		In this section, we concretely explain how to construct a rank-adaptive method under the framework of Figure~\ref{fig:flowchart}. The proposed RRAM method has several novel aspects in comparison with the rank-related algorithms in Table~\ref{tab:rank-related}. 
	Firstly, as an inner iteration for the fixed-rank optimization, the RBB method with non-monotone line search tends to outperform other Riemannian methods such as RCG; see subsection~\ref{subsec:fixed-rank-numerical} and~\ref{subsec:ablation-numerical}. Secondly, we search along the normal space to increase the rank. 
	This contrasts with~\cite{Zhou2016riemannian}, where the update direction is obtained by projecting the antigradient onto a tangent cone. Moreover, in contrast with~\cite[\S III.A]{Uschmajew_V:2015}, we do not assume the fixed-rank algorithm (Algorithm~\ref{alg:non-monotone gradient}) to return a point $X_p$ that satisfies $G_s(X_p)=0$; however, if it does, then the proposed rank increase step coincides with the update $X_+ = X +\alpha G_{\le (s+l)}(X)$ of~\cite{Uschmajew_V:2015} in view of~\eqref{eq:critical-two}.
%	In addition, when $G_s(X)=0$, the proposed rank increase step can be viewed as a reduced case of $X_+ = X +\alpha G_{\le (s+l)}(X)$ in~\cite{Uschmajew_V:2015} due to~\eqref{eq:critical-two}.
	%		in view of the definition~\eqref{eq:critical-two}, the proposed rank increase step can be viewed as a reduced case of $X_+ = X +\alpha G_{\le (s+l)}(X)$ in~\cite{Uschmajew_V:2015} when $G_s(X)=0$, which means the fixed-rank optimization is exactly solved. 
	Thirdly, the proposed rank reduction mechanism differs from the one in~\cite{Zhou2016riemannian}, as explained in subsection~\ref{subsec:rank-reduction}. Its performance is illustrated in subsection~\ref{subsec:rank-reduction-numerical}.
%	Thirdly, the rank reduction step aims to find a point with the most dominant singular values instead of a nearly rank-deficient point. Although it may lose information on the subspace of smaller singular values, it is able to find the ground-truth rank of a data matrix; see subsection~\ref{subsec:rank-reduction-numerical}.
\end{remark}

\section{Numerical experiments}\label{sec:experiments}
In this section, we first demonstrate the effectiveness of the proposed rank-adaptive algorithm, and then compare it with other methods on low-rank matrix completion problems. For simplicity, we restrict our comparisons to manifold-based methods since they usually perform well on this problem and are comparable with other low-rank optimization methods; see~\cite{vandereycken2013low}.

Unless otherwise mentioned, the low-rank matrix $$A=LR\zz\in\R^{m\times n}$$ in~\eqref{prob:bounded-rank} is generated by two random matrices $L\in\R^{m\times r}$, $R\in\R^{n\times r}$ with i.i.d. standard normal distribution. The sample set $\Omega$ is randomly generated on $\{1,\dots,m\}\times\{1,\dots,n\}$ with the uniform distribution of $\abs{\Omega}/(mn)$. The oversampling rate (OS, see~\cite{vandereycken2013low}) for $A$ is defined as the ratio of $\abs{\Omega}$ to the dimension of $\manifold_r$, i.e.,
$$\mathrm{OS}:=\frac{\abs{\Omega}}{(m+n-r)r}.$$
Note that $\mathrm{OS}$ represents the difficulty of recovering matrix $A$, and it should be larger than $1$.

The stopping criteria for different algorithms are based on  the relative gradient %, relative error 
and relative residual of their solutions, also the relative change of function value. Specifically, 
\begin{align*}
\mathrm{relative~gradient}:\quad & \frac{\norm{\rgrads{X}}\ff}{\max(1,\norm{X}\ff)}<\epsilon_g,\\
%	\mathrm{relative~error}:\quad &\frac{\norm{X-A}\ff}{\norm{X}\ff}<\epsilon_x,\\
\mathrm{relative~residual}:\quad &\frac{\norm{\po(X)-\po(A)}\ff}{\norm{\po(A)}\ff}<\epsilon_\Omega,\\
\mathrm{relative~change}:\quad &\abs{1-\frac{\norm{\po(X_i)-\po(A)}\ff}{\norm{\po(X_{i-1})-\po(A)}\ff}}<\epsilon_f.
\end{align*}
Once one of the above criteria or the maximum iteration number $1000$ is reached, we terminate the algorithms. Note that these criteria are introduced in~\cite{vandereycken2013low}. The default tolerance parameters are chosen as $\epsilon_g=10^{-12}$, 
%$\epsilon_x = 10^{-12}$, 
$\epsilon_\Omega = 10^{-12}$, $\epsilon_f = 10^{-4}$. 
The rank increase parameter $\epsilon$ in~\eqref{eq:check-increase} is set to $10$, and the rank increase number $l$ is $1$.
The rank reduction threshold $\Delta$ in~\eqref{eq:gap_sv} is set to $0.1$. The inner maximum iteration number $j_{\max}$ is set to 100, and other parameters in Algorithm~\ref{alg:non-monotone gradient} are the same as those in~\cite{gao2020riemannian}.

All experiments are performed on a laptop with 2.7 GHz Dual-Core Intel i5 processor and 8GB of RAM running MATLAB R2016b under macOS 10.15.2. The code that produces the result is available from \url{https://github.com/opt-gaobin/RRAM}.

%All experiments are performed on a workstation with two Intel(R) Xeon(R) Processors Silver 4110 (at 2.10GHz$\times 8$, 12M Cache) and 384GB of RAM running MATLAB R2018a under Ubuntu 18.10.

\subsection{Comparison on the fixed-rank optimization}\label{subsec:fixed-rank-numerical}
Before we test the rank-adaptive method, we first illustrate the performance of {the} RBB method proposed in Algorithm~\ref{alg:non-monotone gradient} on the fixed-rank optimization problem~\eqref{prob:fixed-rank}. We compare RBB with a state-of-the-art fixed-rank method called \texttt{LRGeomCG}\footnote{\label{footnote:RCG}Available from \url{https://www.unige.ch/math/vandereycken/matrix_completion.html}.} \cite{vandereycken2013low}, which is a Riemannian CG method for fixed-rank optimization.

The problem is generated with $m=n=10000, r=40$ and $\mathrm{OS}=3$. The rank parameter $k$ in~\eqref{prob:fixed-rank} is set to $k=r$, which means the true rank of $A$ is provided. The initial point is generated by
\begin{equation}\label{eq:initialization}
	X_0:=\projec_{\mathcal{M}_{k}}(\po(A)).
\end{equation}
Figure~\ref{fig:CGvsBB} reports the numerical results. It is observed that the RBB method performs better than LRGeomCG in terms of time efficiency to achieve comparable accuracy. In addition, one can find the non-monotone property of RBB that stems from the non-monotone line search procedure in Algorithm~\ref{alg:non-monotone gradient}.

\begin{figure}[htbp]
	\centering
	\subfigure[Relative gradient]
	{\includegraphics[width=.48\textwidth]
		{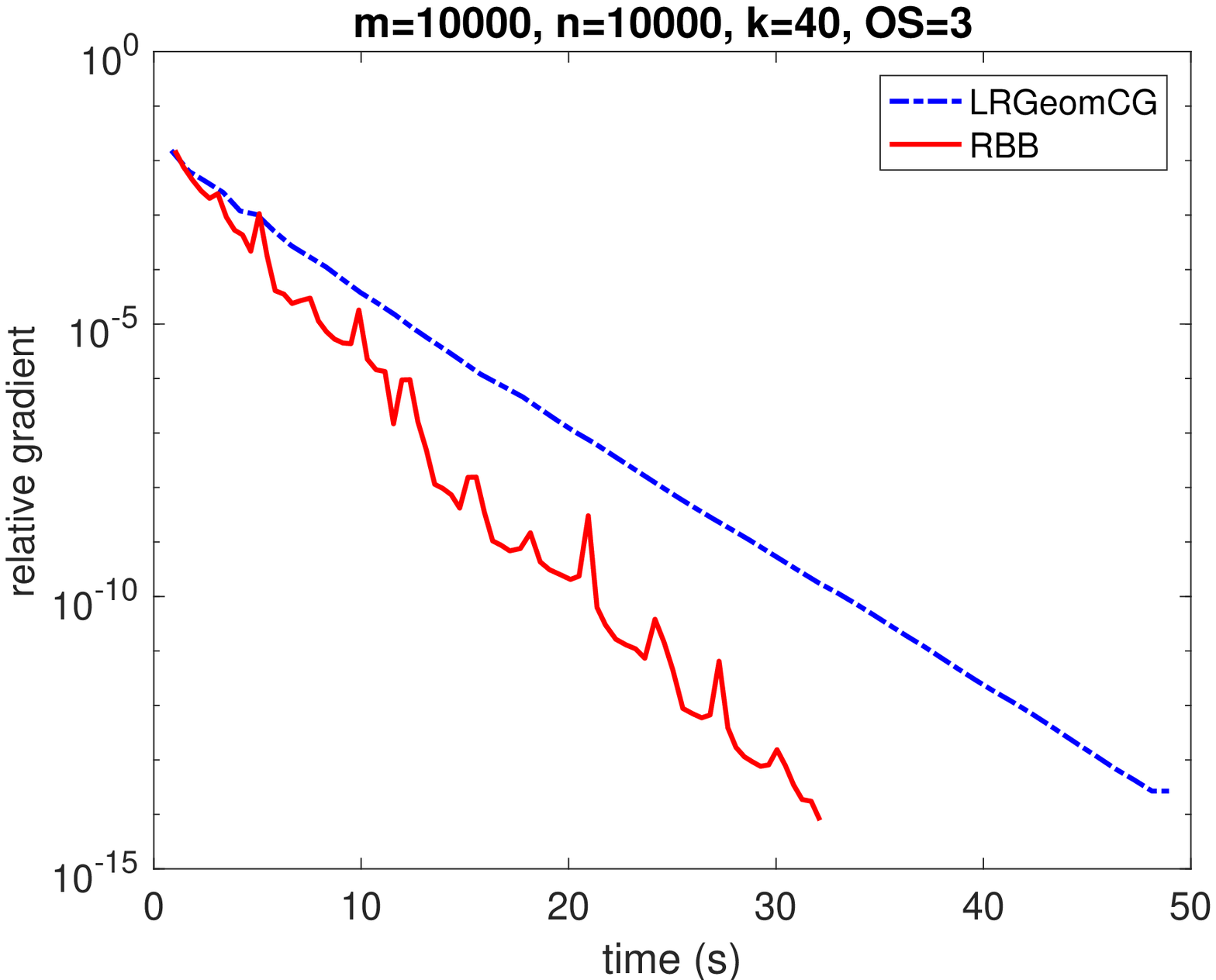}}
	\quad
	\subfigure[Relative residual]
	{\includegraphics[width=.48\textwidth]
		{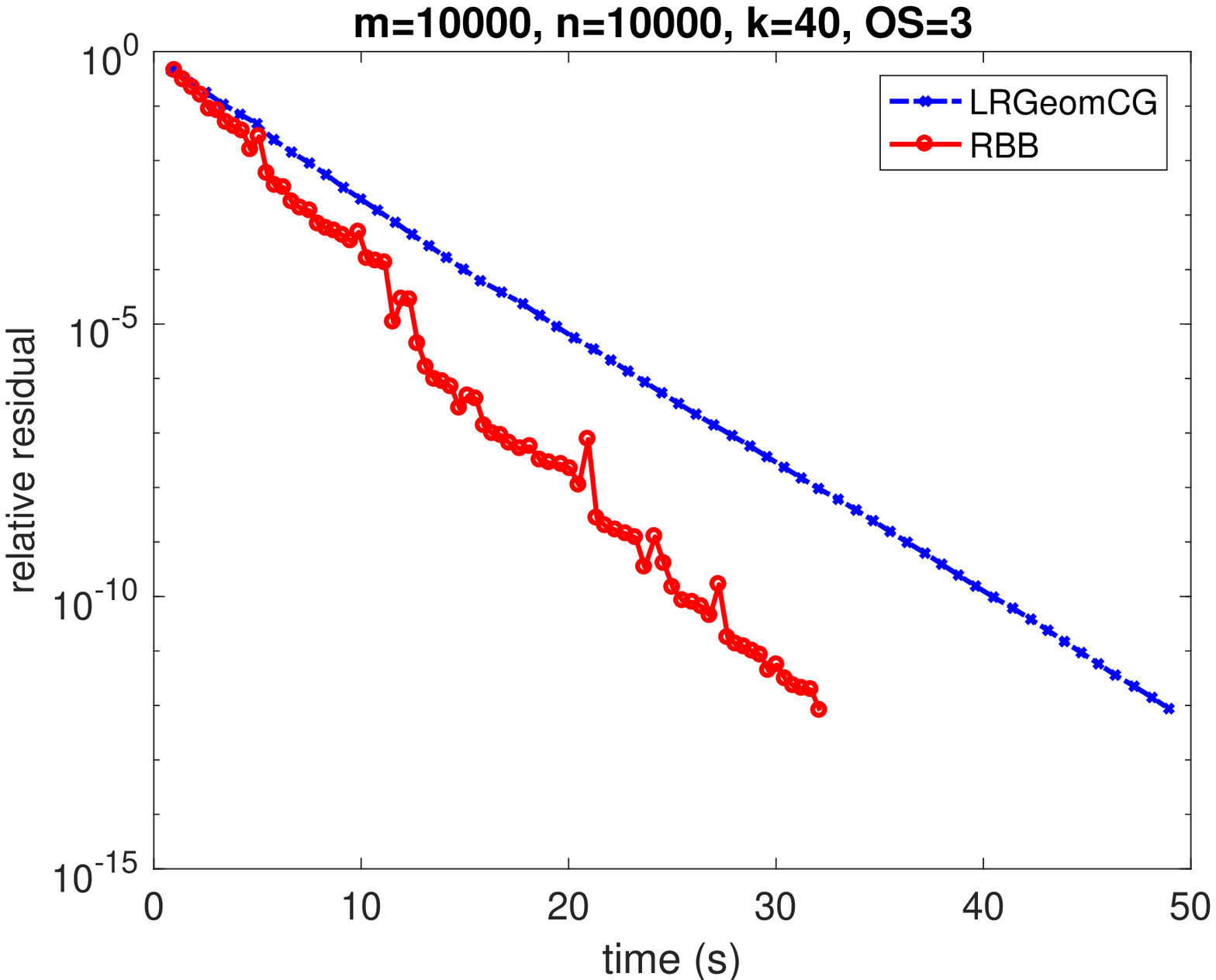}}
	\caption{A comparison on the fixed-rank optimization}
	\label{fig:CGvsBB}
\end{figure}

In order to investigate the performance of RBB on different problems, we test on three datasets with varying $m$, the rank parameter $k$, and $\mathrm{OS}$, respectively. Specifically, we fix the oversampling rate $\mathrm{OS}=3$, $k=20$, and chose $m=n$ from the set $\{2000j: j=1,\dots,10\}$. Alternatively, we choose $k$ from $\{10j: j=1,\dots,8\}$ and fix $m=n=10000$. In addition, the last dataset is varying $\mathrm{OS}$ from $\{1,\dots,10\}$, and choosing $m=n=10000$, $k=20$.  The running time of RBB and LRGeomCG are reported in Figure~\ref{fig:CGvsBB_nk}. Notice that RBB has less running time than LRGeomCG when the size of problem ($m$ and $k$) increases.
%, which further illustrates that RBB is more suitable for large-scale problems. 
Additionally, we observe that RBB 
outperforms LRGeomCG among all different oversampling settings.

\begin{figure}[htbp]
	\centering
	\subfigure[Test on varying $m$]
	{\includegraphics[width=.325\textwidth]
		{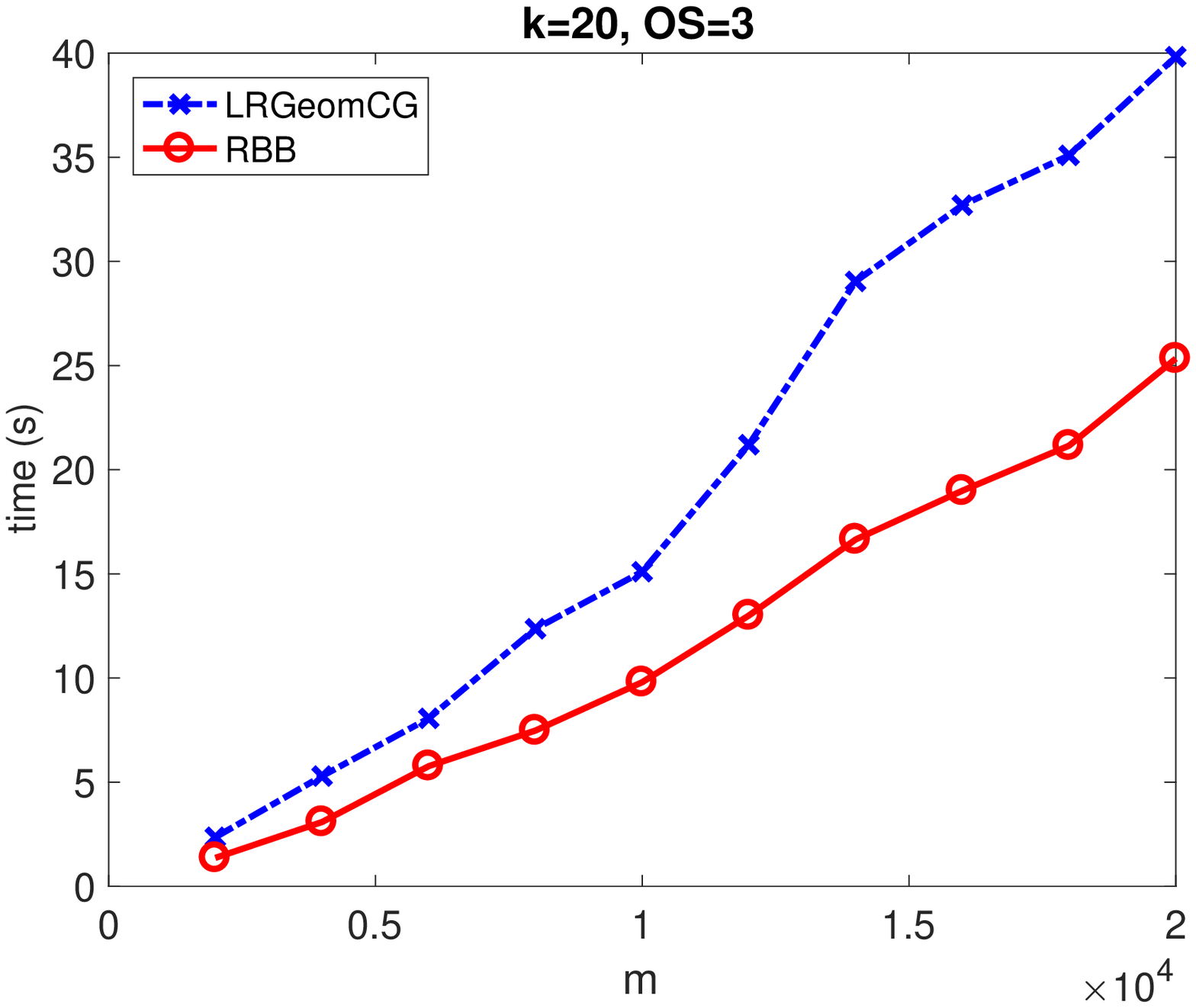}}
	%	\qquad\quad
	\subfigure[Test on varying $k$]
	{\includegraphics[width=.33\textwidth]
		{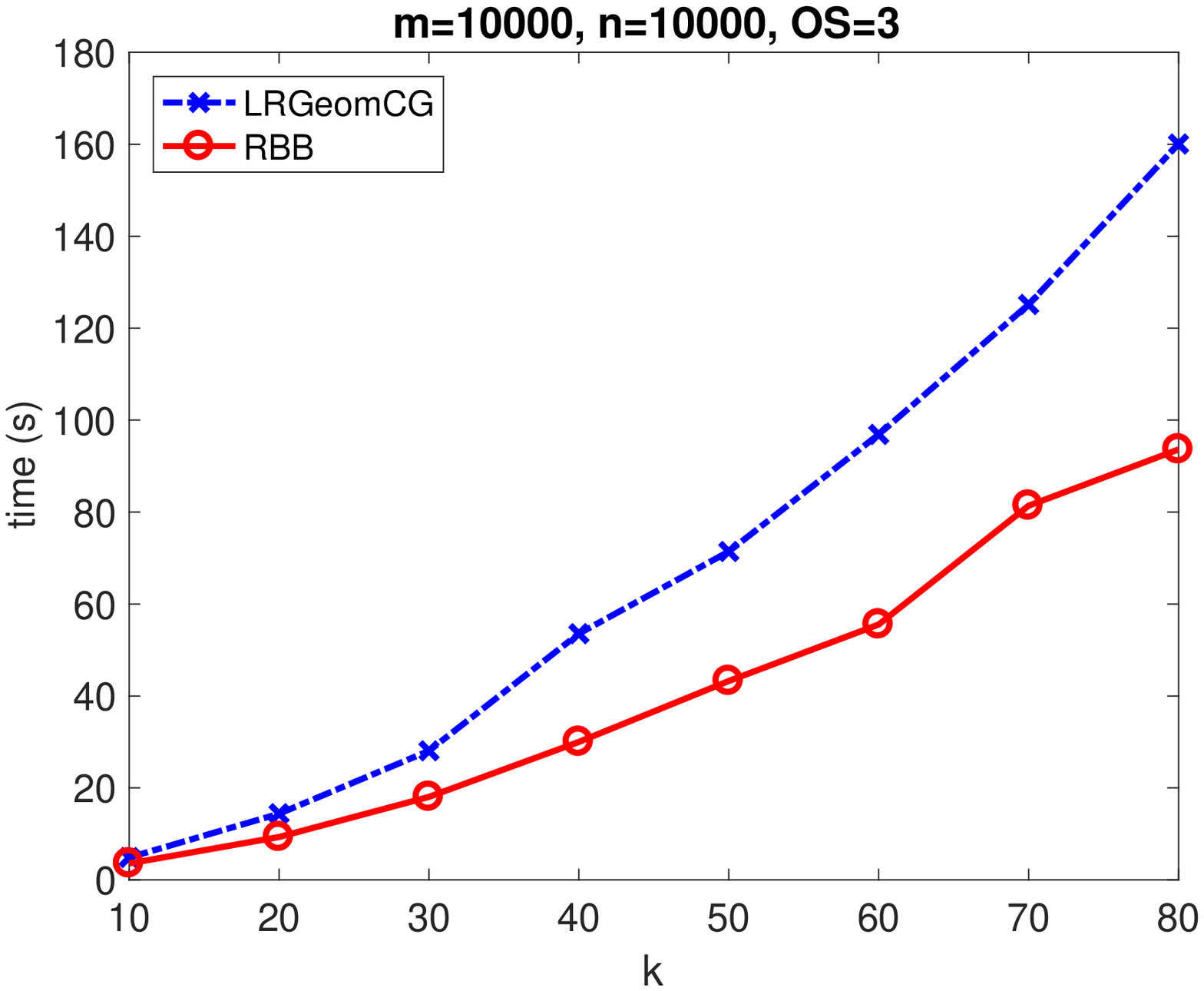}}
	\subfigure[Test on varying $\mathrm{OS}$]
	{\includegraphics[width=.325\textwidth]
		{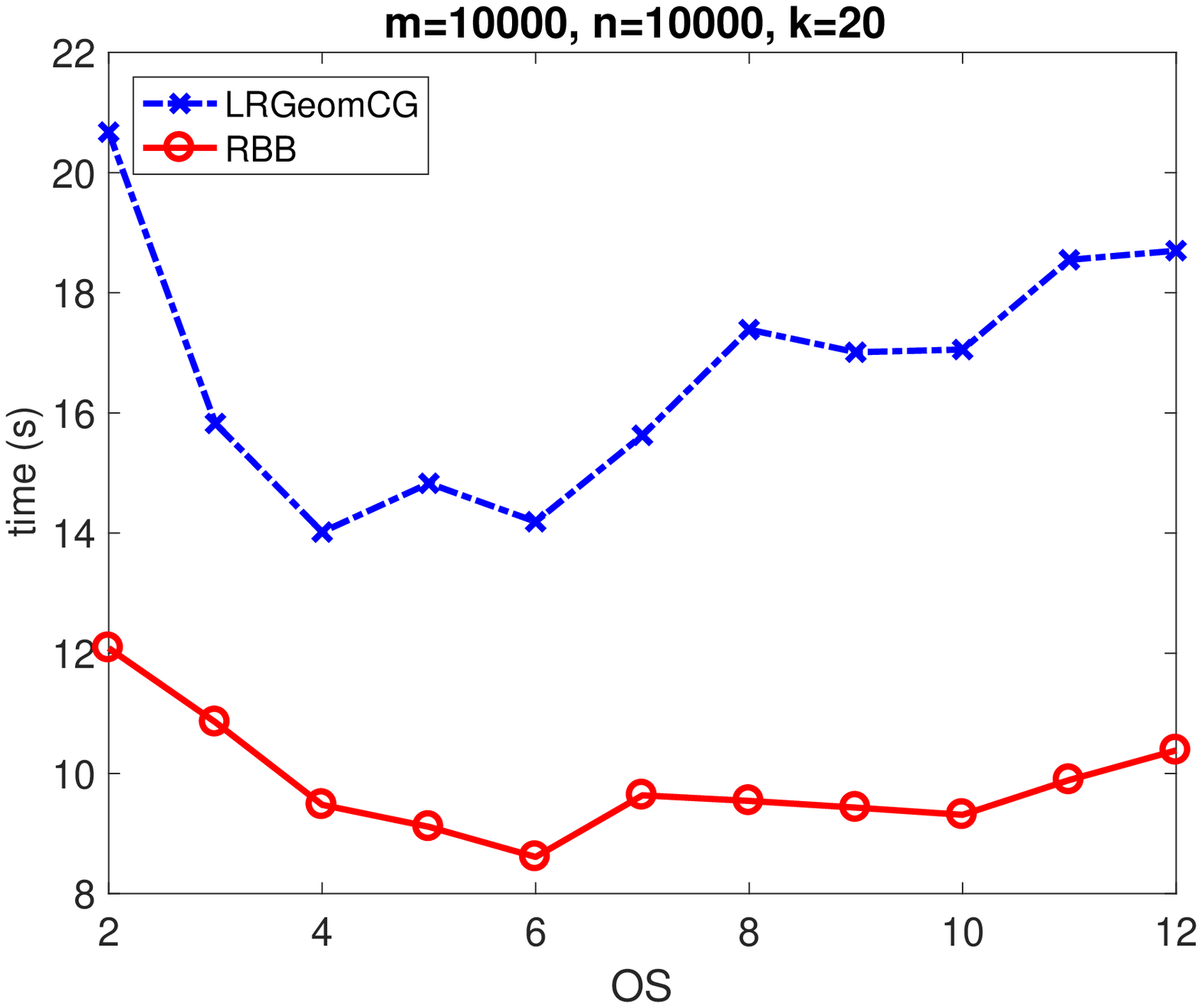}}
	\caption{A comparison on the fixed-rank optimization with varying $m$, $k$ and $\mathrm{OS}$}
	\label{fig:CGvsBB_nk}
\end{figure}

\subsection{Comparison on the rank reduction}\label{subsec:rank-reduction-numerical}
The effectiveness of the rank reduction step in RRAM is verified in this subsection. RRAM combines with the RBB method as the fixed-rank optimization, and we call it \texttt{RRAM-RBB}. For comparison, we also test LRGeomCG to illustrate that the rank-adaptive method is more suitable than fixed-rank methods for low-rank matrix completion. We generate problem~\eqref{prob:bounded-rank} with $m=n=1000$ and $\mathrm{OS}=3$. The data matrix $A=LR\zz$ is randomly generated by rank 10 matrices. The following comparison is twofold based on different initial guesses that have similar singular value distributions in Figure~\ref{fig:svd}. 

In a first set of experiments, the methods are initialized by~\eqref{eq:initialization}, i.e., the best rank-$k$ approximation of $\po(A)$. Given the rank parameter $k>\rank(A)=10$, the distribution of this type of initial points is similar to the one in the first row of Figure~\ref{fig:svd}, which has a large gap of singular values. We make a test on different rank parameters $k$ chosen from the set $\{10,11,\dots,20\}$. The numerical results are presented in Figure~\ref{fig:RRAM-error}, and observations are summarized as follows.

\begin{figure}[h]
	\centering
	\subfigure[Relative gradient]
	{\includegraphics[width=.325\textwidth]
		{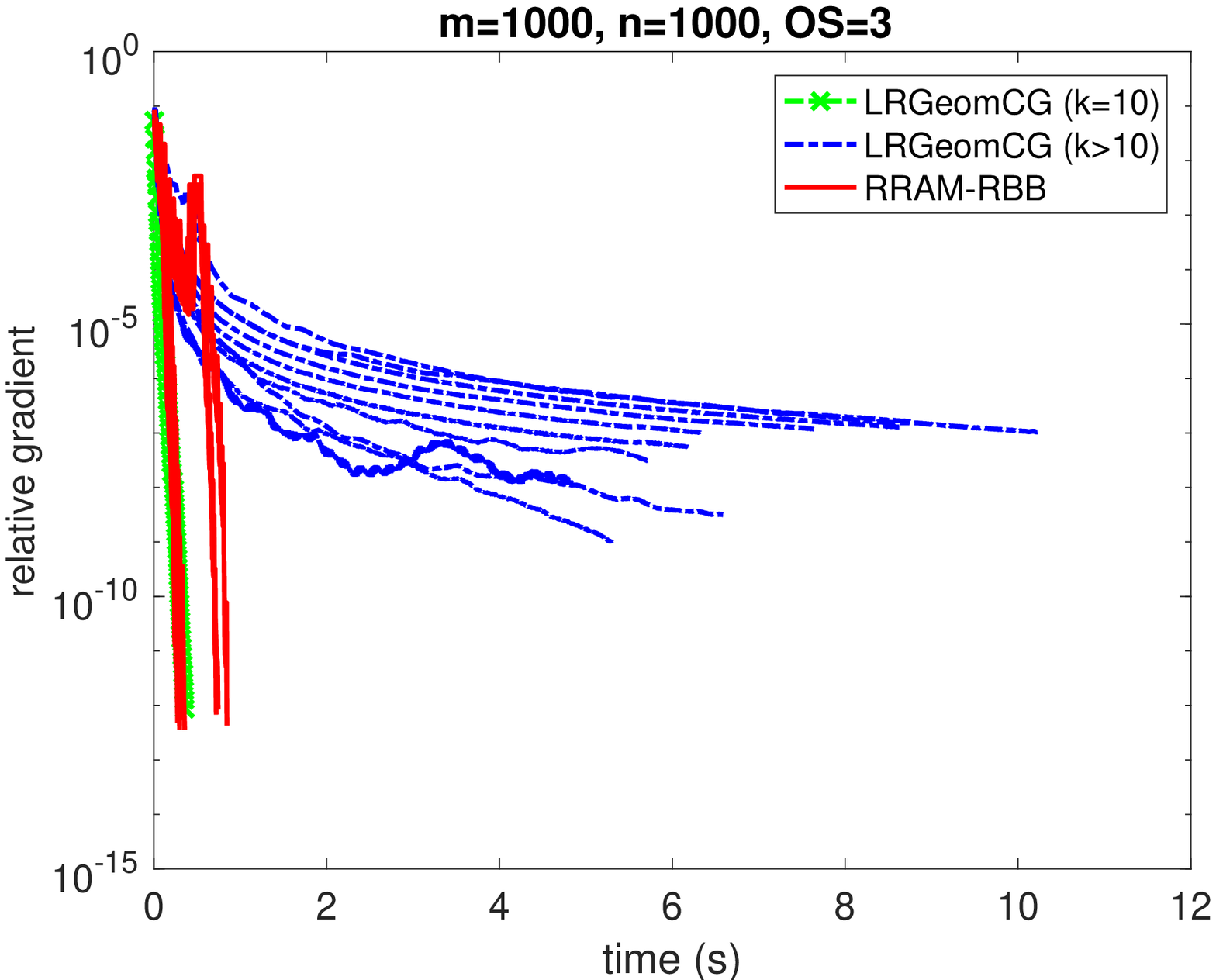}}
	\subfigure[Relative residual]
	{\includegraphics[width=.325\textwidth]
		{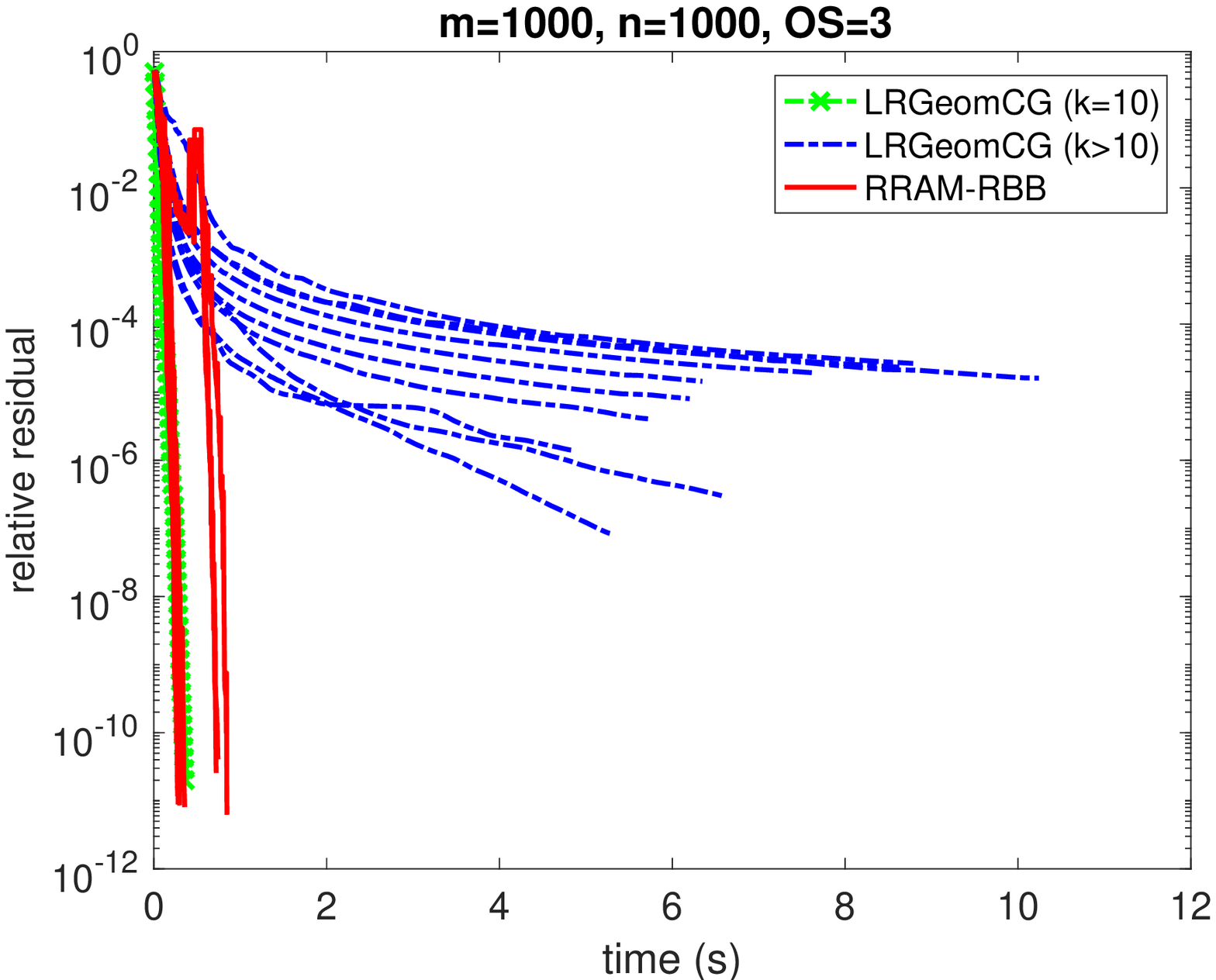}}
	\subfigure[Update rank]
	{\includegraphics[width=.31\textwidth]
		{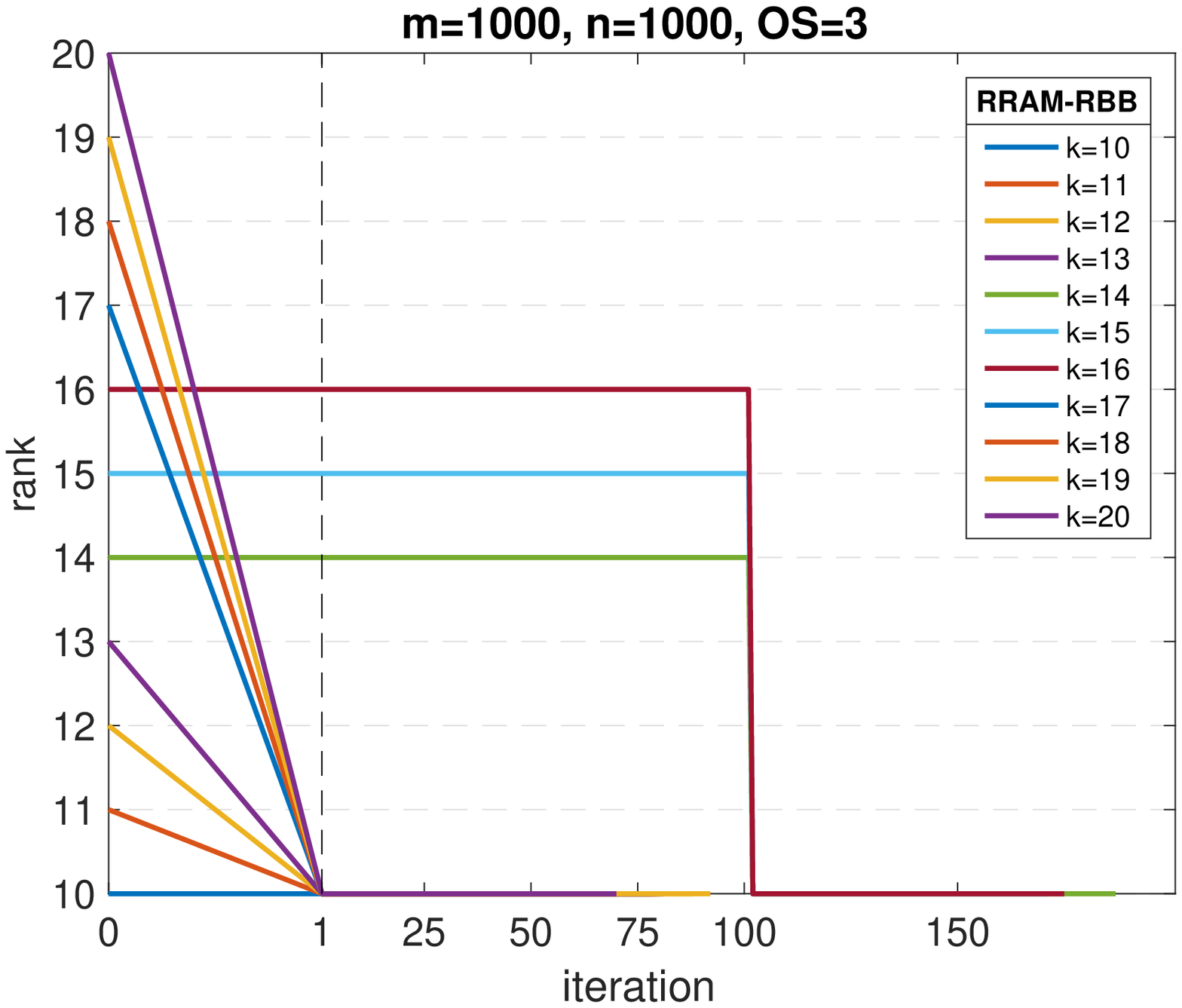}}
	\caption{A comparison with different rank parameters $k$. The initial point is generated by $X_0 = \projec_{\mathcal{M}_{k}}(\po(A))$.}
	\label{fig:RRAM-error}
\end{figure}

\begin{itemize}
	\item In Figure~\ref{fig:RRAM-error}(a)-(b), it is observed that for LRGeomCG, the best choice of $k$ is by far $k=10$, which is the true rank of data matrix $A$. It reveals that the performance of the fixed-rank optimization method LRGeomCG highly depends on the choice of rank parameter, while the proposed rank-adaptive method has comparable results among all choices.
	%	which means RRAM accepts more flexible choices of rank parameter.
	\item The update rank of RRAM is listed in Figure~\ref{fig:RRAM-error}(c). Notice that the rank reduction step is invoked in the initialization stage of Figure~\ref{fig:flowchart} for most choices of the initial rank, and it reduces the rank to 10. In the cases of $k=14,15,16$, although a initial rank reduction is not activated, the algorithm can detect the large gap of singular values when the first call of the fixed-rank method (Algorithm~\ref{alg:non-monotone gradient}) terminates (at its maximum iteration number 100) and reduces $k$ to the true rank~10.
	\item It is worth mentioning that in Figure~\ref{fig:RRAM-error}, when a rank reduction step completes, it often increases the function value at the very beginning, but the algorithm quickly converges once the true rank is detected.
\end{itemize}

Another class of initial points is randomly generated by a low-rank matrix $LR\zz$ that has rank~$k$. It has a uniform singular values distribution that is the same as the second row of Figure~\ref{fig:svd}. Similarly, we compare RRAM-RBB with LRGeomCG on the problems with different rank parameters, and the results are reported in Figure~\ref{fig:RRAM-error-random}. We observe that RRAM-RBB can reduce the rank among all choices of $k>10$ even when the singular values of {the} initial point do not have a large gap. Note that in the cases of $k=15$ and $18$, the first fixed-rank optimization stops with the iteration number less than $100$ since the relative change is achieved.

\begin{figure}[h]
	\centering
	\subfigure[Relative gradient]
	{\includegraphics[width=.325\textwidth]
		{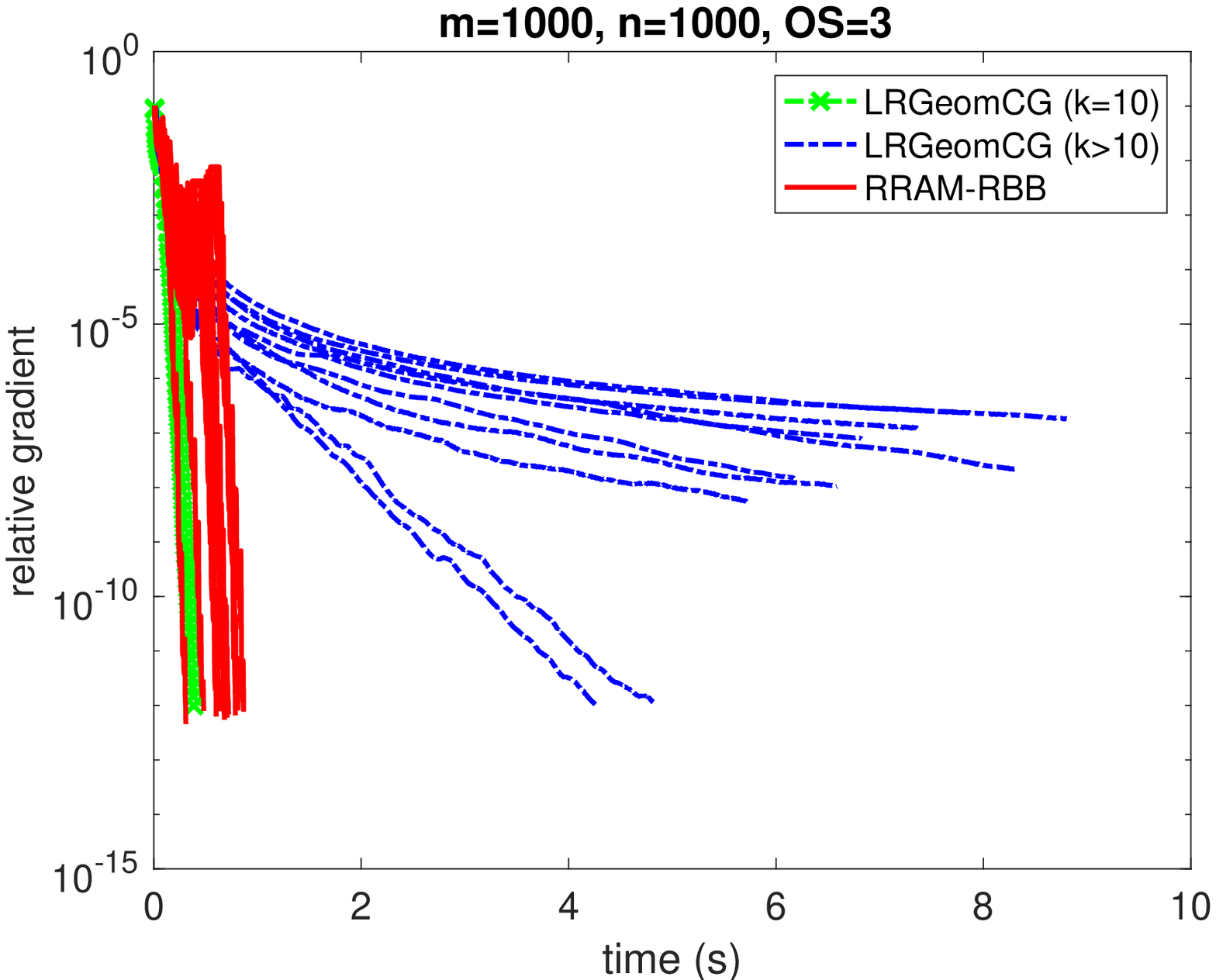}}
	\subfigure[Relative residual]
	{\includegraphics[width=.325\textwidth]
		{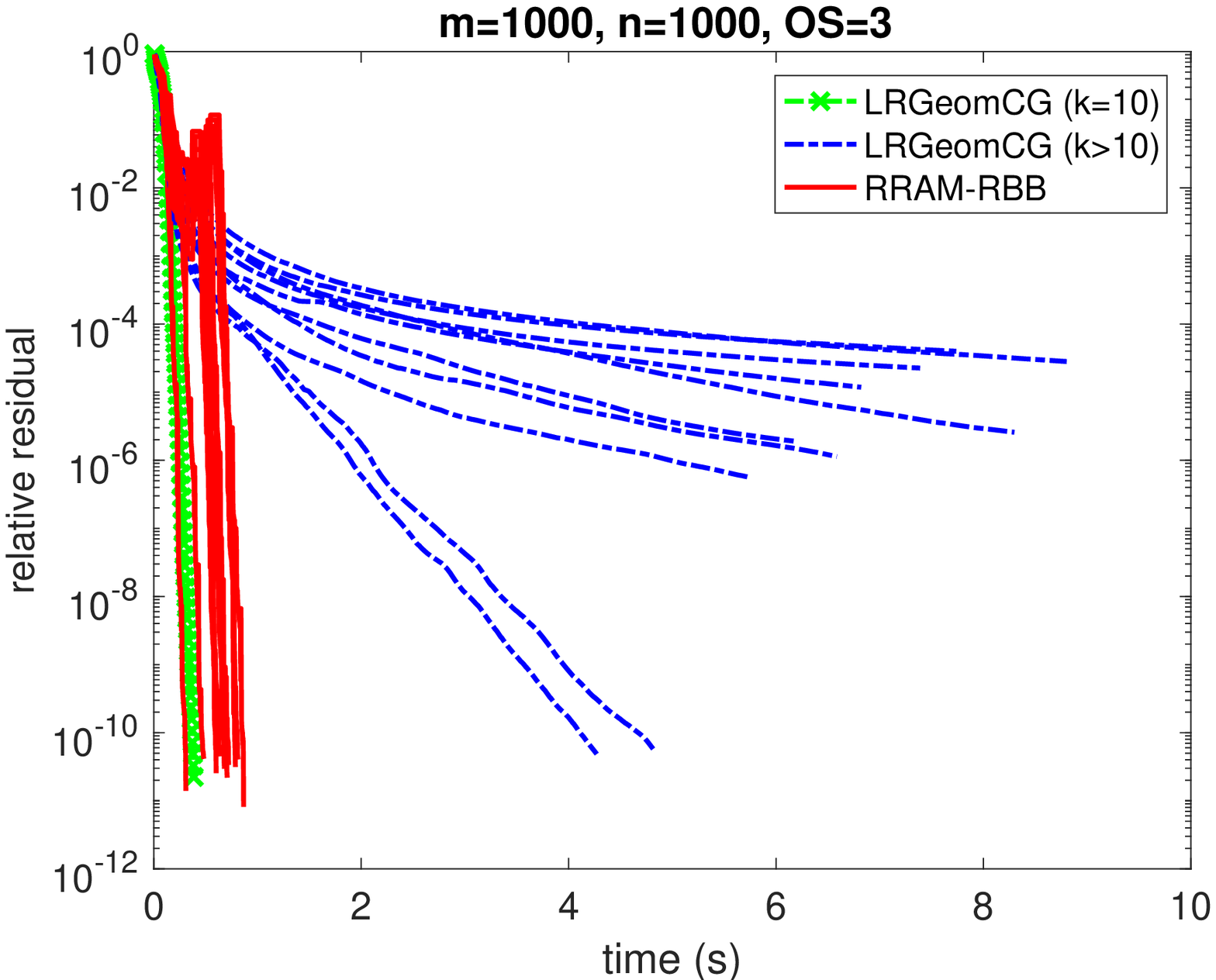}}
	\subfigure[Update rank]
	{\includegraphics[width=.31\textwidth]
		{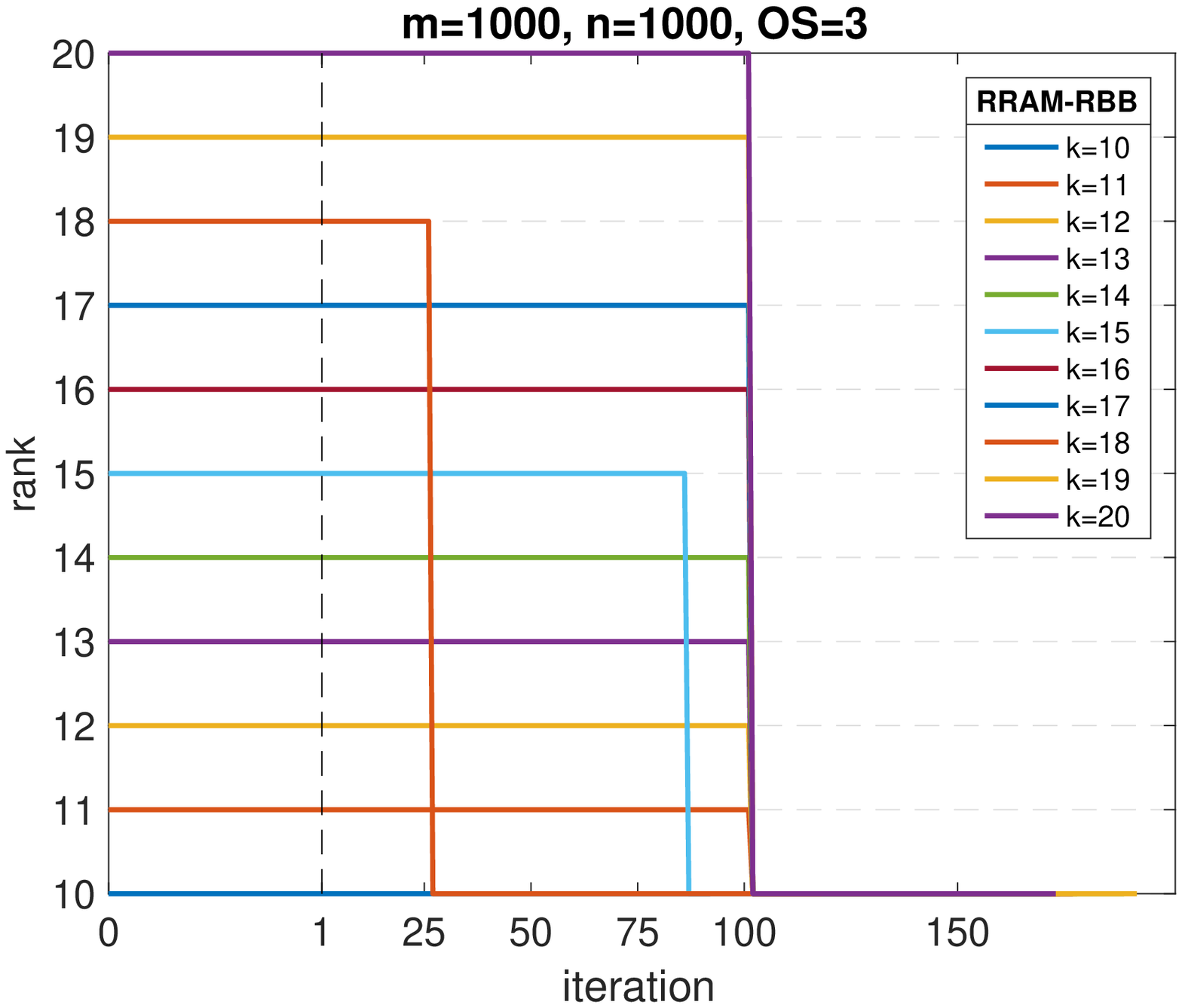}}
	\caption{A comparison with different rank parameters $k$. The initial point is randomly generated.}
	\label{fig:RRAM-error-random}
\end{figure}

%\begin{figure}[htpb]
%	\centering
%%	\subfigure[Update rank $s$ of RRAM-RBB]
%	{\includegraphics[width=.5\textwidth]
%		{test_RRAM_reduction_rank_iteration}}
%	\caption{Update rank of RRAM-RBB with different rank parameters $k$}
%	\label{fig:RRAM-rank}
%\end{figure}

%\begin{figure}[htpb]
%	\centering
%	\subfigure[Relative gradient]
%	{\includegraphics[width=.325\textwidth]
%		{test_RRAM_reduction_random_gradient_iteration}}
%	\subfigure[Relative residual]
%	{\includegraphics[width=.325\textwidth]
%		{test_RRAM_reduction_random_residual_iteration}}
%	\subfigure[Update rank]
%	{\includegraphics[width=.31\textwidth]
%		{test_RRAM_reduction_random_rank_iteration}}
%	\caption{A comparison with different rank parameters $k$ and random initial guess.}
%	\label{fig:RRAM-error-random}
%\end{figure}

%
%\begin{figure}[htpb]
%	\centering
%	%	\subfigure[Update rank $s$ of RRAM-RBB]
%	{\includegraphics[width=.5\textwidth]
%		{test_RRAM_reduction_random_rank_iteration}}
%	\caption{Update rank of RRAM-RBB with different rank parameters $k$ and random initial guess}
%	\label{fig:RRAM-rank-random}
%\end{figure}

\subsection{Comparison on the rank increase}\label{subsec:rank-increase-numerical}
In this subsection, we consider a class of problems {for which} the data matrix $A$ is ill-conditioned. This type of problem has been numerically studied in~\cite{Uschmajew_V:2015}. Specifically, we construct $$A=U\diag(1,10^{-1},\dots,10^{-{r+1}})V\zz,$$
where $U\in\R^{m\times r}$ and $V\in\R^{n\times r}$. Note that $A$ has exponentially decaying singular values. We generate the problem with $m=n=1000$, $k=r=20$ and $\mathrm{OS}=3$. The initial point is generated by~\eqref{eq:initialization}. We choose the rank increase parameter $\epsilon=2$ such that RRAM-RBB is prone to increase the rank. The tolerance parameter $\epsilon_g$ is set to $10^{-15}$.

We test on three different settings: (I) the maximum iteration number $j_{\max}$ for the fixed-rank optimization is set to 5, and the rank increase number $l=1$; (II)~$j_{\max}=100$ and $l=1$;  (III) $j_{\max}=20$ and $l=2$. Figure~\ref{fig:RRAM-increase} reports the evolution of errors and the update rank of RRAM-RBB. The observations are as follows.
\begin{itemize}
	\item In this ill-conditioned problem, RRAM-RBB performs better than the fixed-rank optimization method LRGeomCG ($k=20$). In addition, we observe that the rank reduction step is invoked at the initial point for three settings, and RRAM-RBB increases the rank by a number $l$ after each fixed-rank optimization.
	\item Note that the oscillation  of relative gradient in RRAM-RBB stems from the rank increase step. From the first two columns of Figure~\ref{fig:RRAM-increase}, it is observed that if the fixed-rank problem is inexactly solved ($j_{\max}=5$), the performance of RRAM-RBB is still comparable with the ``exactly" solved algorithm ($j_{\max}=100$). 
	%	We also observe that the algorithms with rank increase by $l=1$ achieves higher accuracy than one with $l=2$.
\end{itemize}

\begin{figure}[h]
	\centering
	{\includegraphics[width=.325\textwidth]
		{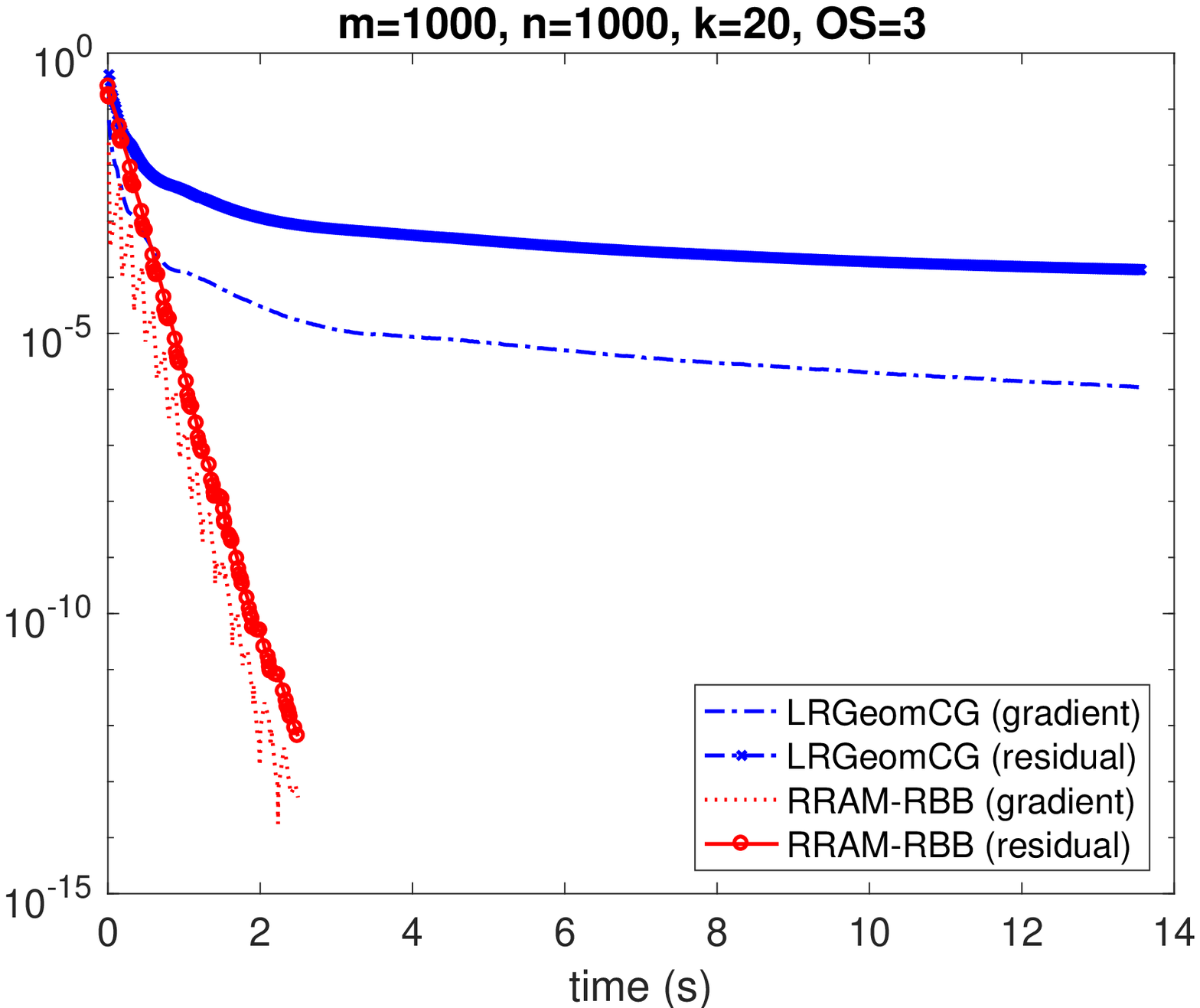}}
	{\includegraphics[width=.325\textwidth]
		{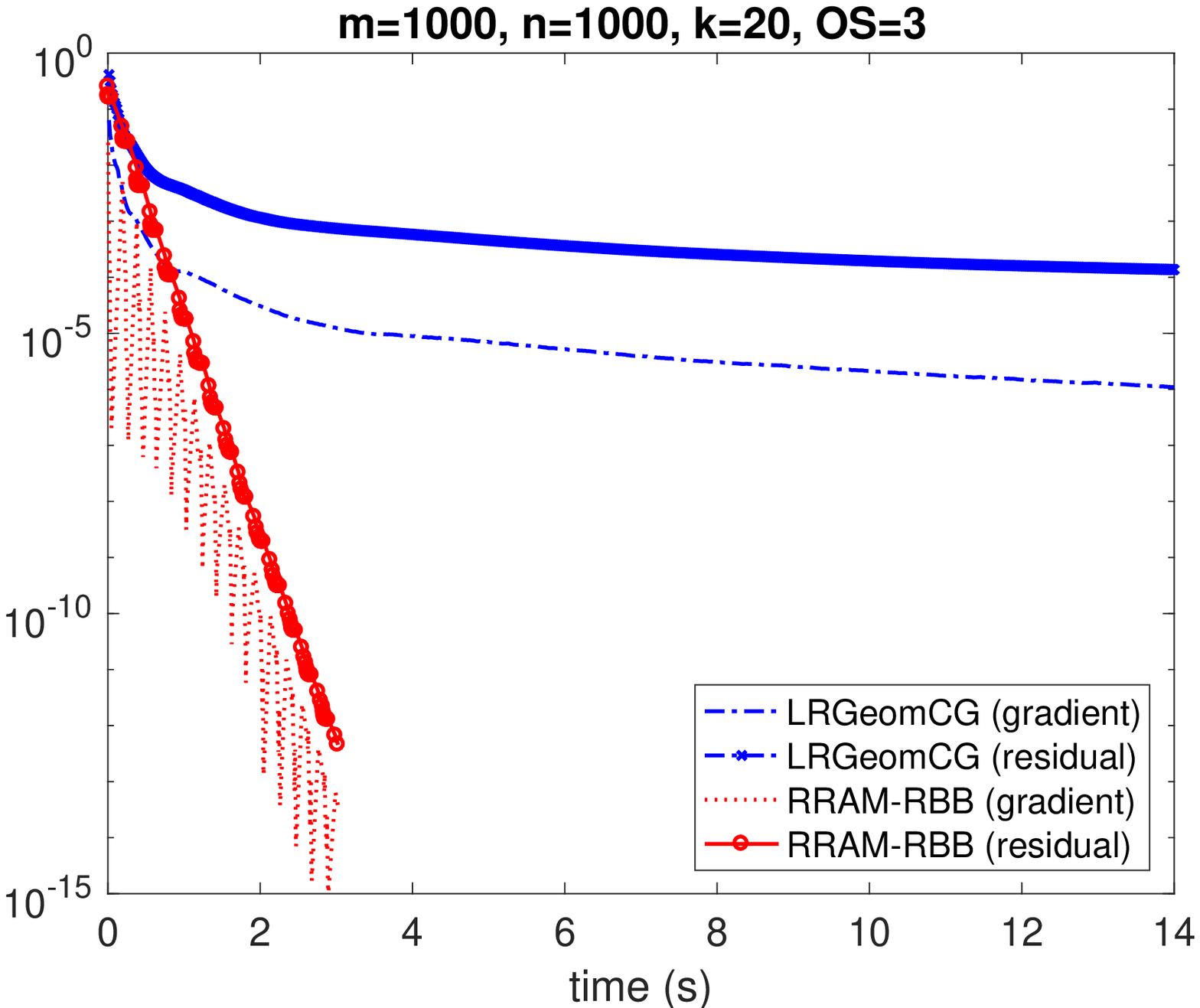}}	
	{\includegraphics[width=.325\textwidth]
		{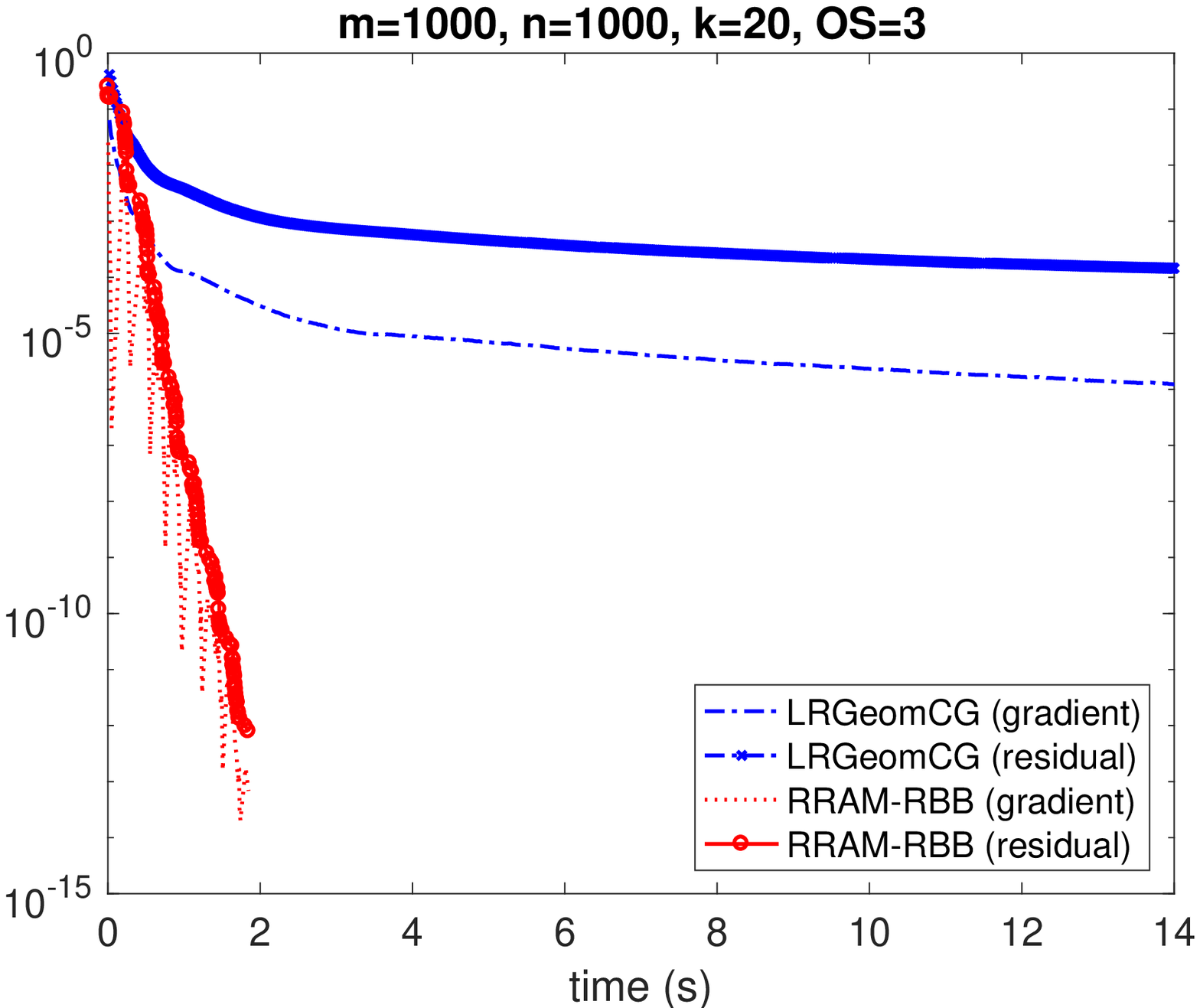}}
	\subfigure[$j_{\max}=5$ and $l=1$]
	{\includegraphics[width=.325\textwidth]
		{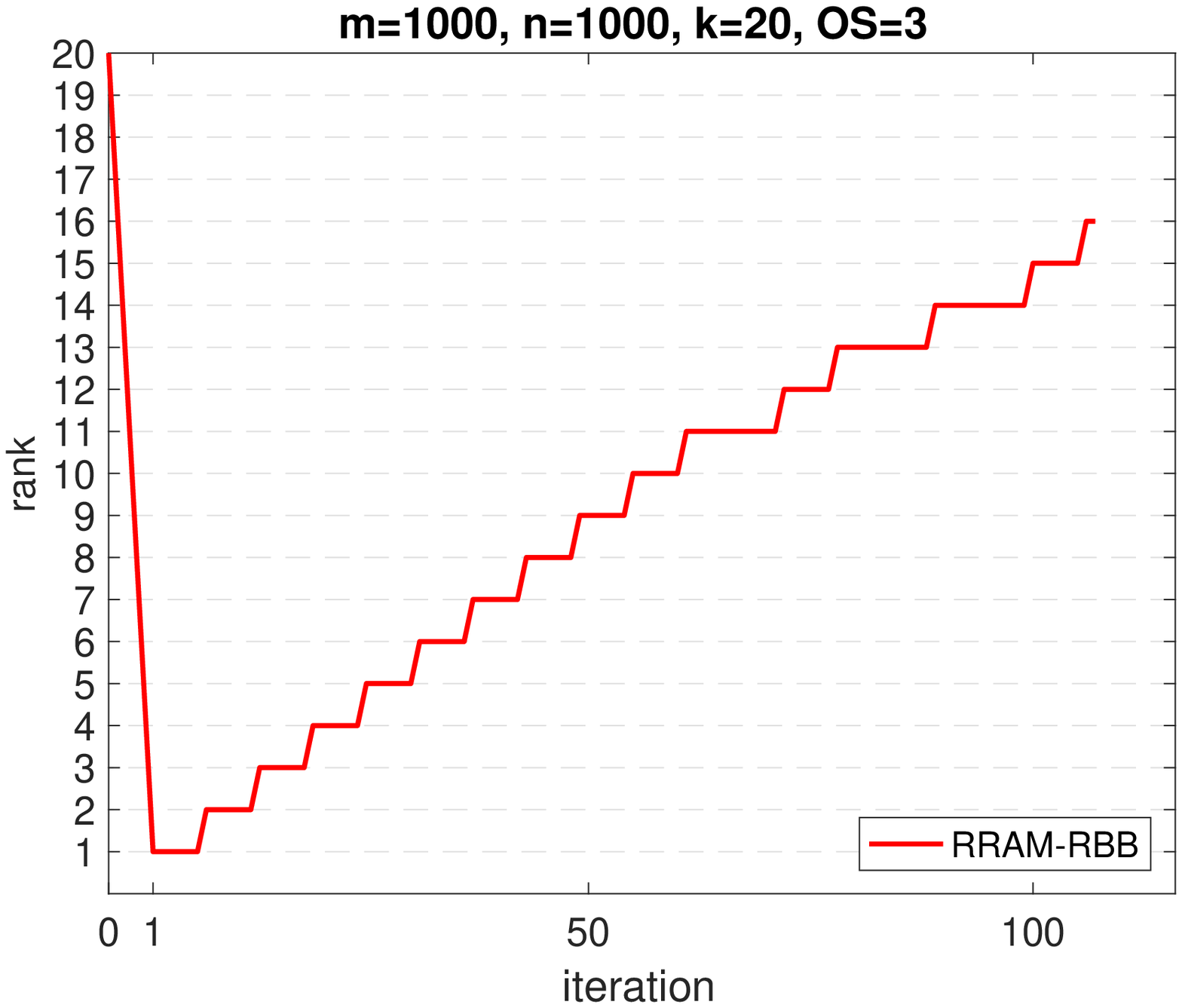}}
	\subfigure[$j_{\max}=100$ and $l=1$]
	{\includegraphics[width=.325\textwidth]
		{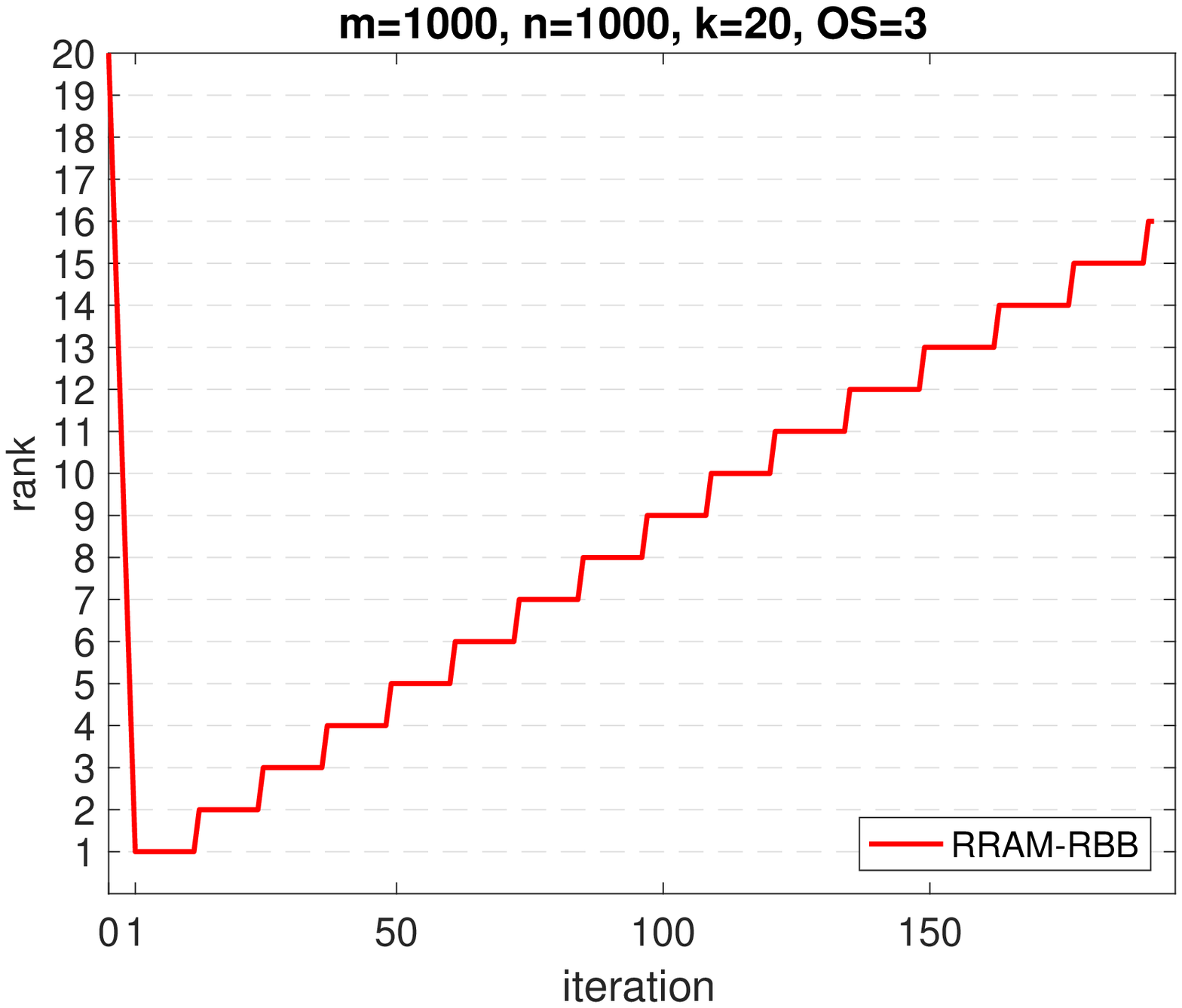}}
	\subfigure[$j_{\max}=20$ and $l=2$]
	{\includegraphics[width=.325\textwidth]
		{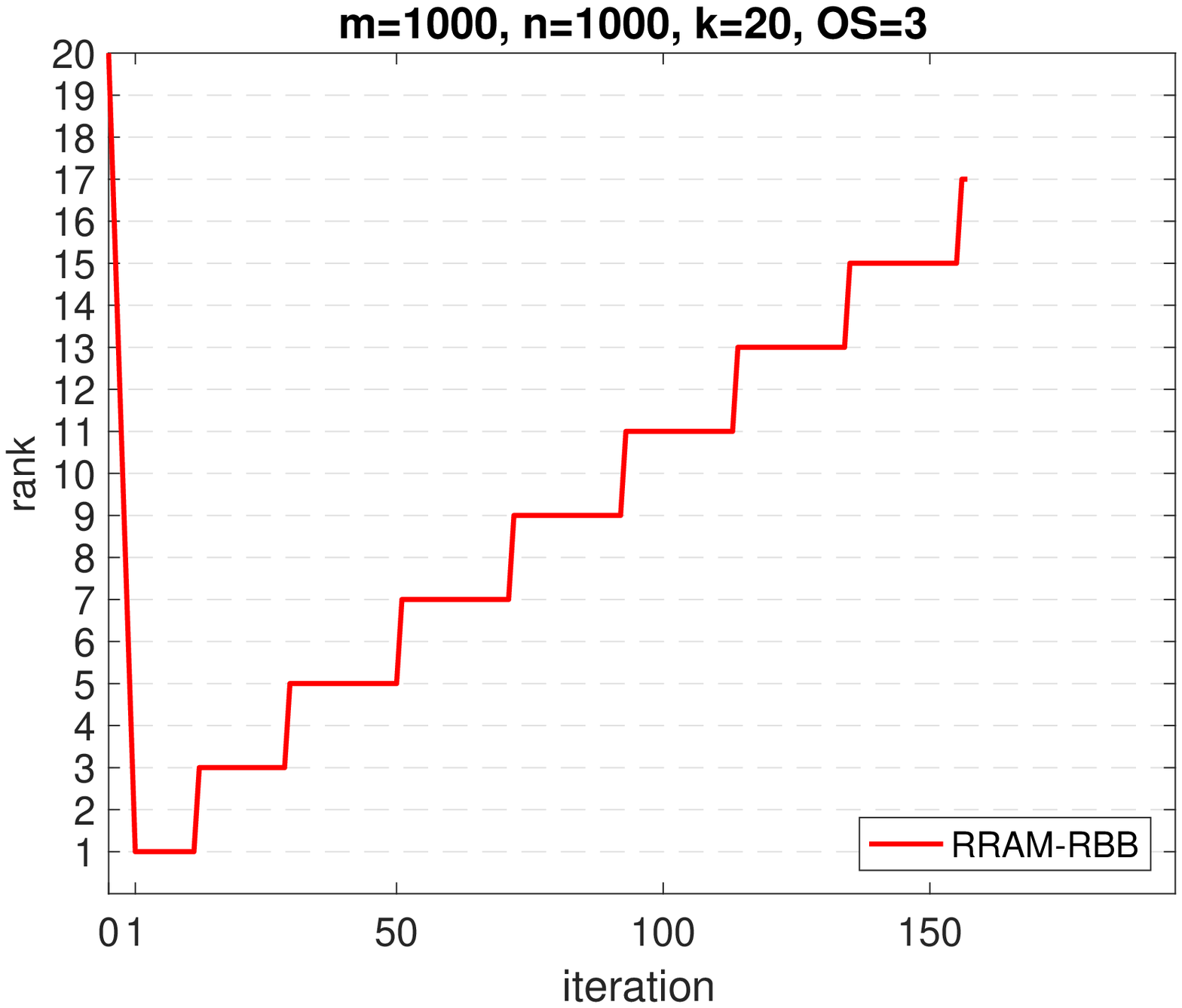}}
	\caption{A comparison on the rank increase with three settings. First row: evolution of errors. Second row: update rank.
		%		Left: the fixed rank iteration number is 5, and $l=1$. Middle: the fixed rank optimization is ``exactly" solved, and $l=1$. Right: the fixed rank iteration number is 20, $l=2$.
	} 
	\label{fig:RRAM-increase}
\end{figure}

%\begin{figure}[htpb]
%	\centering
%	\subfigure[Evolution]
%	{\includegraphics[width=.325\textwidth]
%		{test_RRAM_increase_1_time}}
%	\qquad\quad
%	\subfigure[Update rank]
%	{\includegraphics[width=.325\textwidth]
%		{test_RRAM_increase_1_rank_iteration}}
%	\caption{A comparison on the rank increase with $l=1$ and maximum inner iteration number $5$}
%	\label{fig:RRAM-increase-1}
%\end{figure}
%
%\begin{figure}[htpb]
%	\centering
%	\subfigure[Evolution]
%	{\includegraphics[width=.325\textwidth]
%		{test_RRAM_increase_1temp_time}}
%	\qquad\quad
%	\subfigure[Update rank]
%	{\includegraphics[width=.325\textwidth]
%		{test_RRAM_increase_1temp_rank_iteration}}
%	\caption{A comparison on the rank increase with $l=1$}
%	\label{fig:RRAM-increase-1-exact}
%\end{figure}
%
%\begin{figure}[htpb]
%	\centering
%	\subfigure[Evolution]
%	{\includegraphics[width=.325\textwidth]
%		{test_RRAM_increase_2_time}}
%	\qquad\quad
%	\subfigure[Update rank]
%	{\includegraphics[width=.325\textwidth]
%		{test_RRAM_increase_2_rank_iteration}}
%	\caption{A comparison on the rank increase with $l=2$ and maximum inner iteration number $20$}
%	\label{fig:RRAM-increase-2}
%\end{figure}

\subsection{Ablation comparison on the proposed rank-adaptive mechanism}\label{subsec:ablation-numerical}
In this subsection, 
%in order to verify the contribution of each component of the rank-adaptive framework, 
we produce an ablation study by incorporating the framework of RRAM (Figure~\ref{fig:flowchart}) into the fixed-rank optimization LRGeomCG~\cite{vandereycken2013low} (Riemannian CG method). The resulting algorithm is called \texttt{RRAM-RCG}. Note that RRAM-RCG and RRAM-RBB differ only for the inner iteration.

In the first test, we compare RRAM-RCG with RRAM-RBB on problem instances generated as in subsection~\ref{subsec:rank-reduction-numerical}, with the random initial guess described therein. Specifically, the problem is generated with $m=n=10000$, $k=15$, $r=10$ and $\mathrm{OS}=3$. For both fixed-rank methods (RBB and RCG), the maximum iteration number $j_{\max}$ is set to $100$. In Figure~\ref{fig:RRAM-ablation-reduction}, the numerical results illustrate that RCG still enjoys the benefit of the rank reduction step~\eqref{eq:gap_sv} that reduces the rank from 15 to the true rank 10. Moreover, it indicates that using RBB instead of RCG yields a considerable improvement on this problem instance. This observation can be explained by the comparison in subsection~\ref{subsec:fixed-rank-numerical}.

\begin{figure}[h]
	\centering
	\subfigure[Relative gradient and relative residual]
	{\includegraphics[width=.48\textwidth]
		{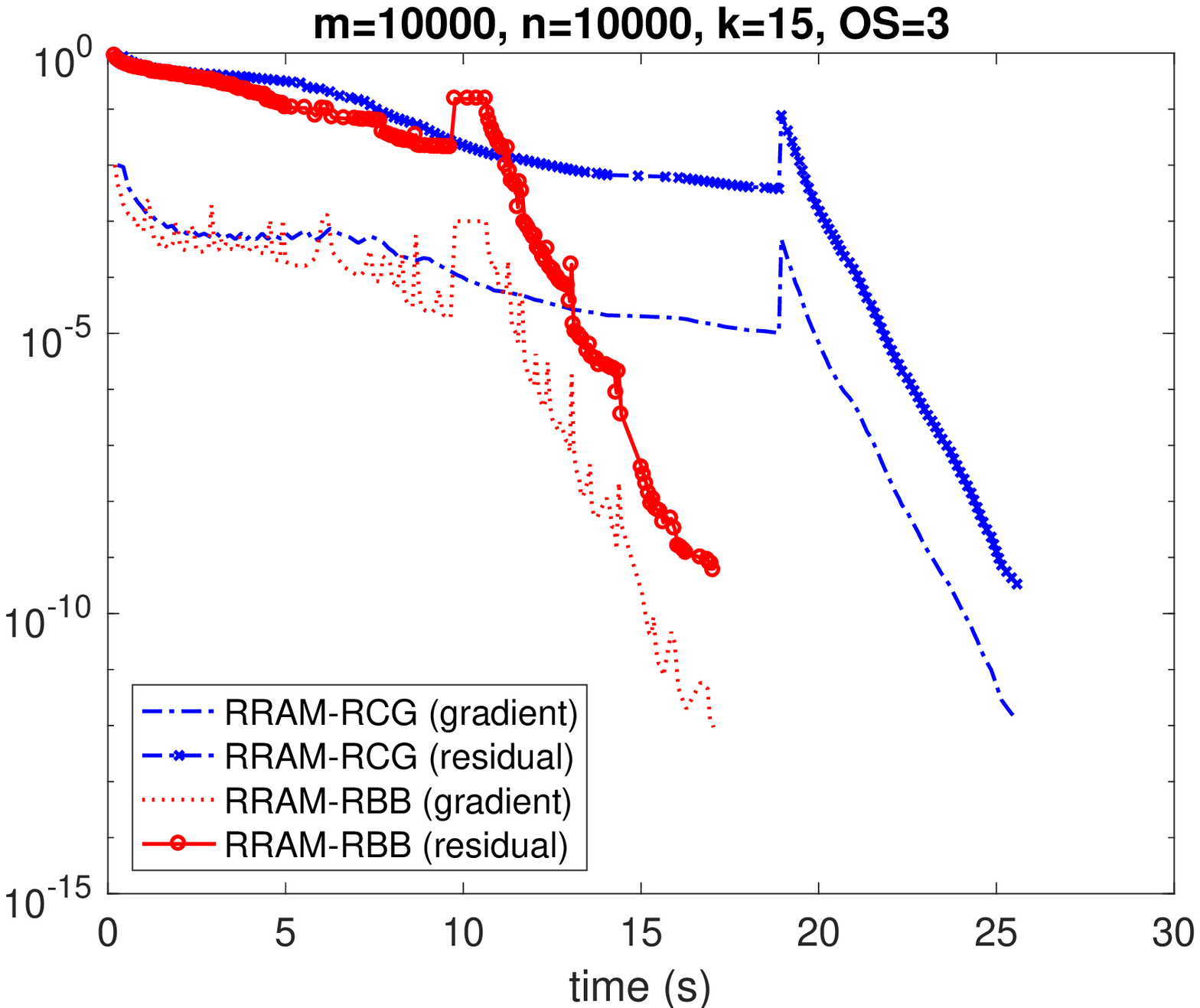}}
	\quad
	\subfigure[Update rank]
	{\includegraphics[width=.485\textwidth]
		{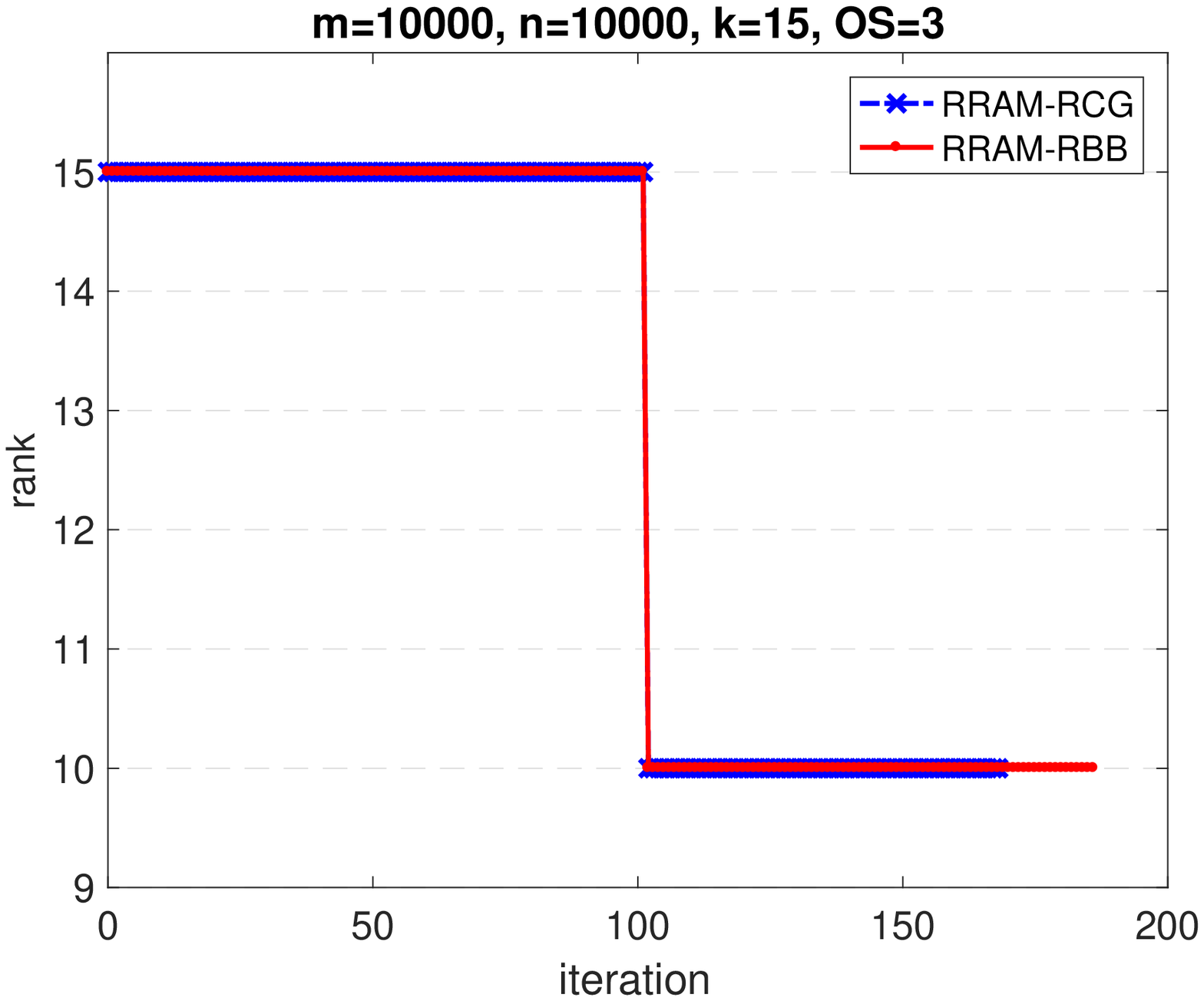}}
	\caption{An ablation comparison on the rank reduction.}
	\label{fig:RRAM-ablation-reduction}
\end{figure}

Another test is generated as in subsection~\ref{subsec:rank-increase-numerical} with $m=n=50000$, $k=r=20$ and $\mathrm{OS}=3$.  Figure~\ref{fig:RRAM-ablation} reports the performance comparison of RRAM-RBB and RRAM-RCG. It shows that the proposed rank increase strategy is also effective for RCG. Notice that the performance of RRAM-RBB is slightly better than RRAM-RCG in terms of time efficiency. 

\begin{figure}[h]
	\centering
	\subfigure[Relative gradient and relative residual]
	{\includegraphics[width=.48\textwidth]
		{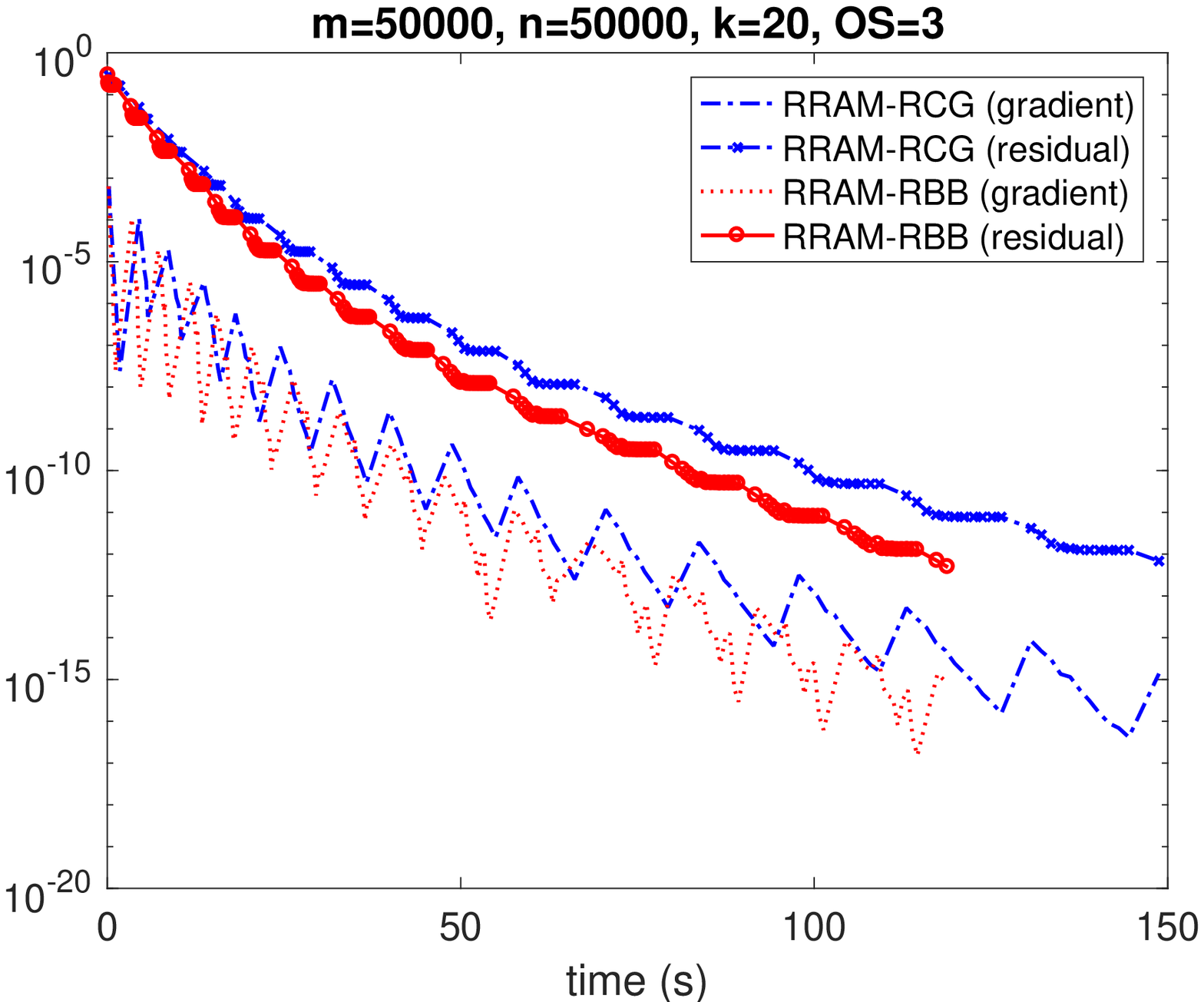}}
	\quad
	\subfigure[Update rank]
	{\includegraphics[width=.47\textwidth]
		{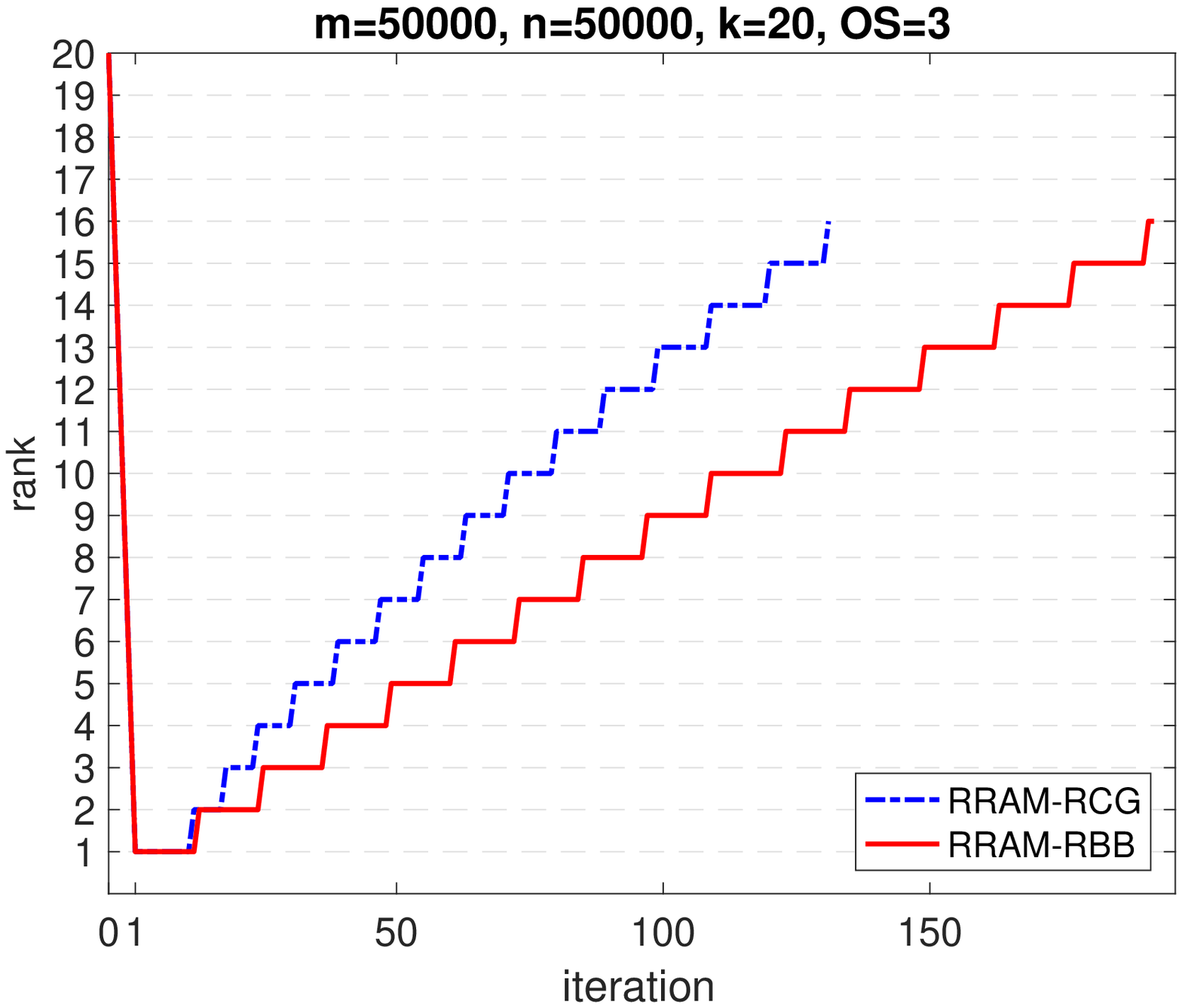}}
	\caption{An ablation comparison on the rank increase}
	\label{fig:RRAM-ablation}
\end{figure}
%Figure~\ref{fig:RRAM-ablation} reports the performance comparison of RRAM-RBB and RRAM-RCG. It shows that the proposed rank increase and rank reduction strategies are also proved effective for the fixed-rank optimization method RCG. Notice that the performance of RRAM-RBB is slightly better than RRAM-RCG in terms of time efficiency. This observation can be explained by the comparison in Figure~\ref{fig:CGvsBB} that RBB has a lower computational cost than which in RCG. From Figure~\ref{fig:RRAM-ablation}(b), we observe that RRAM-RCG is indeed a RCG method combined with rank-1 initial point and rank-1 increasing update. Therefore, RRAM-RCG will reduce to the method proposed in~\cite{Uschmajew_V:2015} since we ``exactly" solve the fixed-rank problem ($G_s(X)\approx0$), which leads its update along the tangent cone, $X_+ = X +\alpha G_{\le (s+l)}$, to the update along the normal space due to~\eqref{eq:critical-two}. In contrast with other rank-related algorithms, the proposed rank-adaptive method employs a simpler update to increase the rank and automatically reduce the rank as a pre-processing step.

\subsection{Test on real-world datasets}
In this subsection, we evaluate the performance of RRAM on low-rank matrix completion with real-world datasets. The {MovieLens}\footnote{Available from \url{https://grouplens.org/datasets/movielens/}.} dataset contains movie rating data from users on different movies. In the following experiments, we choose the dataset {MovieLens100K} that consists of {$100000$} ratings from 943 users on 1682 movies, and {MovieLens 1M} that consists of one million movie ratings from 6040 users on 3952 movies. 

For comparison, we test RRAM-RBB with several state-of-the-art methods that particularly target low-rank matrix completion, namely, LRGeomCG\footref{footnote:RCG} \cite{vandereycken2013low}, NIHT\footnote{\label{footnote:ASD}Available from \url{http://www.sdspeople.fudan.edu.cn/weike/publications.html}.} and CGIHT\footref{footnote:ASD} \cite{wei2016guarantees}, ASD\footref{footnote:ASD} and ScaledASD\footref{footnote:ASD} \cite{tanner2016low}. Note that all these methods are based on the fixed-rank problem where the rank parameter has to be given a priori. For these two real-world datasets, we randomly choose 80\% of the known ratings as the training set and the rest as the test set. %(sampling rate $\Omega/mn ≈ 5.05\%$ for MovieLens100K). 
The rank parameter $k$ is set to 10 for all tested algorithms, and we terminate these algorithms once the time budget 
% (10s for MovieLens100K, 70s for MovieLens 1M)
is reached or their own stopping criteria are achieved. 

\begin{figure}[htpb]
	\centering
	%	\subfigure[$\sigma_i$]
	{\includegraphics[width=.48\textwidth]
		{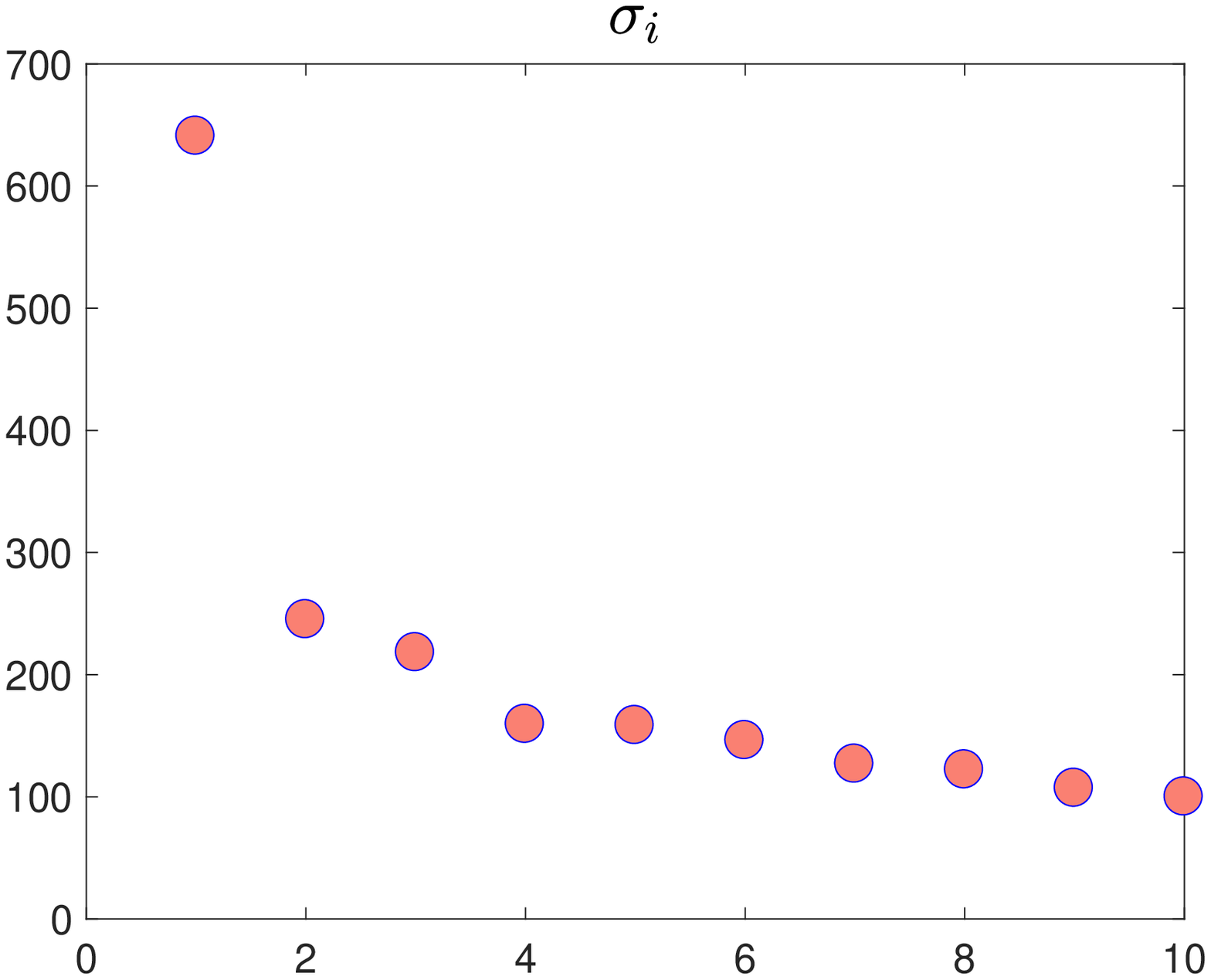}}
	\quad
	%	\subfigure[$\frac{\sigma_i - \sigma_{i+1}}{\sigma_i}$]
%	{\includegraphics[width=.48\textwidth]
%		{test_sv_movie100k_svsv1}}
	%	\subfigure[$\frac{\sigma_i - \sigma_{i+1}}{\sigma_1}$]
	{\includegraphics[width=.47\textwidth]
		{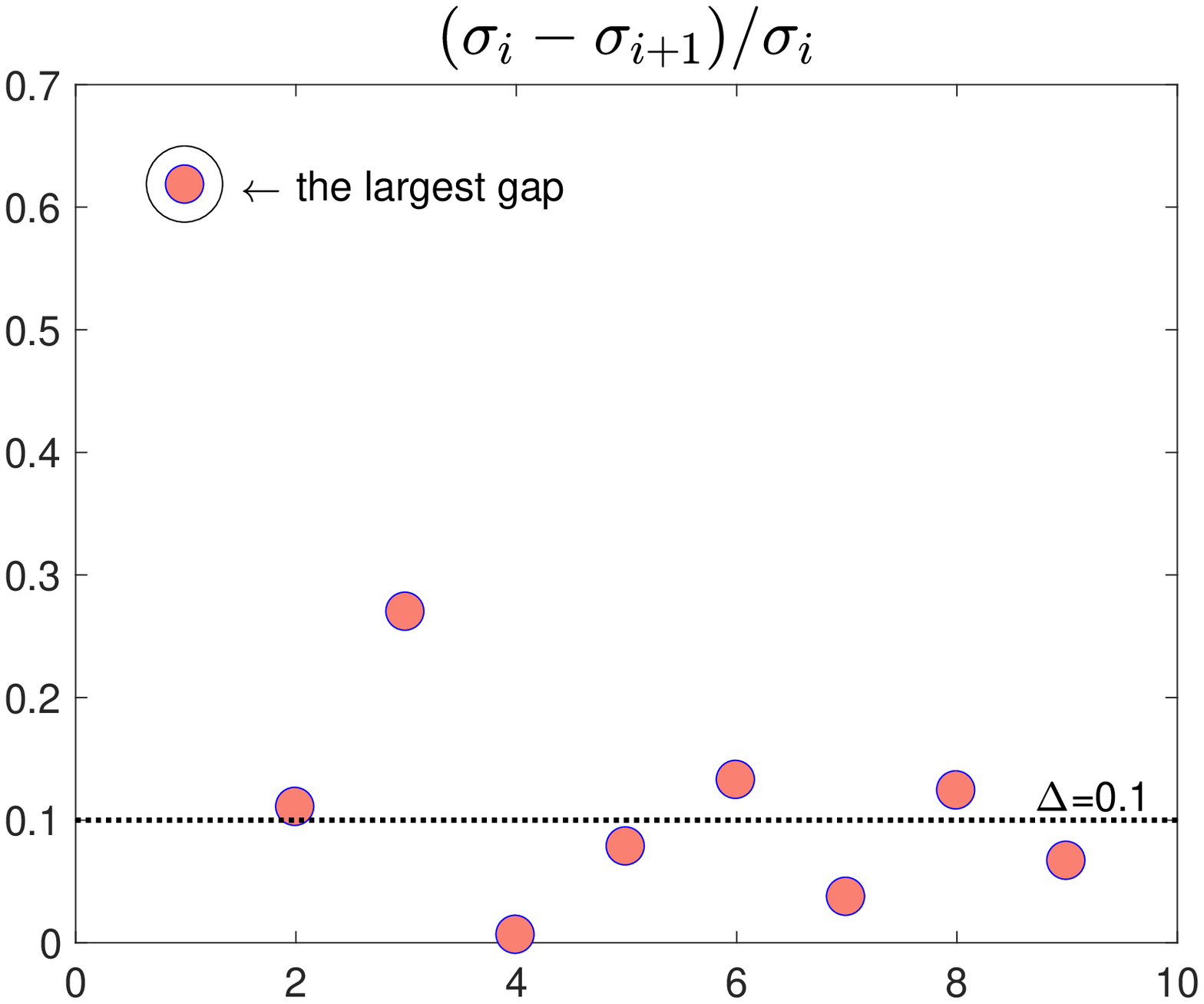}}
	
	\medskip
	{\includegraphics[width=.48\textwidth]
		{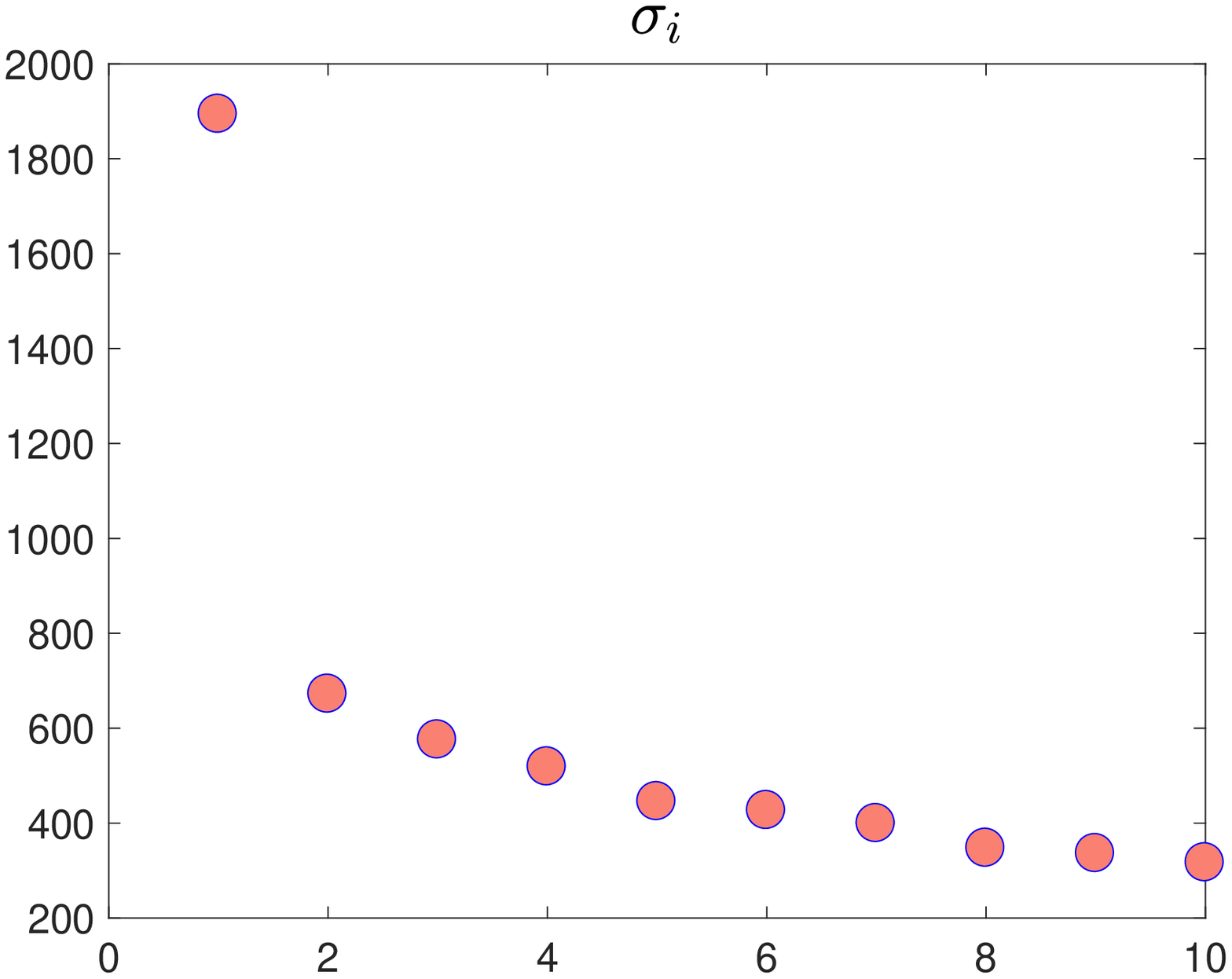}}
	\quad
	%	\subfigure[$\frac{\sigma_i - \sigma_{i+1}}{\sigma_i}$]
%	{\includegraphics[width=.48\textwidth]
%		{test_sv_movie1M_svsv1}}
	%	\subfigure[$\frac{\sigma_i - \sigma_{i+1}}{\sigma_1}$]
	{\includegraphics[width=.465\textwidth]
		{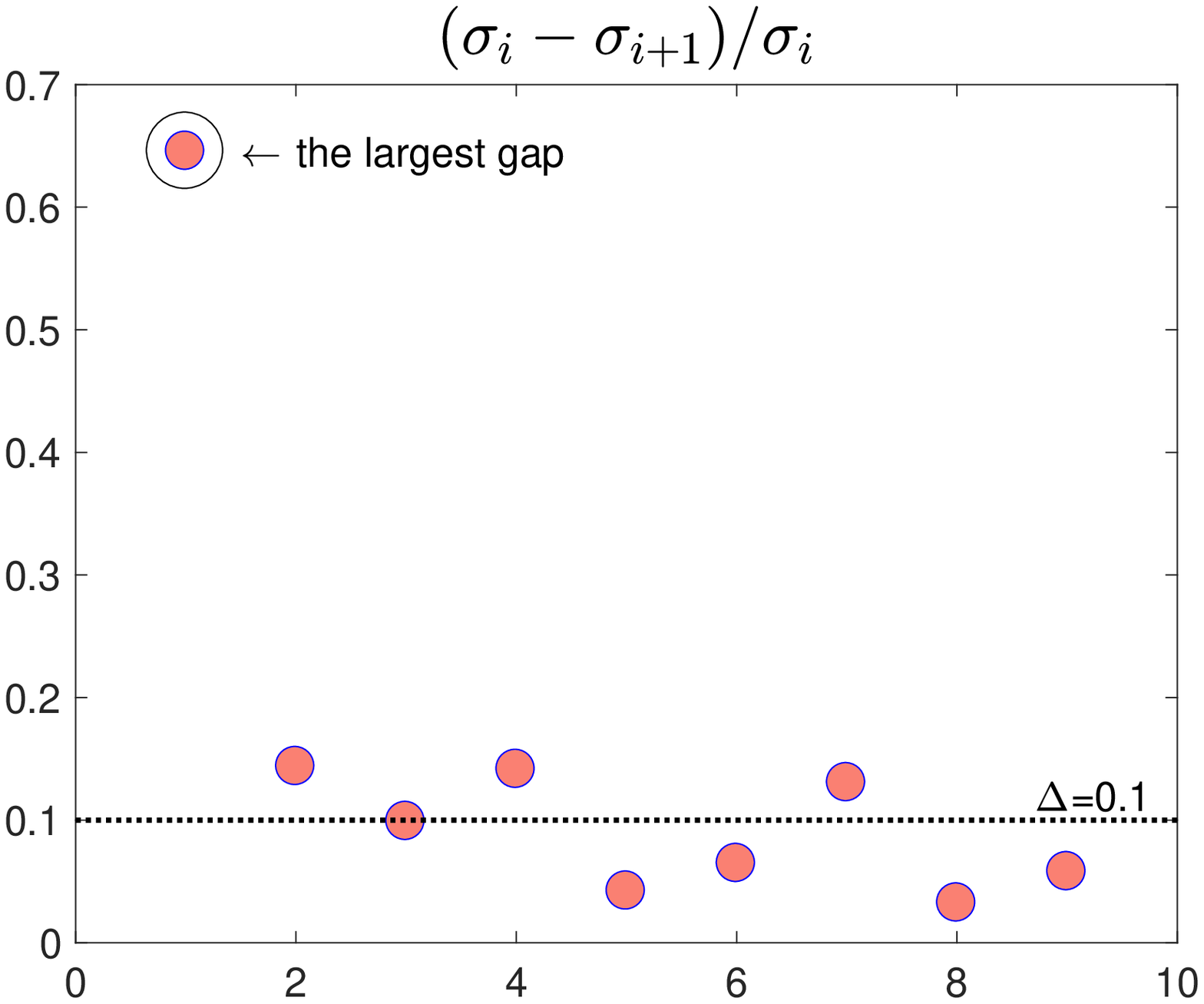}}
	\caption{Singular values of the initial points for the MovieLens dataset. First row: MovieLens100K. Second row: MovieLens 1M.  \label{fig:svd-movie}}
\end{figure}

Figure~\ref{fig:svd-movie} shows the singular values of the initial point~\eqref{eq:initialization}, namely the rank-10 approximation of the zero-filled MovieLens dataset. It is observed that the largest gap can be detected between the first two singular values by the rank reduction~\eqref{eq:gap_sv} for both examples. According to the rank-adaptive framework in Figure~\ref{fig:flowchart}, RRAM-RBB will thereby reduce the rank to one just after the initialization~\eqref{eq:initialization}, which explains the first rank reduction in the following figures for RRAM-RBB. 

The numerical results are illustrated in Figure~\ref{fig:RRAM-realdata}. Note that RRAM-RBB achieves the best final RMSE among all methods in the MovieLens100K dataset ($m=943$, $n=1682$), and is comparable with other algorithms in terms of time efficiency. The evolution of update rank of RRAM-RBB shows that
%shows that the initial large gap of  singular values is detected, and therefore the rank reduction is invoked. 
%In addition, 
RRAM-RBB adaptively increases the rank and automatically finds a rank that is lower than the rank given to the other methods but with a smaller RMSE. In the larger dataset MovieLens 1M ($m=6040$, $n=3952$), RRAM-RBB still has a comparable RMSE. In summary, the rank-adaptive method accepts the flexible choices of rank parameter, and is able to search for a suitable rank.

\begin{figure}[h]
	\centering
	%	\subfigure[RMSE (test)]
	{\includegraphics[width=.48\textwidth]
		{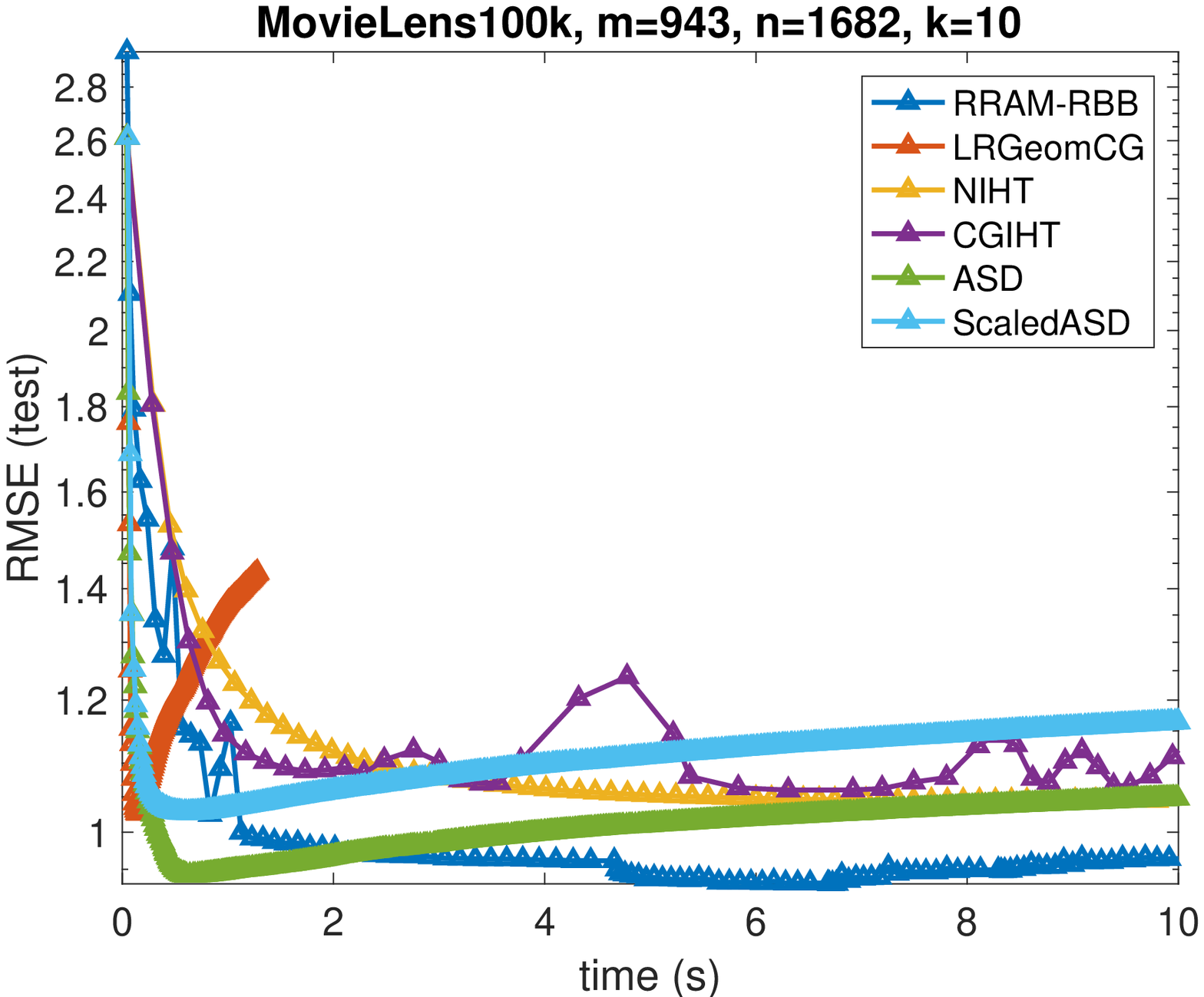}}
	\quad
	%	\subfigure[Update rank]
	{\includegraphics[width=.47\textwidth]
		{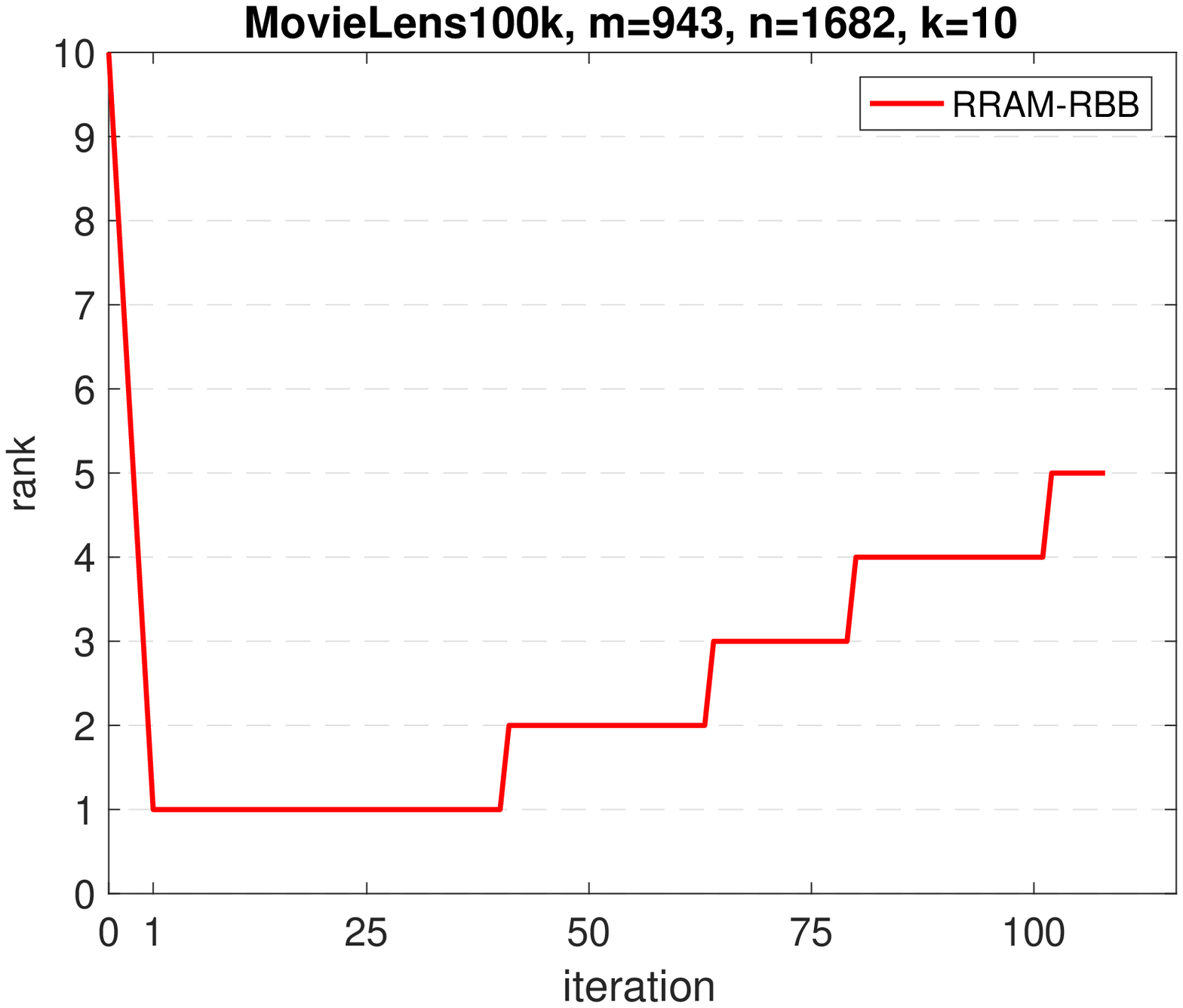}}
	\subfigure[RMSE (test)]
	{\includegraphics[width=.48\textwidth]
		{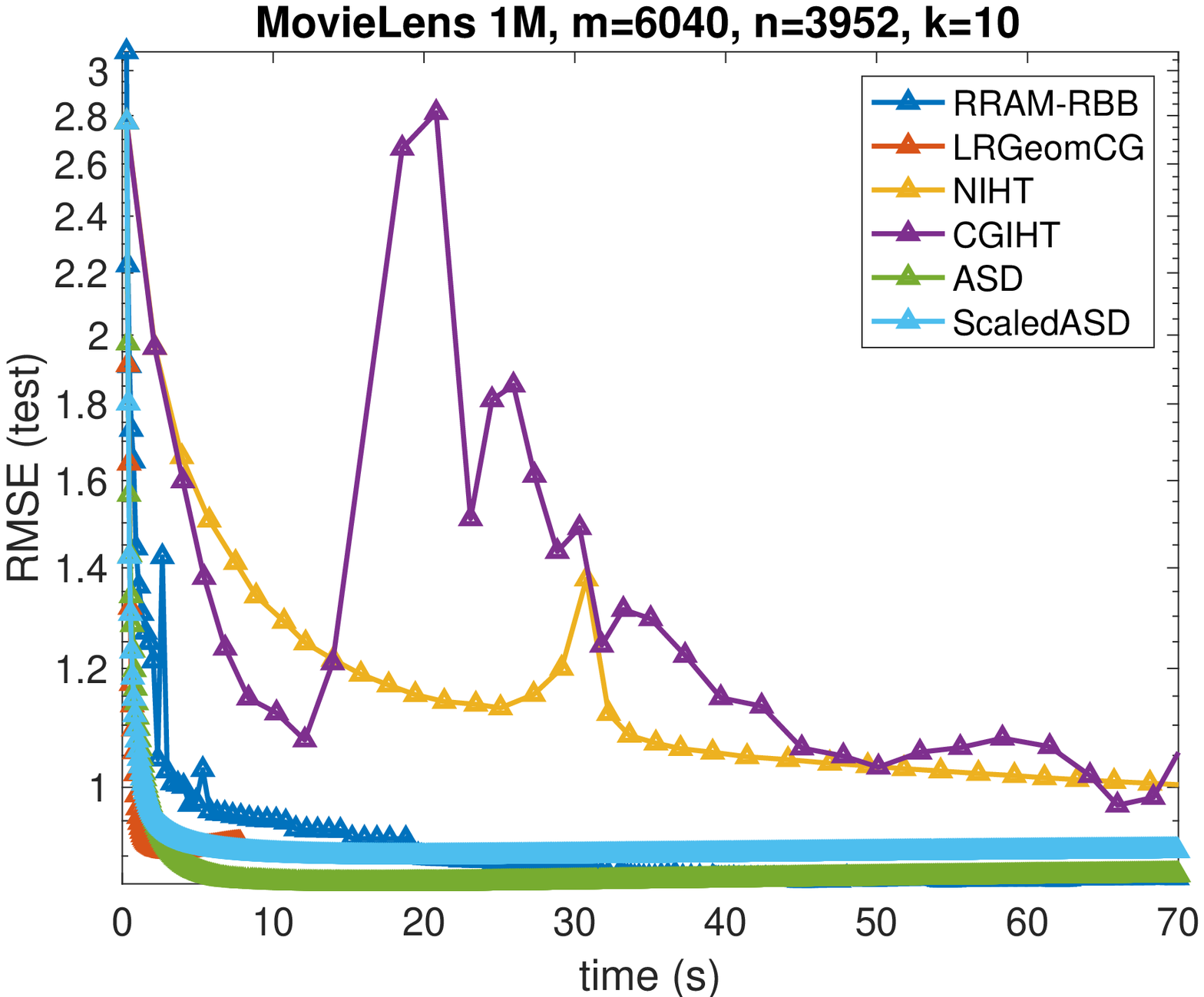}}
	\quad
	\subfigure[Update rank]
	{\includegraphics[width=.47\textwidth]
		{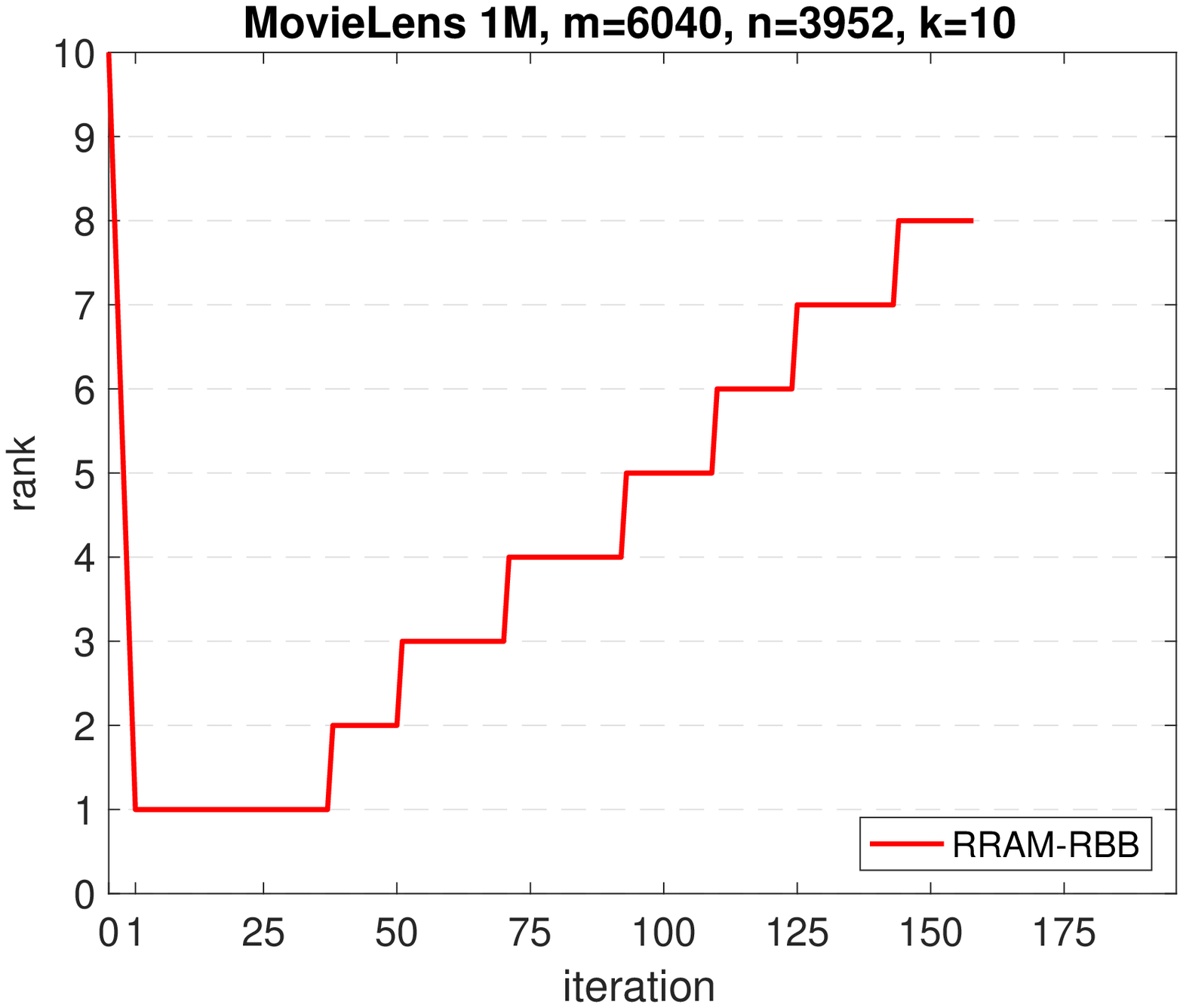}}
	\caption{A comparison on real-world datasets. First row: MovieLens100K. Second row: MovieLens 1M.}
	\label{fig:RRAM-realdata}
\end{figure}

\section{Conclusion}\label{sec:conclusion}
As the set of fixed-rank matrices is not closed, a rank-adaptive mechanism can be a promising way to solve optimization problems with rank constraints. This paper concerns the low-rank matrix completion problem that can be modeled with a bounded-rank constraint. 
A Riemannian rank-adaptive method is proposed, featuring rank increase and decrease mechanisms that are novel in ways discussed in Remark~\ref{remark:tab}.
%A Riemannian rank-adaptive method is proposed, and the rank increase step and rank reduction step are also constructed. 
Numerical comparisons on synthetic and real-world data show that the proposed rank-adaptive method compares favorably with other algorithms in low-rank matrix completion. 
This suggests that the proposed method might also perform well on other low-rank optimization problems, such as those mentioned in~\cite{schneider2015convergence,Zhou2016riemannian,Chi2019overview,Uschmajew2020}.
%\bgrevise{In addition, the numerical results illustrate that it is promising to apply these rank-adaptive strategies to other rank-related methods for the purpose of improving efficiency.}
% A future research direction is to investigate the convergence of the proposed method.

\begin{acknowledgements}
We would like to thank Bart Vandereycken for helpful discussions on the code ``LRGeomCG" and Shuyu Dong for generously providing his code of the comparison on real-world datasets.
\end{acknowledgements}

% Authors must disclose all relationships or interests that 
% could have direct or potential influence or impart bias on 
% the work: 
%
% \section*{Conflict of interest}
%
% The authors declare that they have no conflict of interest.

% BibTeX users please use one of
%\bibliographystyle{spbasic}      % basic style, author-year citations
%\bibliographystyle{spmpsci}      % mathematics and physical sciences
%\bibliographystyle{spphys}       % APS-like style for physics
%\bibliography{bibfile}

\begin{thebibliography}{10}
	\providecommand{\url}[1]{{#1}}
	\providecommand{\urlprefix}{URL }
	\expandafter\ifx\csname urlstyle\endcsname\relax
	\providecommand{\doi}[1]{DOI~\discretionary{}{}{}#1}\else
	\providecommand{\doi}{DOI~\discretionary{}{}{}\begingroup
		\urlstyle{rm}\Url}\fi
	
	\bibitem{absil2009optimization}
	Absil, P.-A., Mahony, R., Sepulchre, R.: Optimization Algorithms on Matrix
	Manifolds.
	\newblock Princeton University Press (2008).
	\newblock \urlprefix\url{https://press.princeton.edu/absil}
	
	\bibitem{BB}
	Barzilai, J., Borwein, J.M.: Two-point step size gradient methods.
	\newblock IMA J. Numer. Anal. \textbf{8}(1), 141--148 (1988).
	\newblock \doi{10.1093/imanum/8.1.141}
	
	\bibitem{BouAbsCar2018}
	Boumal, N., Absil, P.-A., Cartis, C.: Global rates of convergence for nonconvex
	optimization on manifolds.
	\newblock IMA J. Numer. Anal. \textbf{39}(1), 1--33 (2018).
	\newblock \doi{10.1093/imanum/drx080}

	\bibitem{Chi2019overview}
	{Chi}, Y., {Lu}, Y.M., {Chen}, Y.: Nonconvex optimization meets low-rank matrix
	factorization: An overview.
	\newblock IEEE Transactions on Signal Processing \textbf{67}(20), 5239--5269
	(2019).
	\newblock \doi{10.1109/TSP.2019.2937282}

	\bibitem{gao2020riemannian}
	Gao, B., Son, N.T., Absil, P.-A., Stykel, T.: {Riemannian} optimization on the
	symplectic {Stiefel} manifold.
	\newblock arXiv preprint arXiv:2006.15226  (2020)
	
	\bibitem{hu2019brief}
	Hu, J., Liu, X., Wen, Z.W., Yuan, Y.X.: A brief introduction to manifold
	optimization.
	\newblock J. Oper. Res. Soc. China  (2020).
	\newblock \doi{10.1007/s40305-020-00295-9}
	
	\bibitem{iannazzo2018riemannian}
	Iannazzo, B., Porcelli, M.: The {Riemannian} {Barzilai}--{Borwein} method with
	nonmonotone line search and the matrix geometric mean computation.
	\newblock IMA J. Numer. Anal. \textbf{38}(1), 495--517 (2018).
	\newblock \doi{10.1093/imanum/drx015}
	
	\bibitem{lee2003introduction}
	Lee, J.M.: Introduction to Smooth Manifolds.
	\newblock Graduate Texts in Mathematics. Springer (2003).
	\newblock \urlprefix\url{https://books.google.be/books?id=eqfgZtjQceYC}
	
	\bibitem{meyer2011linear}
	Meyer, G., Bonnabel, S., Sepulchre, R.: Linear regression under fixed-rank
	constraints: a {Riemannian} approach.
	\newblock In: Proceedings of the 28th international conference on machine
	learning (2011).
	\newblock \doi{10.5555/3104482.3104551}
	
	\bibitem{trace_penalty}
	Mishra, B., Gilles, M., Francis, B., Sepulchre, R.: Low-rank optimization with
	trace norm penalty.
	\newblock SIAM Journal on Optimization \textbf{23}(4), 2124--2149 (2013).
	\newblock \doi{10.1137/110859646}
	
	\bibitem{nguyen2019low}
	Nguyen, L.T., Kim, J., Shim, B.: Low-rank matrix completion: A contemporary
	survey.
	\newblock IEEE Access \textbf{7}, 94215--94237 (2019).
	\newblock \doi{10.1109/ACCESS.2019.2928130}
	
	\bibitem{schneider2015convergence}
	Schneider, R., Uschmajew, A.: Convergence results for projected line-search
	methods on varieties of low-rank matrices via {{\L}ojasiewicz} inequality.
	\newblock SIAM Journal on Optimization \textbf{25}(1), 622--646 (2015).
	\newblock \doi{10.1137/140957822}
	
	\bibitem{bigmatrixrecovery}
	Tan, M., Tsang, I.W., Wang, L., Vandereycken, B., Pan, S.J.: {Riemannian}
	pursuit for big matrix recovery.
	\newblock pp. 1539--1547. PMLR, Bejing, China (2014).
	\newblock \urlprefix\url{http://proceedings.mlr.press/v32/tan14.html}
	
	\bibitem{tanner2016low}
	Tanner, J., Wei, K.: Low rank matrix completion by alternating steepest descent
	methods.
	\newblock Applied and Computational Harmonic Analysis \textbf{40}(2), 417--429
	(2016).
	\newblock \doi{10.1016/j.acha.2015.08.003}
	
	\bibitem{Uschmajew_V:2015}
	Uschmajew, A., Vandereycken, B.: Greedy rank updates combined with {R}iemannian
	descent methods for low-rank optimization.
	\newblock In: 2015 International Conference on Sampling Theory and Applications
	(SampTA), pp. 420--424. IEEE (2015).
	\newblock \doi{10.1109/SAMPTA.2015.7148925}
	
	\bibitem{Uschmajew2020}
	Uschmajew, A., Vandereycken, B.: Geometric Methods on Low-Rank Matrix and
	Tensor Manifolds, pp. 261--313.
	\newblock Springer International Publishing, Cham (2020).
	\newblock \doi{10.1007/978-3-030-31351-7_9}
	
	\bibitem{vandereycken2013low}
	Vandereycken, B.: Low-rank matrix completion by {Riemannian} optimization.
	\newblock SIAM Journal on Optimization \textbf{23}(2), 1214--1236 (2013).
	\newblock \doi{10.1137/110845768}
	
	\bibitem{wei2016guarantees}
	Wei, K., Cai, J.F., Chan, T.F., Leung, S.: Guarantees of {Riemannian}
	optimization for low rank matrix recovery.
	\newblock SIAM Journal on Matrix Analysis and Applications \textbf{37}(3),
	1198--1222 (2016).
	\newblock \doi{10.1137/15M1050525}
	
	\bibitem{Zhou2016riemannian}
	Zhou, G., Huang, W., Gallivan, K.A., Van~Dooren, P., Absil, P.-A.: A
	{Riemannian} rank-adaptive method for low-rank optimization.
	\newblock Neurocomputing \textbf{192}, 72–80 (2016).
	\newblock \doi{10.1016/j.neucom.2016.02.030}
	
\end{thebibliography}

%% Non-BibTeX users please use
%\begin{thebibliography}{}
%%
%% and use \bibitem to create references. Consult the Instructions
%% for authors for reference list style.
%%
%\bibitem{RefJ}
%% Format for Journal Reference
%Author, Article title, Journal, Volume, page numbers (year)
%% Format for books
%\bibitem{RefB}
%Author, Book title, page numbers. Publisher, place (year)
%% etc
%\end{thebibliography}

\end{document}